\DeclareMathSizes{\@xpt}{\@xpt}{6}{5}																																	
\def\namedlabel#1#2{\begingroup
    #2%
    \def\@currentlabel{#2}%
    \phantomsection\label{#1}\endgroup
}
\theoremstyle{plain}
\newtheorem{theorem}{Theorem}[section]
\newtheorem{lemma}[theorem]{Lemma}
\newtheorem{proposition}[theorem]{Proposition}
\newtheorem{corollary}[theorem]{Corollary}
\newtheorem*{theorem*}{Theorem}
\theoremstyle{definition}
\newtheorem{example}[theorem]{Example}
\theoremstyle{remark}
\newtheorem{remark}[theorem]{Remark}
\newcommand{\K}{\Bbbk}
\newcommand{\rmod}[1]{{#1}_{\bullet}^{}} 
\newcommand{\rhopfmod}[1]{{#1}_\bullet^\bullet}
\newcommand{\rcomod}[1]{{#1}_{}^{\bullet}}
\newcommand{\cl}[1]{\overline{#1}} 
\newcommand{\coinv}[2]{{#1}{}^{\mathrm{co}{#2}}} 
\newcommand{\lcoinv}[2]{{{}^{\mathrm{co}{#2}}{#1}}}
\newcommand{\inv}[2]{{\overline{#1}{}^{#2}}}
\newcommand{\linv}[2]{{{}^{#2}\overline{#1}}}
\newcommand{\rdual}[1]{{#1}^*} 
\newcommand{\tensor}[1]{\otimes_{{#1}}} 
\newcommand{\what}[1]{\widehat{#1}} 
\newcommand{\cmptens}[1]{\,\what{\otimes}_{#1}\,} 
\newcommand{\tildetens}[1]{\,\widetilde{\otimes}_{#1}\,}
\newcommand{\bartens}[1]{\,\overline{\otimes}_{#1}\,}
\renewcommand{\ker}{\mathsf{ker}} 
\newcommand{\id}{\mathsf{Id}} 
\newcommand{\Hom}[6]{{_{#1}^{#2}\mathsf{Hom}_{#3}^{#4}}\left({#5},{#6}\right)} 
\newcommand{\Homk}{\mathsf{Hom}} 
\newcommand{\End}[2]{\mathsf{End}_{#1}(#2)} 
\newcommand{\quasihopf}[1]{{}_{#1}^{\phantom{#1}}\M{}_{#1}^{#1}} 
\newcommand{\rhopf}[1]{\M_{#1}^{#1}}
\newcommand{\lhopf}[1]{{_{#1}^{#1}\M}}
\newcommand{\Rmod}[1]{\M^{}_{#1}} 
\newcommand{\Lmod}[1]{{}_{#1}^{}\M} 
\newcommand{\Rcomod}[1]{\M_{}^{#1}}
\newcommand{\cC}{{\mathcal C}}
\newcommand{\cD}{{\mathcal D}}
\newcommand{\cF}{{\mathcal F}}
\newcommand{\cG}{{\mathcal G}}
\newcommand{\cK}{{\mathcal K}}
\newcommand{\cL}{{\mathcal L}}
\newcommand{\cR}{{\mathcal R}}
\newcommand{\cU}{{\mathcal U}}
\newcommand{\M}{\mathfrak{M}} 
\newcommand{\ie}{i.e.~}
\newcommand{\eg}{e.g.~}
\title{Hopf modules, Frobenius functors and (one-sided) Hopf algebras}
\author{Paolo Saracco}
\address{D\'epartement de Math\'ematique, Universit\'e Libre de Bruxelles, Boulevard du Triomphe, \indent B-1050 Brussels, Belgium.}
\thanks{This paper was written while P. Saracco was member of the ``National Group for Algebraic and Geometric Structures and their Applications'' (GNSAGA-INdAM). He acknowledges FNRS support through a postdoctoral fellowship within the framework of the MIS Grant ``ANTIPODE'' (MIS F.4502.18, application number 31223212). He is grateful to Alessandro Ardizzoni and Joost Vercruysse for their willingness in discussing the content of the present paper and to the referees for their useful comments and suggestions. \\
\indent This version of the article has been accepted for publication, after peer review. The final publication is available at Elsevier via \href{https://doi.org/10.1016/j.jpaa.2020.106537}{doi.org/10.1016/j.jpaa.2020.106537}.}
\keywords{Frobenius functors, one-sided Hopf algebras, Hopf modules, Frobenius algebras, FH-algebras, adjoint triples, Hopfish algebras}
\subjclass[2010]{16T05, 18A22} 
\urladdr{sites.google.com/view/paolo-saracco}
\email{paolo.saracco@ulb.ac.be}
\begin{document}

\begin{abstract}
We investigate the property of being Frobenius for some functors strictly related with Hopf modules over a bialgebra and how this property reflects on the latter. In particular, we characterize one-sided Hopf algebras with anti-(co)multiplicative one-sided antipode as those for which the free Hopf module functor is Frobenius. As a by-product, this leads us to relate the property of being an FH-algebra (in the sense of Pareigis) for a given bialgebra with the property of being Frobenius for certain naturally associated functors.
\end{abstract}

\maketitle

\fancyhf{}
\renewcommand{\headrulewidth}{0pt}
\thispagestyle{fancy}
\cfoot{\smallskip\footnotesize $\begin{gathered}\includegraphics[scale=0.5]{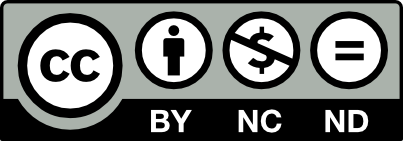}\end{gathered}$ \ \copyright\, 2020. This manuscript version is made available under the \href{https://creativecommons.org/licenses/by-nc-nd/4.0/}{CC-BY-NC-ND 4.0} license.}

\tableofcontents


\section*{Introduction}

An outstanding result of Morita \cite{Morita} claims that a $\K$-algebra extension $A\to B$ is Frobenius if and only if the restriction of scalars $U$ from $B$-modules to $A$-modules admits a two-sided adjoint, that is to say, if and only if $U$ is a \emph{Frobenius functor}. This established Frobenius functors as the categorical counterpart of Frobenius extensions, opening the way to the study of the Frobenius property in a broader sense (see \eg \cite{CaenepeelMilitaruZhu}).

An equally outstanding result of Pareigis \cite{Pareigis} claims that, under certain mild conditions, a $\K$-bialgebra $B$ is a finitely generated and projective Hopf algebra if and only if it is Frobenius as an algebra and the Frobenius homomorphism is a left integral on $B$. 

If we consider the free Hopf module functor $-\otimes B$ from the category of $\K$-modules to the category of (right) Hopf $B$-modules then it is well-known, under the name \emph{Structure Theorem of Hopf modules}, that $-\otimes B$ is an equivalence of categories if and only if $B$ admits an antipode. What seems to be not known is that this functor always fits into an adjoint triple $-\tensor{B}\K \, \dashv \, -\otimes B \,\dashv\, \coinv{(-)}{B}$ and the Structure Theorem describes when these are equivalences. It is natural then to ask ourselves what can be said if $-\otimes B$ is just a Frobenius functor. Surprisingly, the answer (see Theorem \ref{thm:sigmainv2}) involves the notion of \emph{one-sided Hopf algebras} introduced by Green, Nichols and Taft \cite{GreenNicholsTaft} and studied by Taft and collaborators \cite{IyerTaft, LauveTaft, NicholsTaft, RomoTaft1, RomoTaft2}: right Hopf algebras whose right antipode is an anti-bialgebra endomorphism are precisely those bialgebras for which $-\otimes B$ is Frobenius. A left-handed counterpart holds as well (Theorem \ref{thm:CharLeftHopf}) and merging the two together gives a new equivalent description of when a bialgebra is a Hopf algebra (Theorem \ref{thm:superEquiv}). 
An additional question which arises is how these achievements can be connected with Pareigis' classical result. In this direction we will show in Theorem \ref{thm:summingup} that, for a bialgebra $B$, being a Frobenius algebra whose Frobenius homomorphism is an integral in $B^*$ is strictly related to being Frobenius for certain functors naturally involved in the Structure Theorem. 

It is a well-known fact that there should exist a strong relationship between Hopf and Frobenius properties, as it can be deduced from many scattered results in the literature. Apart from Pareigis' work, let us mention that Larson and Sweedler \cite{LarsonSweedler} proved that the existence of an antipode for a finite-dimensional bialgebra $B$ over a PID is equivalent to the existence of a non-singular integral on $B$ and from this they deduced that finite-dimensional Hopf algebras over PID are always Frobenius. Hausser and Nill \cite{HausserNill} extended these results to quasi-Hopf algebras, Bulacu and Caenepeel \cite{BulacuCaenepeel} to dual quasi-Hopf algebras and Iovanov and Kadison \cite{IovanovKadison} addressed the question for the weak (quasi) Hopf algebra case.  Let us also recall the description of groupoids as special Frobenius objects in a suitable category given in \cite{Heunen}.
Following the spirit of these achievements, the results presented herein are intended to be a first step toward the investigation of the Frobenius-Hopf relationship by revealing connections between the property of being Hopf for bialgebras and the property of being Frobenius for certain functors. In a forthcoming paper \cite{Saracco-PreFrob}, we will develop this project by analysing, for example, the case of the functor $-\otimes B:\Lmod{B}\to \quasihopf{B}$.

Concretely, the paper is organized as follows. In Section \ref{sec:adjtriples} we recall some general facts about adjoint triples and Frobenius functors that will be needed later on. Section \ref{sec:HopfMod} is devoted to the study of when the Larson-Sweedler's free Hopf module functor is Frobenius. 
In Section \ref{sec:FrobeniusHopf} we will address some categorical implications of \cite{Pareigis} and we will investigate the connection between the Frobenius property for $-\otimes B:\M\to\rhopf{B}$, and for other strictly related functors, and the property of being an FH-algebra (or, equivalently, a Hopf algebra) for $B$.

\subsection*{Notations and conventions}

Throughout the paper, $\K$ denotes a commutative ring and $B$ a bialgebra over $\K$ with unit $u:\K\to B$, multiplication $m:B\otimes B\to B$, counit $\varepsilon:B\to \K$ and comultiplication $\Delta:B\to B\otimes B$. We write $B^+ = \ker(\varepsilon)$ for the augmentation ideal of $B$. The category of all (central) $\K$-modules is denoted by $\M$ and by $\M_B$, $\M^B$ and $\M^B_B$ (resp.~${_B\M}$, ${^B\M}$ and ${^B_B\M}$) we mean the categories of right (resp.~left) modules, comodules and Hopf modules over $B$, respectively. The unadorned tensor product $\otimes$ is the tensor product over $\K$ as well as the unadorned $\Homk$ stands for the space of $\K$-linear maps. The coaction of a comodule is denoted by $\delta$ and the action of a module by $\mu$, $\cdot$ or simply juxtaposition. In addition, if the context requires to report explicitly the (co)module structures on a $\K$-module $V$, then we use a full bullet, such as $V_\bullet$ or $V^\bullet$, to denote a given action or coaction respectively. By $V^u:=V\otimes \K^u$ and $V_\varepsilon:=V\otimes \K_\varepsilon$ we mean the trivial right comodule and right module structures on $V$ (analogously for the left ones).


\section{Preliminaries}\label{sec:adjtriples}

We recall some facts on adjoint triples and Frobenius functors that will be needed in the sequel.

\subsection{Adjoint triples}

For categories $\cC ,\cD$, an \emph{adjoint triple} is a triple of functors $\cL,\cR:\cC \rightarrow \cD $, $\cF:\cD \rightarrow \cC $ such that $\cL$ is left adjoint to $\cF$, which is left adjoint to $\cR$. It is called an \emph{ambidextrous adjunction} if $\cL\cong \cR$.
As a matter of notation, we set $\eta :\id \rightarrow \cF\cL,\epsilon :\cL\cF \rightarrow \id $ for the unit and counit of the left-most adjunction and $\gamma :\id \rightarrow \cR\cF,\theta :\cF\cR \rightarrow \id $ for the right-most one. 
If, in addition, $\cF$ is fully faithful, then we can consider the composition
\begin{equation*}
\sigma :=\left( \xymatrix{\cR \ar[r]^-{\left( \epsilon \cR\right) ^{-1}} & \cL\cF\cR \ar[r]^-{\cL\theta} & \cL}\right) .
\end{equation*}
Naturality of $\epsilon$ entails that $\sigma \cF\circ \gamma \circ \epsilon = \id$ and hence we have that 
\begin{equation}\label{eq:sigmaF}
\sigma \cF=\epsilon ^{-1}\circ \gamma ^{-1}
\end{equation}
is a natural isomorphism. Note also that 
\begin{equation}\label{eq:Fsigma}
\cF\sigma =\cF\cL\theta \circ \left( \cF\epsilon \cR\right) ^{-1}=\cF\cL\theta \circ \eta \cF\cR=\eta \circ \theta.
\end{equation}

\begin{remark}\label{rem:sigma}
One may consider $\left( \gamma \cL\right) ^{-1} \circ \cR\eta$ as well, but by resorting to the naturality of the morphisms involved, the invertibility of $\gamma$ and $\epsilon$ and the triangular identities, it turns out that $\gamma \cL\circ \cL\theta =\cR \eta \circ \epsilon \cR$.
\end{remark}

\begin{proposition}\label{prop:ambijunction}
If $\cF$ is fully faithful, then the adjoint triple $\cL\dashv \cF\dashv \cR$ is an ambidextrous adjunction if and only if $\sigma $ is a natural isomorphism.
\end{proposition}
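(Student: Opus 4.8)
The plan is to settle one direction in a single line and to put all the effort into the other. If $\sigma$ is a natural isomorphism then, since $\sigma$ is a transformation $\cR\to\cL$, we immediately get $\cL\cong\cR$, so the triple is ambidextrous; this needs no hypothesis beyond the existence of $\sigma$. For the converse I would start from the fact, already recorded in \eqref{eq:sigmaF}, that $\sigma\cF$ is invertible, and try to \emph{reflect} this invertibility back along $\cF$. The point to keep in mind is that one cannot cancel $\cF$ naively: $\sigma\cF$ invertible only tells us that $\sigma_C$ is invertible for $C$ in the essential image of $\cF$, which in general does not exhaust $\cC$.

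The technical core I would isolate first is a reflection principle for $\cL$. Since $\cF$ is fully faithful and appears as the right adjoint in $\cL\dashv\cF$, the counit $\epsilon\colon\cL\cF\to\id$ is invertible; the triangle identity $\epsilon\cL\circ\cL\eta=\id$ then forces $\cL\eta=(\epsilon\cL)^{-1}$ to be invertible as well. I claim that, consequently, any natural endotransformation $\beta\colon\cL\to\cL$ is an isomorphism as soon as $\beta\cF$ is. Indeed, evaluating the naturality square of $\beta$ on the unit morphism $\eta_C\colon C\to\cF\cL C$ gives $\beta_{\cF\cL C}\circ\cL\eta_C=\cL\eta_C\circ\beta_C$, so that $\beta_C=(\cL\eta_C)^{-1}\circ(\beta\cF)_{\cL C}\circ\cL\eta_C$ is a composite of isomorphisms, hence invertible, for every object $C$.

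With this principle the converse is short. Ambidexterity provides a natural isomorphism $\alpha\colon\cL\to\cR$; I would set $\beta:=\sigma\circ\alpha\colon\cL\to\cL$ and whisker with $\cF$ to obtain $\beta\cF=\sigma\cF\circ\alpha\cF$, which is invertible because $\sigma\cF$ is invertible by \eqref{eq:sigmaF} and $\alpha\cF$ is invertible since $\alpha$ is. The reflection principle then yields that $\beta$ is invertible, whence $\sigma=\beta\circ\alpha^{-1}$ is invertible, as wanted.

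The step I expect to be the main obstacle is precisely the cancellation of $\cF$, and the role of the detour through the endotransformation $\sigma\alpha$ of the single functor $\cL$ is to make the naturality-at-the-unit trick available. Applying that trick directly to $\sigma\colon\cR\to\cL$ would be circular: by Remark \ref{rem:sigma} one has $\sigma=(\gamma\cL)^{-1}\circ\cR\eta$, so $\cR\eta_C=\gamma_{\cL C}\circ\sigma_C$ is invertible if and only if $\sigma_C$ already is. It is worth noting that ambidexterity enters twice, to make $\beta\cF$ invertible and to recover $\sigma$ from $\beta$, while full faithfulness of $\cF$ is used only through the invertibility of $\epsilon$, equivalently of $\cL\eta$.
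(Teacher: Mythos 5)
Your proof is correct, and it runs on the same engine as the paper's: both arguments come down to naturality evaluated at the unit $\eta$, combined with the invertibility of $\cL\eta$ (which, as you note, follows from full faithfulness of $\cF$ through $\epsilon$ and the triangle identity), and in both cases the ambidexterity isomorphism is exactly what breaks the circularity you point out in your last paragraph. The deployment differs, though. The paper writes the naturality square of the given isomorphism $\tau\colon\cR\to\cL$ at $\eta$, namely $\tau\cF\cL\circ\cR\eta=\cL\eta\circ\tau$, deduces that $\cR\eta$ is invertible, and then concludes via the identity of Remark \ref{rem:sigma} (in the form $\gamma\cL\circ\sigma=\cR\eta$), so that $(\cR\eta)^{-1}\circ\gamma\cL$ is an explicit inverse for $\sigma$. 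You instead isolate the trick as a conservativity principle for endotransformations of $\cL$ and apply it to $\beta=\sigma\circ\alpha$, finishing with \eqref{eq:sigmaF} instead of Remark \ref{rem:sigma}. Your route buys a cleanly reusable lemma ($\beta\cF$ invertible implies $\beta$ invertible) and never needs to mention $\cR\eta$ or $\theta$; the paper's route buys a closed formula for $\sigma^{-1}$, which in your argument remains implicit in the conjugation $\beta=(\cL\eta)^{-1}\circ\beta\cF\cL\circ\cL\eta$. One small correction to your closing commentary: full faithfulness of $\cF$ does not enter only through $\epsilon$, since your appeal to \eqref{eq:sigmaF} also rests on the invertibility of $\gamma$, as $\sigma\cF=\epsilon^{-1}\circ\gamma^{-1}$.
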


\begin{proof}
If $\sigma $ is a natural isomorphism then $\cL\dashv \cF\dashv \cR$ is an ambidextrous adjunction. Conversely, if there exists a natural isomorphism $\tau :\cR\rightarrow \cL$ then the following diagram commutes 
\begin{equation*}
\xymatrix{
\cR \ar[r]^-{\cR\eta } \ar[d]_-{\tau} & \cR\cF\cL \ar[d]^-{\tau \cF\cL} \\ 
\cL \ar[r]_-{\cL\eta } & \cL\cF\cL
}
\end{equation*}
From the triangular identities of the adjunction $\cL\dashv \cF$ we have that $\cL\eta $ is a natural isomorphism.
Therefore, $\cR\eta $ is invertible and $\left( \cR\eta \right) ^{-1}\circ \gamma \cL$  gives an inverse for $\sigma $.
\end{proof}

\subsection{Frobenius pairs and functors}

A \emph{Frobenius pair} $(\cF,\cG)$ for $\cC $ and $\cD $ is a couple of functors $\cF:\cC \rightarrow \cD $ and $\cG:\cD \rightarrow \cC $ such that $\cG $\ is at the same time a left and a right adjoint to $\cF$. A functor $\cF$ is said to be \emph{Frobenius} if there exists a functor $\cG$ such that $(\cF,\cG)$ is a Frobenius pair.
Moreover, if $\cL\dashv \cF\dashv \cR$ is an adjoint triple, then it is an ambidextrous adjunction if and only if $(\cL,\cF)$ (equivalently, $(\cF,\cR)$) is a Frobenius pair. 

\begin{lemma}\label{lemma:FrobeniusSigma}
A fully faithful functor $\cF$ is Frobenius if and only if it is part of an adjoint triple $\cL\dashv \cF\dashv \cR$ where the canonical map $\sigma $ is a natural isomorphism.
\end{lemma}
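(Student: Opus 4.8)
The plan is to read the statement off from the preceding Proposition, combined with the observation recorded just above it that an ambidextrous adjunction carries exactly the same data as a Frobenius pair. Throughout I keep in mind that, $\cF$ being fully faithful, both the counit $\epsilon:\cL\cF\to\id$ and the unit $\gamma:\id\to\cR\cF$ are natural isomorphisms, so that the canonical map $\sigma$ is defined for any adjoint triple $\cL\dashv\cF\dashv\cR$ we may be handed.

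For the ``if'' direction, suppose $\cF$ is part of an adjoint triple $\cL\dashv\cF\dashv\cR$ for which $\sigma$ is a natural isomorphism. By the preceding Proposition this forces the triple to be an ambidextrous adjunction, \ie $\cL\cong\cR$. Then $\cL$ is at the same time left adjoint to $\cF$ (via $\cL\dashv\cF$) and right adjoint to $\cF$ (via $\cF\dashv\cR$ together with $\cR\cong\cL$), so that $(\cL,\cF)$ is a Frobenius pair and $\cF$ is Frobenius.

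For the ``only if'' direction, assume $\cF$ is Frobenius and pick a functor $\cG$ that is simultaneously left and right adjoint to $\cF$. Setting $\cL:=\cG=:\cR$ yields an adjoint triple $\cL\dashv\cF\dashv\cR$ that is ambidextrous by construction, since $\cL=\cR$. As $\cF$ is fully faithful, the preceding Proposition now applies and guarantees that the canonical map $\sigma$ attached to this triple is a natural isomorphism, which exhibits $\cF$ in the required form.

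I expect no genuine obstacle here: the entire computational content has already been absorbed into the preceding Proposition (which in turn rests on the triangular identities and the invertibility of $\epsilon$ and $\gamma$). The only step deserving a moment's attention is the ``only if'' direction, where one must realise a single two-sided adjoint $\cG$ as both outer legs $\cL$ and $\cR$ of an adjoint triple so as to be in a position to invoke the Proposition; beyond this mild repackaging the argument is pure bookkeeping.
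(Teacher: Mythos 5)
Your proof is correct and follows essentially the same route as the paper: both directions reduce to the preceding Proposition (for a fully faithful $\cF$, the triple $\cL\dashv\cF\dashv\cR$ is ambidextrous iff $\sigma$ is a natural isomorphism) combined with the observation that ambidextrous adjunctions are exactly Frobenius pairs, with the only non-automatic step being the repackaging of a two-sided adjoint $\cG$ as $\cL=\cG=\cR$ in the ``only if'' direction, which you handle correctly.
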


\begin{proof}
If $\cF$ is Frobenius, then there exists a functor $\cG$ such that $(\cF,\cG)$ is a Frobenius pair. In particular, $\cG \dashv \cF \dashv \cG$ is an ambidextrous adjunction. By taking $\cL = \cG = \cR$ and by applying Proposition \ref{prop:ambijunction}, we have that $\cL \dashv \cF \dashv \cR$ is an adjoint triple with $\sigma$ a natural isomorphism. Conversely, if $\cF$ is part of an adjoint triple $\cL\dashv \cF\dashv \cR$ where the canonical map $\sigma $ is a natural isomorphism, then $\cL\dashv \cF\dashv \cR$ is an ambidextrous adjunction by Proposition \ref{prop:ambijunction} again and hence $\cF$ is Frobenius. 
\end{proof}

Since we are mainly interested in adjoint triples whose middle functor is fully faithful, Lemma \ref{lemma:FrobeniusSigma} allows us to study the Frobenius property by simply looking at the invertibility of the canonical map $\sigma$.

Recall finally from \cite{SepFunctors} that a functor $\cF:\cC\to\cD$ is said to be \emph{separable} if the natural transformation $\cF:\Hom{}{}{\cC}{}{\cdot}{\cdot} \to \Hom{}{}{\cD}{}{\cF(\cdot)}{\cF(\cdot)}$ splits. 
In light of Rafael's Theorem \cite[Theorem 1.2]{Rafael}, a left (resp.~right) adjoint is separable if and only if the unit (resp.~counit) of the adjunction is a split monomorphism (resp.~epimorphism).

\begin{proposition}\label{prop:equiv}
For a fully faithful functor $\cF:\cC \rightarrow \cD $, the following are equivalent
\begin{enumerate}[label=(\arabic*), ref=\emph{(\arabic*)}, leftmargin=1cm, labelsep=0.3cm]
\item\label{item:equiv1} $\cF$ is an equivalence;
\item\label{item:equiv2} $\cF$ admits a separable right adjoint $\cR$;
\item\label{item:equiv3} $\cF$ admits a separable left adjoint $\cL$.
\end{enumerate}
\end{proposition}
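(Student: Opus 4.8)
The plan is to prove the three equivalences by showing $\ref{item:equiv1}\Leftrightarrow\ref{item:equiv2}$ and $\ref{item:equiv1}\Leftrightarrow\ref{item:equiv3}$, exploiting the fully faithfulness of $\cF$ throughout. The key observation is that for a fully faithful functor, being an equivalence is the same as being essentially surjective, and this in turn is controlled by the invertibility of the (co)unit of an adjunction. More precisely, if $\cF\dashv\cR$ is an adjunction with unit $\gamma:\id\to\cR\cF$ and counit $\theta:\cF\cR\to\id$, then $\cF$ fully faithful forces $\gamma$ to be a natural isomorphism (this is the standard characterization of fully faithful left adjoints), so $\cF$ is an equivalence precisely when $\theta$ is also a natural isomorphism.

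\textbf{Proof of $\ref{item:equiv1}\Rightarrow\ref{item:equiv2}$ and $\ref{item:equiv1}\Rightarrow\ref{item:equiv3}$.} If $\cF$ is an equivalence, then any quasi-inverse $\cG$ serves simultaneously as a left and a right adjoint, and both the unit and the counit of each adjunction are natural isomorphisms. In particular the counit of $\cF\dashv\cR$ is invertible, hence a (split) epimorphism, and the unit of $\cL\dashv\cF$ is invertible, hence a (split) monomorphism; by Rafael's Theorem (quoted in the excerpt) this makes $\cR$ separable and $\cL$ separable, respectively.

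\textbf{Proof of $\ref{item:equiv2}\Rightarrow\ref{item:equiv1}$.} Suppose $\cF\dashv\cR$ with $\cR$ separable. By Rafael's Theorem the counit $\theta:\cF\cR\to\id$ is a split epimorphism, so each component $\theta_D$ is a retraction. On the other hand, since $\cF$ is fully faithful, the unit $\gamma:\id\to\cR\cF$ is a natural isomorphism. The plan is to upgrade the split epimorphism $\theta$ to a genuine isomorphism: using the triangular identity $\theta\cF\circ\cF\gamma=\id_{\cF}$ together with the invertibility of $\gamma$, one shows that $\theta\cF$ is a natural isomorphism, and then a diagram-chase combining the splitting of $\theta$ with this identity forces every component $\theta_D$ to be invertible. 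Once $\theta$ and $\gamma$ are both natural isomorphisms, $\cF\dashv\cR$ is an adjoint equivalence and $\cF$ is an equivalence. The implication $\ref{item:equiv3}\Rightarrow\ref{item:equiv1}$ is the dual argument, using that the unit $\eta:\id\to\cF\cL$ is a split monomorphism and that fully faithfulness of $\cF$ makes the counit $\epsilon:\cL\cF\to\id$ invertible.

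\textbf{The main obstacle} I expect is the step where a split epimorphic counit is promoted to an isomorphism. A split epi is not automatically invertible, so the argument must genuinely use the extra input from fully faithfulness, namely that the unit $\gamma$ is iso and that the triangular identities constrain $\theta$. The cleanest route is likely to observe that $\theta\cF$ is invertible (from $\theta\cF\circ\cF\gamma=\id$ and $\gamma$ iso), and then argue that a split epimorphism $\theta_D$ whose image under the (faithful, conservative-enough) data is invertible must itself be invertible; care is needed because invertibility of $\theta\cF$ only directly controls components $\theta_{\cF(C)}$, and one must bootstrap to arbitrary $D\in\cD$ using the splitting. This is the point where the proof has real content beyond formal adjunction juggling.
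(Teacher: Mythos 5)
Your direction $\ref{item:equiv1}\Rightarrow\ref{item:equiv2},\ref{item:equiv3}$ is correct and complete (a quasi-inverse is simultaneously a left and a right adjoint with invertible unit and counit, whence separability by Rafael's Theorem); the paper leaves this implicit. The problem is $\ref{item:equiv2}\Rightarrow\ref{item:equiv1}$ (and its dual), precisely at the step you yourself flag as the ``main obstacle'': you never carry out the promotion of the split epimorphism $\theta$ to an isomorphism, and the route you sketch --- regard each $\theta_D$ as a componentwise retraction and ``bootstrap'' invertibility from the components $\theta_{\cF(C)}$ to arbitrary $D$ via faithfulness/conservativity --- is not a proof and, as phrased, discards the one piece of data that makes the argument work. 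Rafael's Theorem does not merely say that each component $\theta_D$ admits a section; it provides a \emph{natural} transformation $\tau:\id\to\cF\cR$ with $\theta\circ\tau=\id$, and it is the naturality of $\tau$, not any collection of componentwise splittings, that transports invertibility from the image of $\cF$ to all of $\cD$.

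Here is how the paper closes exactly this gap. Since $\cF$ is fully faithful, the unit $\gamma$ is invertible, and the triangular identity $\theta\cF\circ\cF\gamma=\id$ shows that $\theta\cF=\cF\gamma^{-1}$ is an isomorphism. Whiskering the splitting gives $\theta\cF\circ\tau\cF=\id$, and a section of an isomorphism is its inverse, so $\tau\cF=\cF\gamma$. Now apply naturality of $\tau$ to the morphism $\theta_D:\cF\cR(D)\to D$ itself, whose source lies in the image of $\cF$:
\[
\tau\circ\theta \;=\; \cF\cR\theta\circ\tau\cF\cR \;=\; \cF\cR\theta\circ\cF\gamma\cR \;=\; \cF\left(\cR\theta\circ\gamma\cR\right) \;=\; \id,
\]
the last equality being the second triangular identity. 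Hence $\tau$ is a two-sided inverse of $\theta$, so $\theta$ is a natural isomorphism, and together with the invertibility of $\gamma$ this makes $\cF$ an equivalence. This is the ``diagram chase'' you anticipated, but note that it hinges on evaluating the \emph{natural} section along $\theta_D$; once formulated this way, the dual argument for $\ref{item:equiv3}\Rightarrow\ref{item:equiv1}$ (with $\eta$, $\epsilon$ in place of $\theta$, $\gamma$) goes through verbatim.
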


\begin{proof}
We prove that \ref{item:equiv3} implies \ref{item:equiv1} (the implication $\ref{item:equiv2}\Rightarrow \ref{item:equiv1}$ is analogous). If $\cR$ is separable then there exists a natural transformation $\tau:\id\to \cF\cR$ such that $\theta\circ\tau=\id$. Since the unit $\gamma$ is a natural isomorphism (because $\cF$ is fully faithful), the triangular identities imply that $\tau\cF = \cF\gamma$, as $\theta\cF\circ\tau\cF=\id$ and $\theta\cF=\cF\gamma^{-1}$. Now, naturality of $\tau$ implies that
\[
\tau\circ\theta = \cF\cR\theta\circ \tau\cF\cR = \cF\cR\theta\circ\cF\gamma\cR = \cF(\cR\theta\circ\gamma\cR) = \id.\qedhere
\]
\end{proof}

We conclude this section with the following result, for future reference. Recall from \cite[Definition 1.1]{Street} that a monad $(T:\cC\to\cC,\mu,u)$ is \emph{Frobenius} when it is equipped with a natural transformation $e:T\to\id_{\cC}$ such that there exists a natural transformation $\rho:\id_{\cC}\to T^2$ satisfying
\[
T\mu\circ \rho T = \mu T\circ T\rho \qquad \text{and} \qquad Te\circ \rho = u = eT\circ \rho.
\]
Moreover, recall that a functor $\cF:\cC\to\cD$ is said to be \emph{monadic} if it admits a left adjoint $\cL$ and the comparison functor $\cK:\cC\to\cD^{\mathbb{T}}$ is an equivalence of categories, where $\mathbb{T}=(\cF\cL,\eta,\cF\epsilon\cL)$ is the monad associated with the adjunction $\cL\dashv\cF$ and $\cD^{\mathbb{T}}$ is its Eilenberg-Moore category.

\begin{proposition}\label{prop:monadic}
Let $\cF:\cC\to\cD$ be monadic with left adjoint $\cL$. Then the monad $\mathbb{T}$ is Frobenius if and only if $\cF$ is Frobenius.
In particular, if $\cL\dashv \cF\dashv\cR$ is an adjoint triple with $\cF$ fully faithful, then $\mathbb{T}=(\cF\cL,\eta,\cF\epsilon\cL)$ is a Frobenius monad on $\cC$ if and only if $\cF$ is Frobenius.
\end{proposition}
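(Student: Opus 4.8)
The plan is to reduce the statement to the Eilenberg--Moore realization of $\mathbb{T}$ and then to invoke the correspondence between Frobenius monads and ambidextrous adjunctions. Since $\cF$ is monadic, the comparison functor $\cK\colon\cC\to\cD^{\mathbb{T}}$ is an equivalence satisfying $U^{\mathbb{T}}\cK=\cF$ and $\cK\cL\cong F^{\mathbb{T}}$, where $F^{\mathbb{T}}\dashv U^{\mathbb{T}}$ denotes the free--forgetful adjunction of the Eilenberg--Moore category. As the Frobenius property is stable under composition with equivalences (an equivalence is itself a Frobenius functor, its quasi-inverse serving as both a left and a right adjoint, and Frobenius functors compose), $\cF=U^{\mathbb{T}}\cK$ is Frobenius if and only if $U^{\mathbb{T}}$ is. Thus it suffices to prove that $\mathbb{T}$ is a Frobenius monad if and only if $U^{\mathbb{T}}$ is a Frobenius functor.

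For the latter I would first observe that $F^{\mathbb{T}}\dashv U^{\mathbb{T}}$ holds by construction and that left adjoints are unique up to isomorphism; hence $U^{\mathbb{T}}$ is Frobenius if and only if $F^{\mathbb{T}}$ is also a right adjoint of $U^{\mathbb{T}}$, that is, if and only if there is an adjunction $U^{\mathbb{T}}\dashv F^{\mathbb{T}}$. The counit of such an adjunction is a natural transformation $U^{\mathbb{T}}F^{\mathbb{T}}=T\to\id_{\cD}$ and its unit a natural transformation $\id_{\cD^{\mathbb{T}}}\to F^{\mathbb{T}}U^{\mathbb{T}}$. The key step is then to set up a dictionary between Frobenius structures $(e,\rho)$ on $\mathbb{T}$ and such adjunctions: I would take $e\colon T\to\id_{\cD}$ as the counit and, for a $T$-algebra $(A,a)$, define the unit by $\zeta_{(A,a)}:=Ta\circ\rho_A\colon A\to TA$, which one checks is a morphism of $T$-algebras $(A,a)\to(TA,\mu_A)=F^{\mathbb{T}}U^{\mathbb{T}}(A,a)$; conversely, from an adjunction $U^{\mathbb{T}}\dashv F^{\mathbb{T}}$ one recovers $e$ as the counit and $\rho$ by evaluating the unit at the free algebras.

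The heart of the argument, and the main obstacle, is to verify that this dictionary is well defined and compatible: concretely, that $\zeta_{(A,a)}$ is an algebra map (this uses the compatibility axiom $T\mu\circ\rho T=\mu T\circ T\rho$ together with naturality), and that the two triangular identities for $U^{\mathbb{T}}\dashv F^{\mathbb{T}}$ are exactly equivalent to the remaining Frobenius axioms $Te\circ\rho=u=eT\circ\rho$. For instance, the first triangle reduces, via naturality of $e$ and the axiom $eT\circ\rho=u$, to the algebra unit law $a\circ u_A=\id_A$, while the second follows from $Te\circ\rho=u$ and the monad unit law. This is precisely the content of Street's correspondence between Frobenius monads and ambidextrous adjunctions \cite{Street}, which I would either invoke directly or establish through these (routine but bookkeeping-heavy) diagram chases; either way it yields $\mathbb{T}$ Frobenius if and only if $U^{\mathbb{T}}$ Frobenius, completing the general statement.

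Finally, for the ``in particular'' clause it remains to note that the middle functor $\cF$ of the adjoint triple is monadic: being fully faithful and admitting the left adjoint $\cL$, it is the inclusion of a reflective subcategory, whose associated monad $\mathbb{T}=(\cF\cL,\eta,\cF\epsilon\cL)$ is idempotent and for which the comparison functor is an equivalence. Applying the general statement to $\cF$ then gives that $\mathbb{T}$ is a Frobenius monad on $\cC$ if and only if $\cF$ is Frobenius.
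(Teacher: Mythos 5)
Your proposal is correct and takes essentially the same route as the paper: both hinge on Street's correspondence ($\mathbb{T}$ is Frobenius if and only if the forgetful functor $\cU^{\mathbb{T}}:\cD^{\mathbb{T}}\to\cD$ is Frobenius, \cite[Proposition 1.5]{Street}) and then transfer the Frobenius property across the comparison equivalence $\cK$ using $\cU^{\mathbb{T}}\cK=\cF$ and $\cF^{\mathbb{T}}\cong\cK\cL$. The only cosmetic differences are that you perform the transfer by noting that Frobenius functors compose and equivalences are Frobenius (where the paper instead writes explicit hom-set bijections showing $\cF\dashv\cL$ if and only if $\cU^{\mathbb{T}}\dashv\cF^{\mathbb{T}}$), and that for the final clause you prove monadicity of the fully faithful right adjoint via the standard reflective-subcategory argument rather than citing \cite[Proposition 2.5]{ArdiMeniniPepe}.
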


\begin{proof}
By \cite[Proposition 1.5]{Street} we know that $\mathbb{T}$ is Frobenius if and only if the forgetful functor $\cU^{\mathbb{T}}:\cD^{\mathbb{T}}\to\cD$ is Frobenius. Denote by $\cF^{\mathbb{T}}:\cD\to\cD^{\mathbb{T}}$ the free algebra functor (which is left adjoint to $\cU^{\mathbb{T}}$) and recall that $\cF^{\mathbb{T}} = \cK\cL$ and that $\cU^{\mathbb{T}}\cK = \cF$. The following bijections
\begin{gather*}
\Hom{}{}{\cD}{}{\cF(X)}{Y} \cong \Hom{}{}{\cD}{}{\cU^{\mathbb{T}}\cK(X)}{Y}, \\
\Hom{}{}{\cC}{}{X}{\cL(Y)} \cong \Hom{}{}{\cD^{\mathbb{T}}}{}{\cK(X)}{\cK\cL(Y)} \cong \Hom{}{}{\cD^{\mathbb{T}}}{}{\cK(X)}{\cF^{\mathbb{T}}(Y)},
\end{gather*}
make it clear that $\cU^{\mathbb{T}}$ is left adjoint to $\cF^{\mathbb{T}}$ (\ie $\cU^{\mathbb{T}}$ is Frobenius) if and only if $\cF$ is left adjoint to $\cL$.
Concerning the second assertion of the statement, recall from \cite[Proposition 2.5]{ArdiMeniniPepe} that, in the stated hypotheses, the functor $\cF$ is monadic.
\end{proof}


\section{One-sided Hopf algebras and the free Hopf module functor}\label{sec:HopfMod}


In this section we study an example of an adjoint triple that naturally arises working with Hopf modules over a bialgebra. Deciding when this adjoint triple gives rise to a Frobenius functor leads to consider a certain weaker analogue of Hopf algebras, namely \emph{one-sided Hopf algebras}. 

Let $\left( B,m,u,\Delta ,\varepsilon \right) $ be a bialgebra over $\K$. 
It is well-known that $\left( \rmod{B},\Delta ,\varepsilon \right) $ is a comonoid in the monoidal category $\M _{B}$ of right $B$-modules and hence the forgetful functor $\cU ^{B}: \M_{B}^{B} \to \M_{B}$ is left adjoint to $-\otimes B : \M_{B} \to \M_{B}^{B}$. 
In addition, since ${_{\varepsilon }\K }$ is a $\left( B,\K \right) $-bimodule, the hom-tensor adjunction gives rise to another pair of adjoint functors between $\M$ and $\M_B$: $-\otimes_{B}{_\varepsilon\K} \ \dashv \ \Hom{}{}{}{}{{_\varepsilon\K}}{-}$.
Composing the two adjunctions, we get $\inv{\left( -\right)}{B} \ \dashv \ -\otimes B$ where, for every Hopf module $M$, the $\K$-module $\inv{M}{B}$ is the quotient 
\begin{equation*}
\inv{M}{B}=\frac{M}{MB^{+}} \cong \cU ^{B}\left( M\right) \otimes _{B}{_\varepsilon\K}
\end{equation*}
and, for every $V$ in $\M$, the Hopf module structure on $V\otimes B$ is given by
\begin{equation}\label{eq:TrivHopfMod} 
\delta \left( v\otimes b\right) = v\otimes b_{1}\otimes b_{2}, \qquad
\left( v\otimes b\right) \cdot a = v\otimes ba, 
\end{equation}
for every $v\in V,a,b\in B$ (the notation $b_{1}\otimes b_{2}$ stands for $\Delta(b)$, by resorting to Sweedler's Sigma Notation). On the other hand, since $(\rcomod{B},m,u) $ is also a monoid in the monoidal category $\M ^{B}$, we can join the two pairs of adjoint functors $-\otimes B \ \dashv \ \cU _{B}$ (between $\M_B$ and $\M_B^B$) and $-\otimes \K ^{u} \ \dashv \ \coinv{(-)}{B}$ (between $\M$ and $\M^B$) to get $-\otimes B \ \dashv \ \coinv{(-)}{B}$, where the Hopf module structure on $V\otimes B$ is the same of \eqref{eq:TrivHopfMod}, $\K ^{u}$ is the right $B$-comodule structure on $\K $ induced by $u:\K \rightarrow B$ and for every Hopf module $M$,
\begin{equation*}
M^{\mathrm{co}B}=\Hom{}{}{}{B}{\K ^{u}}{\cU _{B}\left(M\right) } =\left\{ m\in N\mid \delta \left( m\right) =m\otimes 1\right\} .
\end{equation*}
Summing up, we have an adjoint triple
\begin{equation}\label{eq:FundTriple}
\begin{gathered}
\xymatrix @R=20pt{
\M_{B}^{B} \ar@/_4ex/@<-0.3ex>[d]_-{\inv{(-)}{B}} \ar@/^4ex/@<+0.3ex>[d]^-{\coinv{(-)}{B}} \\ 
\M \ar[u]|-{-\otimes B}
}
\end{gathered}
\end{equation}
with units and counits given by
\begin{gather}\label{eq:unitscounits}
\begin{gathered}
\eta _{M} :M \rightarrow \inv{M}{B} \otimes B, \quad  m \mapsto \overline{m_{0}} \otimes m_{1}, \qquad \epsilon _{V} :\inv{V \otimes B}{B} \cong V, \quad  \overline{v \otimes b} \mapsto v\varepsilon \left( b\right) , \\
\gamma _{V} :V \cong \coinv{(V \otimes B)}{B}, \quad  v \mapsto v \otimes 1, \qquad \theta _{M} :\coinv{M}{B} \otimes B \rightarrow M, \quad  m \otimes b \mapsto m\cdot b,
\end{gathered}
\end{gather}
and we are exactly in the situation of \S\ref{sec:adjtriples}. The canonical morphism $\sigma $ is simply
\begin{equation*}
\sigma _{M}:M^{\mathrm{co}B}\rightarrow \inv{M}{B}, \quad m \mapsto  \overline{m},
\end{equation*}
and we want to investigate what can be said if this is a natural isomorphism, that is to say, we are interested in characterizing when the functor $-\otimes B$ is a Frobenius functor.

\begin{lemma}\label{lemma:sigmainv}
Given $M\in\rhopf{B}$, $\sigma_M$ is an isomorphism if and only if $M\cong \coinv{M}{B}\oplus MB^+$ as a $\K$-module.
\end{lemma}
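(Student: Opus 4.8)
The plan is to recognize $\sigma_M$ as the composite of the canonical inclusion of the coinvariants into $M$ with the canonical projection of $M$ onto $M/MB^+$, and then to reduce the claim to the elementary fact that the projection sends a submodule of $M$ isomorphically onto the quotient $M/MB^+$ if and only if that submodule is a $\K$-linear complement of $MB^+$.

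First I would make explicit that, according to the formula $\sigma_M(m)=\overline{m}$ recorded above, the map $\sigma_M$ is precisely the composite
\[
\coinv{M}{B} \xrightarrow{\ \iota\ } M \xrightarrow{\ \pi\ } M/MB^+ = \inv{M}{B},
\]
where $\iota$ denotes the inclusion $\coinv{M}{B}\subseteq M$ and $\pi$ is the canonical projection, whose kernel is $MB^+$. From this description the kernel and image of $\sigma_M$ are immediate: one has $\Ker{\sigma_M}=\coinv{M}{B}\cap MB^+$, since an element of $\coinv{M}{B}$ dies in the quotient exactly when it already lies in $MB^+$, and $\Img{\sigma_M}=\bigl(\coinv{M}{B}+MB^+\bigr)/MB^+$.

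Consequently $\sigma_M$ is injective if and only if $\coinv{M}{B}\cap MB^+=0$, and it is surjective if and only if $\coinv{M}{B}+MB^+=M$. The two conditions together are exactly the statement that the map
\[
\coinv{M}{B}\oplus MB^+ \longrightarrow M, \qquad (m,x)\longmapsto m+x,
\]
induced by the two inclusions is bijective, that is, that $M$ decomposes as the internal direct sum $\coinv{M}{B}\oplus MB^+$. Reading ``$M\cong \coinv{M}{B}\oplus MB^+$ as a $\K$-module'' as this internal decomposition, one obtains both implications at once: $\sigma_M$ is an isomorphism precisely when $M=\coinv{M}{B}\oplus MB^+$.

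I do not expect a genuine obstacle here: the whole content is the identification of $\sigma_M$ with $\pi\circ\iota$ together with a routine application of the correspondence between complements of a submodule and splittings of the associated quotient. The only subtlety worth flagging is the meaning of the direct sum in the statement, which must be understood as the internal decomposition realized by the canonical inclusion and projection (equivalently, via the summation map above); with this reading the abstract isomorphism and the canonical one coincide, and no cancellation argument over the ground ring $\K$ is required.
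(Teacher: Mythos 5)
Your proposal is correct and follows essentially the same route as the paper: identify the kernel and image of $\sigma_M$ to see that injectivity is equivalent to $\coinv{M}{B}\cap MB^+=0$ and surjectivity to $\coinv{M}{B}+MB^+=M$, which together say that the canonical map $\coinv{M}{B}\oplus MB^+\to M$ is an isomorphism. Your explicit factorization of $\sigma_M$ as $\pi\circ\iota$ and the remark that the direct sum must be read as the internal (canonical) decomposition merely spell out what the paper leaves implicit.
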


\begin{proof}
Observe that $\sigma_M$ is injective if and only if $\coinv{M}{B}\cap MB^+ = 0$ and it is surjective if and only if $M=\coinv{M}{B}+MB^+$, whence it is bijective if and only if the canonical morphism $\coinv{M}{B}\oplus MB^+ \to M$ is an isomorphism.
\end{proof}

Henceforth, for the sake of simplicity, we will denote the Hopf module $B_{\bullet }\otimes B_{\bullet }^{\bullet }$ by $B\cmptens{}B$ or simply $\widehat{B}$. Explicitly, for all $a,b,c\in B$ its structures are given by
\begin{gather*}
\delta \left( a\otimes b\right) =a\otimes b_{1}\otimes b_{2}, \quad \left( a\otimes b\right) \cdot c =ac_{1}\otimes bc_{2}.
\end{gather*}

\begin{remark}
Notice that for $a_{i}\otimes b_{i}\in \coinv{ (B\cmptens{} B)}{B}$ we have 
\begin{equation}
a_{i}\otimes b_{i}=\left( a_{i}\varepsilon \left( b_{i}\right) \right) \otimes 1  \label{eq:coinvsimpl}
\end{equation}
because coinvariance implies that $a_{i}\otimes \left( b_{i}\right) _{1}\otimes \left( b_{i}\right)_{2}=a_{i}\otimes b_{i}\otimes 1$.
Thus every element in $\coinv{ (B\cmptens{} B)}{B}$ is of the form $x\otimes 1$ for $x\in B$.
\end{remark}

\begin{lemma}\label{lemma:sigmalinear}
The $\K$-modules $\coinv{ (B\cmptens{} B)}{B}$ and $\inv{B\cmptens{} B}{B}$ are left $B$-modules with actions
\begin{equation}
a\cdot \left( x\otimes 1\right) =ax\otimes 1\qquad \text{and}\qquad a\cdot  \overline{x_{i}\otimes y_{i}}=\overline{ax_{i}\otimes y_{i}},
\label{eq:leftaction}
\end{equation}
respectively. The canonical morphism $\sigma _{\widehat{B}}$ is left $B$-linear with respect to these actions.
\end{lemma}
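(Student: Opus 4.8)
The plan is to realise both left $B$-actions in the statement as coming from a single, manifestly well-defined, left $B$-module structure on $B\otimes B$, namely left multiplication on the first tensor factor:
\[
a\triangleright(u\otimes v):=au\otimes v, \qquad a,u,v\in B.
\]
This is clearly associative and unital, so it turns $B\otimes B$ into a left $B$-module. The two actions in \eqref{eq:leftaction} will then arise by restricting $\triangleright$ to the submodule $\coinv{(B\cmptens{} B)}{B}$ and by pushing $\triangleright$ to the quotient $\inv{B\cmptens{} B}{B}$, respectively, and the left $B$-linearity of $\sigma_{\what{B}}$ will be automatic, since $\sigma_{\what{B}}$ is induced by $\triangleright$ on both sides.

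First I would check that $\triangleright$ preserves coinvariants. Since the coaction $\delta(u\otimes v)=u\otimes v_{1}\otimes v_{2}$ only modifies the second tensorand, left multiplication on the first factor commutes with it: denoting by $m_{a}$ left multiplication by $a$ on the first leg, one has $\delta(a\triangleright w)=(m_{a}\otimes\id\otimes\id)(\delta w)$ for every $w\in B\otimes B$. Hence if $w\in\coinv{(B\cmptens{} B)}{B}$, so that $\delta(w)=w\otimes1$, then $\delta(a\triangleright w)=(a\triangleright w)\otimes1$ and $a\triangleright w$ is again coinvariant. By \eqref{eq:coinvsimpl} every coinvariant element is of the form $x\otimes1$, so on $\coinv{(B\cmptens{} B)}{B}$ the restricted action reads $a\cdot(x\otimes1)=ax\otimes1$, which is the first formula in \eqref{eq:leftaction}.

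Next I would check that $\triangleright$ descends to $\inv{B\cmptens{} B}{B}=(B\otimes B)/(B\otimes B)B^{+}$; for this it suffices that $\triangleright$ stabilises the submodule $(B\otimes B)B^{+}$. This follows from the fact that $\triangleright$ commutes with the right Hopf-module action $(u\otimes v)\cdot c=uc_{1}\otimes vc_{2}$, since
\[
a\triangleright\big((u\otimes v)\cdot c\big)=auc_{1}\otimes vc_{2}=\big(a\triangleright(u\otimes v)\big)\cdot c.
\]
Consequently $a\triangleright(w\cdot c)=(a\triangleright w)\cdot c\in(B\otimes B)B^{+}$ for all $w\in B\otimes B$ and $c\in B^{+}$, so by linearity $\triangleright$ preserves $(B\otimes B)B^{+}$ and therefore induces a left action on the quotient, given by $a\cdot\overline{x_{i}\otimes y_{i}}=\overline{ax_{i}\otimes y_{i}}$, which is the second formula in \eqref{eq:leftaction}.

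Finally, left $B$-linearity of $\sigma_{\what{B}}$ is immediate: $\sigma_{\what{B}}$ sends a coinvariant $m$ to its class $\overline{m}$, and since both the domain and the codomain actions are induced by the same map $\triangleright$, we get $\sigma_{\what{B}}(a\triangleright m)=\overline{a\triangleright m}=a\cdot\overline{m}=a\cdot\sigma_{\what{B}}(m)$. I do not expect any genuine obstacle here: the whole content of the lemma is the two compatibility checks, and both reduce to the single observation that left multiplication on the first tensorand commutes with the coaction (which only affects the second tensorand) and with the right action. The only point requiring a little care is the well-definedness of the quotient action, i.e.\ the stability of $(B\otimes B)B^{+}$, which is exactly what the commutation with the right action delivers.
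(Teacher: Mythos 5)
Your proof is correct and matches the paper's argument: the paper dismisses this lemma as ``Straightforward,'' the intended content being exactly the checks you carry out (well-definedness of the two actions and the computation $\sigma_{\what{B}}(a\cdot(x\otimes 1))=\overline{ax\otimes 1}=a\cdot\sigma_{\what{B}}(x\otimes 1)$). Your packaging of both actions as restrictions/quotients of the single action $a\triangleright(u\otimes v)=au\otimes v$ on $B\otimes B$, which commutes with the coaction and with the right Hopf-module action, is a clean way of organizing those same routine verifications.
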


\begin{proof}
Straightforward.
\end{proof}

\begin{proposition}\label{prop:sigmainv}
For a bialgebra $B$, $\sigma$ is a natural isomorphism if and only if there exists a $\K$-linear endomorphism $S:B\to B$ such that $S(1)=1$, $\varepsilon \circ S = \varepsilon$ and
\begin{gather}
a_1S(ba_2)=\varepsilon(a)S(b), \label{eq:super} \\
S(b)\otimes 1 = S(b_2)_1\otimes b_1S(b_2)_2, \label{eq:super2}
\end{gather}
 for all $a,b\in B$. In particular, if the foregoing conditions hold, then $\sigma_M^{-1}(\cl{m})=m_0\cdot S(m_1)$ for all $M\in \rhopf{B}$ and $m\in M$.
\end{proposition}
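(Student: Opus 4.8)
The plan is to prove the two implications separately, exhibiting in each case an explicit candidate for $\sigma^{-1}$.

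For sufficiency, assume such an $S$ exists and define, for every $M\in\rhopf{B}$, a map $\tau_M\colon\inv{M}{B}\to\coinv{M}{B}$ by $\tau_M(\cl{m})=m_0\cdot S(m_1)$. I would first check it is well defined on $\inv{M}{B}=M/MB^+$: since $\delta(m\cdot c)=m_0c_1\otimes m_1c_2$, the assignment $m\mapsto m_0S(m_1)$ sends $m\cdot c$ to $m_0\cdot\big(c_1S(m_1c_2)\big)=\varepsilon(c)\,m_0S(m_1)$ by \eqref{eq:super}, so it vanishes on $MB^+$. Next I would verify the image is coinvariant: expanding $\delta\big(m_0S(m_1)\big)=m_0S(m_2)_1\otimes m_1S(m_2)_2$ and applying \eqref{eq:super2} for each fixed value of the comodule leg (with $m_0$ as a spectator) gives $m_0S(m_1)\otimes 1$. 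Finally $S(1)=1$ yields $\tau_M\sigma_M=\id$ on $\coinv{M}{B}$, while $\varepsilon\circ S=\varepsilon$ gives $\sigma_M\tau_M=\id$ because $m_0S(m_1)\equiv m_0\varepsilon(m_1)=m$ modulo $MB^+$. Naturality of $\tau$ is immediate from colinearity and linearity of morphisms of Hopf modules. This at once proves that $\sigma$ is a natural isomorphism and that $\sigma_M^{-1}=\tau_M$, which is the asserted formula.

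For necessity, assume $\sigma$ is a natural isomorphism. As $\sigma_{\widehat{B}}$ is invertible and, by \eqref{eq:coinvsimpl}, every coinvariant of $\widehat{B}$ has the form $x\otimes 1$, I would define $S\colon B\to B$ by $\sigma_{\widehat{B}}^{-1}(\cl{1\otimes b})=S(b)\otimes 1$. Coinvariance of $1\otimes 1$ gives $S(1)=1$, and the functional $\varepsilon\otimes\varepsilon$ (which descends to $\inv{\widehat{B}}{B}$) applied to $\cl{S(b)\otimes 1}=\cl{1\otimes b}$ gives $\varepsilon\circ S=\varepsilon$. By Lemma \ref{lemma:sigmalinear}, $\sigma_{\widehat{B}}^{-1}$ is left $B$-linear, so $\sigma_{\widehat{B}}^{-1}(\cl{a\otimes b})=aS(b)\otimes 1$ and $\sigma_{\widehat{B}}\sigma_{\widehat{B}}^{-1}=\id$ reads $\cl{aS(b)\otimes 1}=\cl{a\otimes b}$. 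To obtain \eqref{eq:super} I would compute modulo $\widehat{B}B^+$
\[
a_1S(ba_2)\otimes 1\equiv a_1\otimes ba_2=(1\otimes b)\cdot a\equiv\varepsilon(a)(1\otimes b)\equiv\varepsilon(a)\,S(b)\otimes 1,
\]
and since both ends are coinvariant, the injectivity of $\sigma_{\widehat{B}}$ (equivalently $\coinv{\widehat{B}}{B}\cap\widehat{B}B^+=0$, by Lemma \ref{lemma:sigmainv}) upgrades this congruence to the equality \eqref{eq:super}.

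The main obstacle is \eqref{eq:super2}: it encodes $\Delta\circ S$ and cannot be read off from $\widehat{B}$ alone, where every coinvariant sits in a single tensor leg. To capture it I would introduce the auxiliary Hopf module $N:=B\otimes B\otimes B$ with coaction on the last factor and diagonal action $(a\otimes b\otimes c)\cdot d=ad_1\otimes bd_2\otimes cd_3$, whose coinvariants are $B\otimes B\otimes\K 1$. Arguing as in Lemma \ref{lemma:sigmalinear}, $\sigma_N^{-1}$ is left $(B\otimes B)$-linear, hence determined by $T(z):=\sigma_N^{-1}(\cl{1\otimes 1\otimes z})=T(z)'\otimes T(z)''\otimes 1$ through $\sigma_N^{-1}(\cl{x\otimes y\otimes z})=xT(z)'\otimes yT(z)''\otimes 1$. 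Both $\Delta\otimes\id_B$ and $\id_B\otimes\Delta$ are morphisms of Hopf modules $\widehat{B}\to N$; naturality of $\sigma^{-1}$ along the first, evaluated at $a=1$, forces $T(z)=\Delta(S(z))$, while naturality along the second gives $\sigma_N^{-1}(\cl{1\otimes b_1\otimes b_2})=S(b)\otimes 1\otimes 1$. Comparing the two descriptions yields exactly $S(b_2)_1\otimes b_1S(b_2)_2=S(b)\otimes 1$, that is \eqref{eq:super2}. The delicate points will be checking that $\Delta\otimes\id_B$ and $\id_B\otimes\Delta$ are genuinely linear for the diagonal action on $N$, and the Sweedler-index bookkeeping when \eqref{eq:super2} is invoked under a spectator comodule leg in the sufficiency part.
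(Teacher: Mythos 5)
Your proof is correct. The sufficiency half is essentially the paper's own argument: the same candidate inverse $\cl{m}\mapsto m_0\cdot S(m_1)$ and the same four verifications (and the ``spectator leg'' point you flag is harmless: one applies the $\K$-linear map $(m_0\cdot(-))\otimes \id_B$ to identities holding in $B\otimes B$, exactly as the paper does implicitly). The necessity half, however, takes a genuinely different route. The paper first shows that naturality \emph{forces} the universal formula $\sigma_M^{-1}(\cl{m})=m_0\cdot S(m_1)$ for every $M\in\rhopf{B}$, with $S(b):=(B\otimes\varepsilon)\sigma_{\what{B}}^{-1}(\cl{1\otimes b})$; this is extracted from naturality of $\sigma^{-1}$ along the Hopf module morphisms $\delta_M\colon M\to \rmod{M}\otimes\rhopfmod{B}$ and $f_m\otimes B\colon \what{B}\to \rmod{M}\otimes\rhopfmod{B}$, where $f_m(b)=m\cdot b$. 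Identity \eqref{eq:super2} then drops out by evaluating this formula at $M=\what{B}$, $m=1\otimes b$, and comparing with coinvariance, while \eqref{eq:super} comes from left $B$-linearity much as in your text. You avoid the universal formula entirely and instead capture $\Delta\circ S$ by playing the two morphisms $\Delta\otimes\id_B,\ \id_B\otimes\Delta\colon \what{B}\to B\otimes B\otimes B$ against each other, after a $(B\otimes B)$-linearity lemma for $\sigma^{-1}$ on the three-fold module; I checked that $B\otimes B\otimes B$ with diagonal action and last-leg coaction is a Hopf module, that both coproduct maps are morphisms of Hopf modules, and that your two evaluations of $\sigma^{-1}_{B\otimes B\otimes B}(\cl{1\otimes b_1\otimes b_2})$ match, so the argument closes. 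The trade-off: the paper's route is more economical, since the formula forced by naturality is simultaneously the engine of the proof and the content of the final clause of the statement; your route stays inside concrete free modules and makes transparent which structure is responsible for which axiom (the right action for \eqref{eq:super}, the comparison of the two coproducts for \eqref{eq:super2}), at the price of the auxiliary module and its linearity lemma. Note finally that in your version the formula for $\sigma_M^{-1}$ is recovered only through the sufficiency direction and uniqueness of two-sided inverses, which indeed suffices for the ``In particular'' clause.
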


\begin{proof}
Observe that if we set $\sigma_M^{-1}(\cl{m}):=m_0\cdot S(m_1)$ for all $M\in \rhopf{B}$ and $m\in M$, then the following computations
\begin{gather*}
\sigma_M^{-1}(\cl{m\cdot b}) = m_0\cdot b_1S(m_1b_2) \stackrel{\eqref{eq:super}}{=} m_0\cdot S(m_1)\varepsilon(b), \\
\delta_M\left(\sigma_M^{-1}(\cl{m})\right) = m_0\cdot S(m_2)_1\otimes m_1S(m_2)_2 \stackrel{\eqref{eq:super2}}{=} m_0S(m_1)\otimes 1, \\
\sigma_M(\sigma_M^{-1}(\cl{m})) = \cl{m_0\cdot S(m_1)} = \cl{m_0}\varepsilon(S(m_1)) = \cl{m} \qquad \text{and} \\
\sigma_M^{-1}(\sigma_M(m')) = m'_0\cdot S(m_1') = m'\cdot S(1) = m',
\end{gather*}
for all $m\in M$, $m'\in\coinv{M}{B}$ and $b\in B$, imply that $\sigma_M^{-1}$ is well-defined and an inverse to $\sigma_M$. Thus we are left to prove the forward implication. Assume then that $\sigma$ is a natural isomorphism. Since $\delta_M:\rhopfmod{M}\to \rmod{M}\otimes \rhopfmod{B}$ is a morphism of Hopf modules, naturality of $\sigma^{-1}$ implies that
\[
\sigma_M^{-1} = (M\otimes \varepsilon)\circ \sigma_{M\otimes B}^{-1} \circ \cl{\delta_M}.
\]
In addition, since $f_m:\rmod{B}\to \rmod{M}, b\mapsto m\cdot b,$ is right $B$-linear for every $m\in M$, again naturality of $\sigma^{-1}$ implies that
\[
\sigma_{M\otimes B}^{-1}(\cl{m\otimes b}) = \sigma_{M\otimes B}^{-1}(\cl{f_m(1)\otimes b}) =\left(f_m\otimes B\right)\left(\sigma_{\what{B}}^{-1}(\cl{1\otimes b})\right)
\]
and hence
\begin{gather*}
\sigma_M^{-1}(\cl{m}) =  (M\otimes \varepsilon)\left( \sigma_{M\otimes B}^{-1}\left(\cl{m_0\otimes m_1}\right)\right) =  (M\otimes \varepsilon)\left(\left(f_{m_0}\otimes B\right)\left(\sigma_{\what{B}}^{-1}(\cl{1\otimes m_1})\right)\right) \\
= m_0\cdot (B\otimes \varepsilon)\sigma_{\what{B}}^{-1}(\cl{1\otimes m_1})
\end{gather*}
for all $m\in M$. Set $S(b) := (B\otimes \varepsilon)\sigma_{\what{B}}^{-1}(\cl{1\otimes b})$ for every $b\in B$, so that $\sigma_M^{-1}(\cl{m})=m_0\cdot S(m_1)$. Since $\sigma_{\what{B}}^{-1}(\cl{1\otimes b})\in\coinv{(B\cmptens{}B)}{B}$, 
\[
S(b)\otimes 1 \stackrel{\eqref{eq:coinvsimpl}}{=} \sigma_{\what{B}}^{-1}(\cl{1\otimes b}) {=} (1\otimes b)_0\cdot S\left((1\otimes b)_1\right) = S(b_2)_1\otimes b_1S(b_2)_2
\]
for all $b\in B$, which is \eqref{eq:super2}. Since 
\[
b\otimes 1 = \sigma_{\what{B}}^{-1}\left(\sigma_{\what{B}}(b\otimes 1)\right) = \sigma_{\what{B}}^{-1}\left(\cl{b\otimes 1}\right),
\]
we get that $1 = S(1)$ by considering $b=1$ and applying $B\otimes \varepsilon$ to both sides. Moreover, since $\sigma _{\widehat{B}}^{-1}$ is $B$-linear with respect to the actions of Lemma \ref{lemma:sigmalinear}, a direct computation shows that
\begin{align*}
a_1S(ba_2) & = a_{1}\left( B\otimes \varepsilon \right)\left( \sigma _{\widehat{B} }^{-1}\left( \overline{1\otimes ba_{2}}\right) \right) =\left( B\otimes \varepsilon \right) \left( \left( a_{1}\otimes 1\right) \left( \sigma _{\widehat{B}}^{-1}\left( \overline{1\otimes ba_{2}}\right) \right) \right) \\
&=\left( B\otimes \varepsilon \right) \left( a_{1}\cdot \left( \sigma _{\widehat{B}}^{-1}\left( \overline{1\otimes ba_{2}}\right) \right) \right) =\left( B\otimes \varepsilon \right) \left( \sigma _{\widehat{B}}^{-1}\left( \overline{a_{1}\otimes ba_{2}}\right) \right) \\
&=\left( B\otimes \varepsilon \right) \left( \sigma _{\widehat{B}}^{-1}\left( \overline{\left( 1\otimes b\right) \cdot a}\right) \right) =\varepsilon \left( a\right) \left( B\otimes \varepsilon \right) \left(\sigma _{\widehat{B}}^{-1}\left( \overline{\left( 1\otimes b\right) }\right)\right) =\varepsilon \left( a\right) S\left( b\right),
\end{align*}
which is \eqref{eq:super}. From these relations we can conclude also that
\[
\varepsilon \left( S\left( b\right) \right) =\varepsilon \left( b_{1}\right) \varepsilon \left( S\left( b_{2}\right) \right) =\varepsilon \left( b_{1}S\left( b_{2}\right) \right) =\varepsilon \left( \varepsilon \left( b\right)S(1) \right) =\varepsilon \left( b\right)
\]
for every $b\in B$ and this concludes the proof.
\end{proof}

\begin{proposition}\label{prop:boom}
If $\sigma_{\what{B}}$ is invertible, then the $\K$-linear endomorphism $S$ of $B$ given by $S(b):=\left(B\otimes \varepsilon\right)\left(\sigma_{\what{B}}^{-1}\left(\cl{1\otimes b}\right)\right)$ for all $b\in B$ is an anti-bialgebra morphism satisfying \eqref{eq:super}.
\end{proposition}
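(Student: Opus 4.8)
The plan is to extract the algebraic structure of $S$ one layer at a time from the single isomorphism $\sigma_{\what{B}}$, using injectivity to promote congruences modulo $\what{B}B^{+}$ to genuine identities in $B$. First I would determine $\sigma_{\what{B}}^{-1}$ explicitly: since $\sigma_{\what{B}}^{-1}(\cl{1\otimes b})$ lies in $\coinv{\what{B}}{B}$ it equals $S(b)\otimes 1$ by \eqref{eq:coinvsimpl}, and left $B$-linearity of $\sigma_{\what{B}}^{-1}$ (Lemma \ref{lemma:sigmalinear}) then gives $\sigma_{\what{B}}^{-1}(\cl{a\otimes b})=aS(b)\otimes 1$ for all $a,b$. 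Reading off $\sigma_{\what{B}}\circ\sigma_{\what{B}}^{-1}=\id$ yields $\cl{aS(b)\otimes 1}=\cl{a\otimes b}$, and, exactly as in the proof of Proposition \ref{prop:sigmainv} but invoking only invertibility at $\what{B}$ together with \eqref{eq:leftaction}, this produces \eqref{eq:super}; its $b=1$ instance is the right-antipode identity $b_{1}S(b_{2})=\varepsilon(b)1$, while $S(1)=1$ and $\varepsilon\circ S=\varepsilon$ follow as there. The decisive extra information carried by invertibility is injectivity, i.e.\ $\coinv{\what{B}}{B}\cap\what{B}B^{+}=0$: an element $x\otimes 1$ that vanishes modulo $\what{B}B^{+}$ is already zero. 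This is the tool I will use repeatedly to turn congruences into equalities.

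The heart of the matter is anti-(co)multiplicativity, and here the classical route is blocked: for a two-sided antipode one deduces $S(ab)=S(b)S(a)$ and $\Delta\circ S=(S\otimes S)\circ\tau\circ\Delta$ from uniqueness of convolution inverses of the multiplication and of $\Delta$, but with only a right antipode both $S\circ m$ and $(S\otimes S)\circ\tau$ are merely \emph{right} convolution inverses of the multiplication, so they need not coincide. The substitute is the coaction identity \eqref{eq:super2}, which I would prove first. Observing that the right $B$-action on $\what{B}$ is right multiplication by $\Delta(-)$ inside the algebra $B\otimes B$, one may read \eqref{eq:super2} as $S(b)\otimes 1=(1\otimes b_{1})\Delta(S(b_{2}))$; once it is available, multiplying the instances for $a$ and for $b$ in $B\otimes B$ and comparing with the instance for $ab$ should give anti-multiplicativity, while contracting \eqref{eq:super2} against the right-antipode identity should give $\Delta S(b)=S(b_{2})\otimes S(b_{1})$. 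In each case the outcome is an equality of elements of the form $y\otimes 1$ whose classes in $\inv{\what{B}}{B}$ already agree, so a final appeal to injectivity of $\sigma_{\what{B}}$ closes the argument; the residual verification that $S$ satisfies \eqref{eq:super} is then immediate.

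To establish \eqref{eq:super2} itself I would argue that $(1\otimes b_{1})\cdot S(b_{2})$ is the correct coinvariant lift of $\cl{1\otimes b}$: it has class $\cl{1\otimes b}$ in $\inv{\what{B}}{B}$, immediately from $\varepsilon\circ S=\varepsilon$ and $\cl{m\cdot c}=\varepsilon(c)\cl{m}$, and \emph{if} it lies in $\coinv{\what{B}}{B}$ then, since $\sigma_{\what{B}}(S(b)\otimes 1)=\cl{1\otimes b}$ as well, injectivity forces $(1\otimes b_{1})S(b_{2})=S(b)\otimes 1$. The step I expect to be the genuine obstacle is precisely this coinvariance: expanding $\delta\big((1\otimes b_{1})S(b_{2})\big)$ brings in $\Delta^{(2)}(S(b_{2}))$, so a direct Sweedler computation only reproduces the anti-comultiplicativity it is meant to yield. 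This is exactly the point at which the absence of a left antipode makes itself felt, and where the full strength of the isomorphism $\sigma_{\what{B}}$—rather than merely its consequence \eqref{eq:super}—has to be fed back in; surmounting it unlocks \eqref{eq:super2}, after which anti-multiplicativity, anti-comultiplicativity, and the compatibility with \eqref{eq:super} become comparatively routine checks closed off by injectivity.
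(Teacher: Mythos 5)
Your preliminary layer is sound and matches the paper: invertibility of $\sigma_{\what{B}}$ alone yields $\sigma_{\what{B}}^{-1}(\cl{1\otimes b})=S(b)\otimes 1$ via \eqref{eq:coinvsimpl}, then $\sigma_{\what{B}}^{-1}(\cl{a\otimes b})=aS(b)\otimes 1$ via Lemma \ref{lemma:sigmalinear}, and from these \eqref{eq:super}, $S(1)=1$ and $\varepsilon\circ S=\varepsilon$; your injectivity principle (elements of the form $x\otimes 1$ are automatically coinvariant, so two of them whose classes agree in $\inv{B\cmptens{} B}{B}$ are equal) is an equivalent reformulation of the paper's device of applying the left $B$-linear map $\sigma_{\what{B}}^{-1}$ to classes. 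The gap is that the pivot of your plan is never carried out: you propose to establish \eqref{eq:super2} first and to derive anti-multiplicativity and anti-comultiplicativity from it, but your proof of \eqref{eq:super2} requires showing that $(1\otimes b_1)\cdot S(b_2)$ is coinvariant, and you yourself observe that this computation needs $\Delta(S(b_2))$, i.e.\ exactly the anti-comultiplicativity you are trying to prove. You then assert that ``surmounting'' this obstacle unlocks everything, but no mechanism for surmounting it is given. This is not a presentational lacuna: in Proposition \ref{prop:sigmainv}, \eqref{eq:super2} is extracted from \emph{naturality} of $\sigma^{-1}$ (the formula $\sigma_M^{-1}(\cl{m})=m_0\cdot S(m_1)$ for all Hopf modules $M$), and that is precisely the tool you do not have when only the single component $\sigma_{\what{B}}$ is assumed invertible.

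The paper escapes the circle by reversing your order of deduction: \eqref{eq:super2} is nowhere used in the proof of Proposition \ref{prop:boom}; it is recovered only afterwards (in Theorem \ref{thm:sigmainv2}) as a consequence of anti-(co)multiplicativity together with the right antipode identity. Concretely, anti-multiplicativity is obtained from the left $B\otimes B$-action $(a\otimes b)\triangleright\cl{x\otimes y}=\cl{ax\otimes by}$ on $\inv{B\cmptens{} B}{B}$ --- well defined because left multiplication commutes with the right Hopf-module action, which is right multiplication by $\Delta(-)$ --- via
\[
S(b)S(a)\otimes 1=\sigma_{\what{B}}^{-1}\left(\cl{S(b)\otimes a}\right)=\sigma_{\what{B}}^{-1}\left((1\otimes a)\triangleright\cl{1\otimes b}\right)=\sigma_{\what{B}}^{-1}\left(\cl{1\otimes ab}\right)=S(ab)\otimes 1,
\]
using $\cl{S(b)\otimes 1}=\cl{1\otimes b}$; note that this needs neither \eqref{eq:super2} nor your ``multiplication of instances'' (which, as stated, would anyway founder on the fact that $(1\otimes a_1)$ does not commute past $\Delta(S(b_2))$ in $B\otimes B$). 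Anti-comultiplicativity requires a further construction: the map $\lambda(a\otimes b)=a_1S(b_2)\otimes\cl{a_2S(b_1)\otimes 1}$ annihilates $(B\otimes B)B^{+}$ \emph{because of} \eqref{eq:super}, hence descends to a map $\delta_{\mathsf{bar}}$ on $\inv{B\cmptens{} B}{B}$, which is then identified with $(B\otimes\pi)\circ(\Delta\otimes B)\circ\sigma_{\what{B}}^{-1}$ by checking on classes $\cl{x\otimes 1}$; evaluating both descriptions at $\cl{1\otimes b}$ and applying $B\otimes\sigma_{\what{B}}^{-1}$ gives $S(b_2)\otimes S(b_1)=\Delta(S(b))$. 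Without these (or equivalent) devices, your outline cannot be completed as written.
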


\begin{proof}
A closer inspection of the proof of Proposition \ref{prop:sigmainv} reveals that relations $\sigma_{\what{B}}^{-1}\left(\cl{1\otimes b}\right)=S(b)\otimes 1$, $S(1)=1$, \eqref{eq:super} and $\varepsilon\circ S=\varepsilon$ follow already from the invertibility of $\sigma_{\what{B}}$ alone. Moreover, the left $B$-linearity of $\sigma_{\what{B}}^{-1}$ with respect to the actions of Lemma \ref{lemma:sigmalinear} imply that
\begin{gather}
\sigma _{\widehat{B}}^{-1}\left( \overline{a\otimes b}\right) = a\cdot \sigma _{\widehat{B}}^{-1}\left( \overline{1\otimes b} \right) = aS\left( b\right) \otimes 1 \qquad\text{and so}  \label{eq:sigmainvsmallS} \\
\overline{aS\left( b\right) \otimes 1} = \sigma _{\widehat{B}}\left( aS\left( b\right) \otimes 1\right) \overset{\eqref{eq:sigmainvsmallS}}{=}\sigma _{\widehat{B}}\left( \sigma _{\widehat{B}}^{-1}\left( \overline{a\otimes b}\right) \right) =\overline{a\otimes b}  \label{eq:sigmainvsmallS2}
\end{gather}
for every $a,b\in B$. Now, consider the $\K$-module $\inv{B\cmptens{} B}{B}$ as endowed with the $B\otimes B$-module structure given by $\left( a\otimes b\right) \triangleright \left( \overline{x\otimes y}\right) = \overline{ax\otimes by}$ for $a,b,x,y\in B$. Then we have that
\begin{gather*}
S\left( b\right) S\left( a\right) \otimes 1 \overset{\eqref{eq:sigmainvsmallS}}{=}\sigma _{\widehat{B}}^{-1}\left( \overline{S\left( b\right) \otimes a} \right) = \sigma _{\widehat{B}}^{-1}\left( \left( 1\otimes a\right) \triangleright
\left( \overline{S\left( b\right) \otimes 1}\right) \right) \\
\overset{\eqref{eq:sigmainvsmallS2}}{=}\sigma _{\widehat{B}}^{-1}\left( \left( 1\otimes a\right) \triangleright \left( \overline{1\otimes b}\right) \right) = \sigma _{\widehat{B}}^{-1}\left( \overline{1\otimes ab}\right) \stackrel{\eqref{eq:sigmainvsmallS}}{=} S\left( ab\right) \otimes 1
\end{gather*}
and so $S\left( b\right) S\left( a\right) =S\left( ab\right) $. Concerning anti-comultiplicativity, consider the map $\lambda :B\otimes B\rightarrow B\otimes \inv{B\cmptens{} B}{B}$ given by 
\begin{equation*}
\lambda \left( a\otimes b\right):=  a_{1}S\left( b_{2}\right) \otimes \overline{a_{2}S\left( b_{1}\right) \otimes 1} \qquad \text{for all }a,b\in B
\end{equation*}
and pick $x_{i} z_{{i}_{1}}\otimes y_{i}z_{{i}_{2}}\in \left( B\otimes B\right) B^{+}$ (summation understood). We have that
\begin{gather*}
\lambda \left( x_{i}z_{{i}_{1}}\otimes y_{i}z_{{i}_{2}}\right) =\left( x_{i}z_{{i}_{1}}\right) _{1}S\left( \left( y_{i}z_{{i}_{2}}\right) _{2}\right) \otimes \overline{\left( x_{i}z_{{i}_{1}}\right) _{2}S\left( \left( y_{i}z_{{i}_{2}}\right) _{1}\right) \otimes 1} \\
=x_{{i}_{1}}z_{{i}_{1}}S\left( y_{{i}_{2}}z_{{i}_{4}}\right) \otimes \overline{x_{{i}_{2}}z_{{i}_{2}}S\left( y_{{i}_{1}} z_{{i}_{3}}\right) \otimes 1} \overset{\eqref{eq:super}}{=}x_{{i}_{1}}z_{{i}_{1}}S\left( y_{{i}_{2}}z_{{i}_{2}}\right) \otimes \overline{x_{{i}_{2}}S\left( y_{{i}_{1}}\right) \otimes 1} \\
\overset{\eqref{eq:super}}{=}\left( x_{{i}_{1}}S\left( y_{{i}_{2}}\right) \otimes \overline{x_{{i}_{2}}S\left( y_{{i}_{1}}\right) \otimes 1}\right) \varepsilon \left( z_{i}\right) =0
\end{gather*}
and so $\lambda $ factors through the quotient, giving a linear morphism $\inv{B\cmptens{} B}{B}\rightarrow B\otimes \inv{B\cmptens{} B}{B}$ that we denote by $\delta_{\mathsf{bar}} $. Set $\pi:B\cmptens{}B\to \inv{B\cmptens{}B}{B}$. From the following computation
\begin{equation*}
\delta_{\mathsf{bar}}  \left( \sigma _{\widehat{B}}\left( x\otimes 1\right) \right)=\delta_{\mathsf{bar}}  \left( \overline{x\otimes 1}\right) =x_{1}\otimes \overline{x_{2}\otimes 1}=\left( B\otimes \pi\right) \left( (\Delta\otimes B)\left( x\otimes 1\right) \right)
\end{equation*}
it follows that $\delta_{\mathsf{bar}}  =\left( B\otimes \pi\right)\circ (\Delta\otimes B) \circ \sigma _{\widehat{B}}^{-1}$. Thus
\[
S\left(b_2\right)\otimes\cl{S\left(b_1\right)\otimes 1} = \delta_{\mathsf{bar}}\left(\cl{1\otimes b}\right) = \left(\left( B\otimes \pi\right)\circ (\Delta\otimes B) \circ \sigma _{\widehat{B}}^{-1}\right)\left(\cl{1\otimes b}\right) = S(b)_1\otimes \cl{S(b)_2\otimes 1}
\]
for all $b\in B$. By applying $B\otimes \sigma_{\what{B}}^{-1}$ to both sides of this relation we conclude that
\begin{equation*}
S\left( b_{2}\right) \otimes S\left( b_{1}\right) =S\left( b\right) _{1}\otimes S\left( b\right) _{2}
\end{equation*}
for all $b\in B$.
\end{proof}

\begin{remark}
The interested reader may check that $\delta_{\mathsf{bar}}$ used in the foregoing proof make of $\inv{B\cmptens{} B}{B}$ a left $B$-comodule in such a way that $\sigma _{\widehat{B}}$ is left colinear, where on $\left( B\cmptens{} B\right) ^{\mathrm{co}B}$ one considers the coaction $\delta_{\mathsf{co}}\left( b\otimes 1\right) := b_{1}\otimes \left(b_{2}\otimes 1\right)$ for all $b\in B$.
\end{remark}

Recall that the module $\End{}{B}$ of $\K$-linear endomorphisms of a bialgebra $(B,m,u,\Delta, \varepsilon)$ is an algebra in a natural way: the unit is $u\circ \varepsilon$ and the multiplication is given by the \emph{convolution product} $f*g := m \circ (f\otimes g) \circ \Delta$.
A left (resp.~right) convolution inverse of the identity is called \emph{left} (resp.~\emph{right}) \emph{antipode} and if it exists then $B$ is called \emph{left} (resp.~\emph{right}) \emph{Hopf algebra} (see \cite{GreenNicholsTaft}). Summing up, the following is the first main result of the paper.

\begin{theorem}\label{thm:sigmainv2}
The following are equivalent for a bialgebra $B$
\begin{enumerate}[label=(\arabic*), ref=\emph{(\arabic*)}, leftmargin=1cm, labelsep=0.3cm]
\item\label{item:sigmainv2-1} $\sigma$ is a natural isomorphism;
\item\label{item:sigmainv2-1bis} for every $M\in\rhopf{B}$, $M\cong \coinv{M}{B}\oplus MB^+$ as a $\K$-module;
\item \label{item:sigmainv2-2} $\sigma_{\what{B}}$ is invertible;
\item\label{item:sigmainv2-3} $B$ is a right Hopf algebra with anti-(co)multiplicative right antipode $S$.
\end{enumerate}
In particular, if anyone of the above conditions holds, then $\sigma_M^{-1}(\cl{m})=m_0\cdot S(m_1)$ for all $M\in \rhopf{B}$ and $m\in M$.
\end{theorem}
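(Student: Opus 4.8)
The plan is to prove $(1)\Leftrightarrow(2)$ directly and then close the cycle $(1)\Rightarrow(3)\Rightarrow(4)\Rightarrow(1)$, letting Propositions \ref{prop:sigmainv} and \ref{prop:boom} carry the analytic weight. The equivalence $(1)\Leftrightarrow(2)$ is immediate: $\sigma$ is natural by construction, so it is a natural isomorphism precisely when every component $\sigma_M$ is bijective, and Lemma \ref{lemma:sigmainv} rephrases bijectivity of $\sigma_M$ as the $\K$-linear splitting $M\cong\coinv{M}{B}\oplus MB^+$. The implication $(1)\Rightarrow(3)$ is trivial, as $\sigma_{\what{B}}$ is a single component of $\sigma$.

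For $(3)\Rightarrow(4)$ I would invoke Proposition \ref{prop:boom}: invertibility of $\sigma_{\what{B}}$ furnishes the map $S(b)=(B\otimes\varepsilon)\sigma_{\what{B}}^{-1}(\cl{1\otimes b})$, which is an anti-bialgebra endomorphism (so in particular $S(1)=1$ and $S$ is both anti-multiplicative and anti-comultiplicative) satisfying \eqref{eq:super}. It then suffices to observe that $S$ is a \emph{right} antipode: specializing \eqref{eq:super} to $b=1$ and using $S(1)=1$ gives $a_1S(a_2)=\varepsilon(a)1$, that is $\id*S=u\varepsilon$. Hence $B$ is a right Hopf algebra whose right antipode is anti-(co)multiplicative.

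The heart of the argument is $(4)\Rightarrow(1)$, which I would reduce to verifying the hypotheses of Proposition \ref{prop:sigmainv} for the given right antipode $S$; that proposition then delivers both the natural isomorphism $\sigma$ and the formula $\sigma_M^{-1}(\cl{m})=m_0\cdot S(m_1)$, settling as well the final ``in particular'' clause. Starting from the right antipode identity $b_1S(b_2)=\varepsilon(b)1$, evaluation at $b=1$ gives $S(1)=1$, and applying $\varepsilon$ yields $\varepsilon\circ S=\varepsilon$; identity \eqref{eq:super} follows from anti-multiplicativity $S(ba_2)=S(a_2)S(b)$ combined with $a_1S(a_2)=\varepsilon(a)1$.

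The delicate step, and the one I expect to be the main obstacle, is \eqref{eq:super2}, since it is the only place where all three structural hypotheses on $S$ must be used in concert. Here I would use anti-comultiplicativity to rewrite $\Delta(S(b_2))=S(b_3)\otimes S(b_2)$, so that the right-hand side of \eqref{eq:super2} becomes $S(b_3)\otimes b_1S(b_2)$; reindexing the threefold coproduct by coassociativity so that two adjacent legs are acted upon by the right antipode identity collapses them into a counit factor, leaving exactly $S(b)\otimes 1$. The Sweedler-calculus bookkeeping in this collapse is where care is needed, but it is purely formal once the regrouping is chosen correctly. This completes the cycle and, together with $(1)\Leftrightarrow(2)$, establishes the equivalence of all four conditions.
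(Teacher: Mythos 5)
Your proof is correct and follows essentially the same route as the paper: \ref{item:sigmainv2-1}$\Leftrightarrow$\ref{item:sigmainv2-1bis} by Lemma \ref{lemma:sigmainv}, \ref{item:sigmainv2-1}$\Rightarrow$\ref{item:sigmainv2-2} trivially, \ref{item:sigmainv2-2}$\Rightarrow$\ref{item:sigmainv2-3} via Proposition \ref{prop:boom}, and \ref{item:sigmainv2-3}$\Rightarrow$\ref{item:sigmainv2-1} by verifying the hypotheses of Proposition \ref{prop:sigmainv}. Your explicit Sweedler checks of \eqref{eq:super} and \eqref{eq:super2}, as well as the observation that the map produced by Proposition \ref{prop:boom} is a genuine right antipode (take $b=1$ in \eqref{eq:super}), correctly supply details that the paper only asserts.
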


\begin{proof}
The equivalence \ref{item:sigmainv2-1} $\Leftrightarrow$ \ref{item:sigmainv2-1bis} is the content of Lemma \ref{lemma:sigmainv}. Clearly, \ref{item:sigmainv2-1} implies \ref{item:sigmainv2-2}, which in turn implies \ref{item:sigmainv2-3} in light of Proposition \ref{prop:boom}. Moreover, if $B$ admits an anti-(co)multiplicative right antipode $S$, then the conditions \eqref{eq:super} and \eqref{eq:super2} are satisfied and hence \ref{item:sigmainv2-3} implies \ref{item:sigmainv2-1} by Proposition \ref{prop:sigmainv}. 
\end{proof}

\begin{remark}\label{rem:unitcounit}
Observe that the condition $\id _{B}\ast S=u\circ\varepsilon $ implies that $S\left( 1\right) =1S\left( 1\right)=u\varepsilon \left( 1\right) =1$ and that $\varepsilon \left( S\left(a\right) \right) =\varepsilon \left( a_{1}S\left( a_{2}\right) \right) =\varepsilon \left( u\varepsilon \left( a\right) \right) =\varepsilon \left(a\right) $ for every $a\in B$. Thus every right antipode is automatically unital and counital (analogously for left antipodes).
\end{remark}

\begin{example}[{\cite[Example 21]{GreenNicholsTaft}}]\label{ex:bello}
Let $\K$ be a field and consider the free algebra
\begin{equation*}
T:=\K\left\langle e_{ij}^{(k)} \mid 1\leq i,j\leq n, k\geq 0 \right\rangle
\end{equation*}
with bialgebra structure uniquely determined by
\begin{equation*}
\Delta \left( e_{ij}^{(k)}\right) = \sum_{h=1}^{n}e_{ih}^{(k)}\otimes e_{hj}^{(k)} \quad \text{and} \quad \varepsilon \left( e_{ij}^{(k)}\right) =\delta _{ij}
\end{equation*}
for all $1\leq i,j\leq n, k\geq 0$ and the assignment $s:= e_{ij}^{(k)} \mapsto e_{ji}^{(k+1)}$ for all $i,j,k$. The ideal $K\subseteq T$ generated by
\begin{equation*}
\left\{
\left. 
{\displaystyle \sum_{h=1}^{n} e_{hi}^{(k+1)}e_{hj}^{(k)} - \delta_{ij}1,} \quad {\displaystyle \sum_{h=1}^{n} e_{ih}^{(l)}e_{jh}^{(l+1)} - \delta_{ij}1} \ \right| \  k\geq 1,\ l\geq 0 ,\ 1\leq i,j\leq n
\right\}
\end{equation*}
is an $s$-stable bi-ideal and the bialgebra anti-endomorphism $S$ induced by $s$ is a right antipode but not a left one. Thus $(H,S)$ is a genuine right Hopf algebra satisfying the conditions of Theorem \ref{thm:sigmainv2}.
\end{example}

\begin{example}\label{ex:brutto}
In \cite[\S3]{NicholsTaft}, the authors exhibit a left Hopf algebra such that no left antipode is a bialgebra anti-endomorphism. Therefore, the requirements that the one-sided antipodes are either anti-comultiplicative or anti-multiplicative cannot be avoided, as in general a one-sided antipode can be neither.
\end{example}

Recall, from \cite{CaenepeelMilitaruZhu} for example, that a bimodule ${_SP_R}$ is a \emph{Frobenius bimodule} if and only if $(-\tensor{S}P,\Hom{}{}{R}{}{P}{-}):\M_S\to \M_R$ is a Frobenius pair of functors.
 
\begin{corollary}
Let $B$ be a bialgebra which is finitely generated and projective as a $\K$-module and denote by $B^c$ the co-opposite bialgebra (same algebra structure, co-opposite coalgebra structure). Then $B$ is a Hopf algebra if and only if ${_\K B_{B\#\rdual{B^c}}}$ is a Frobenius $(\K,B\#\rdual{B^c})$-bimodule.
\end{corollary}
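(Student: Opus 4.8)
The plan is to re-express the Frobenius-bimodule condition as the Frobenius property of the free Hopf module functor, and then to read off the conclusion from Theorem \ref{thm:sigmainv2} together with the finiteness hypothesis. First I would recall the description of Hopf modules as modules over a smash product. Write $R=B\#\rdual{B^c}$. Since $B$ is finitely generated and projective over $\K$, a right $B$-comodule $(M,\delta)$ is the same as a right $\rdual{B^c}$-module via $m\cdot g=m_0\,g(m_1)$; the co-opposite coalgebra enters precisely because passing from the natural left $\rdual{B}$-action to a right action replaces the convolution product by its opposite, and $\rdual{B^c}=(\rdual{B})^{\mathrm{op}}$ as algebras. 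The Hopf-module compatibility $\delta(m\cdot b)=m_0b_1\otimes m_1b_2$ then translates into the defining relation of the smash product, yielding an isomorphism of categories $\Phi:\rhopf{B}\xrightarrow{\sim}\Rmod{R}$. Under $\Phi$ the regular Hopf module $B$ becomes exactly the $(\K,R)$-bimodule ${_\K B_R}$; more generally, since the structures \eqref{eq:TrivHopfMod} on $V\otimes B$ act only on the $B$-tensorand, $\Phi(V\otimes B)\cong V\tensor{\K}B$ naturally in $V$. Hence the free Hopf module functor $-\otimes B:\M\to\rhopf{B}$ corresponds through $\Phi$ to $-\tensor{\K}{_\K B_R}:\M\to\Rmod{R}$.

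Next I would invoke the recalled dictionary for Frobenius bimodules: ${_\K B_R}$ is a Frobenius bimodule if and only if $(-\tensor{\K}B,\Hom{}{}{R}{}{B}{-})$ is a Frobenius pair, that is, if and only if $-\tensor{\K}B$ is a Frobenius functor. By the previous step this is equivalent to $-\otimes B:\M\to\rhopf{B}$ being Frobenius, and by Theorem \ref{thm:sigmainv2} the latter holds if and only if $\sigma$ is a natural isomorphism, if and only if $B$ is a right Hopf algebra with anti-(co)multiplicative right antipode.

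Finally I would close the gap between a one-sided and a two-sided Hopf structure using finiteness. A genuine Hopf algebra is in particular a right Hopf algebra whose antipode is an anti-bialgebra endomorphism and a right antipode, so one implication is immediate (equivalently, one may invoke Theorem \ref{thm:sigmainv2} directly from the existence of such an antipode). For the converse, since $B$ is finitely generated over $\K$, a right Hopf algebra with anti-(co)multiplicative right antipode is automatically a Hopf algebra by \cite{GreenNicholsTaft}. Chaining these equivalences yields that $B$ is a Hopf algebra if and only if ${_\K B_R}$ is a Frobenius bimodule.

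I expect the main obstacle to be the first step: pinning down the isomorphism $\rhopf{B}\cong\Rmod{R}$ with the correct left/right and (co)opposite conventions --- in particular justifying the appearance of $B^c$ --- and verifying that $-\otimes B$ matches $-\tensor{\K}B$ under it. As a sanity check, when $B$ is a finite-dimensional Hopf algebra one has $B\#\rdual{B^c}\cong\End{\K}{B}$, which is Morita equivalent to $\K$, consistently with the Structure Theorem equivalence $\rhopf{B}\simeq\M$. The remaining steps are a matter of assembling Theorem \ref{thm:sigmainv2}, the Frobenius-bimodule characterization, and the finiteness result of \cite{GreenNicholsTaft}.
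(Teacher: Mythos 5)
Your proposal is correct and follows essentially the same route as the paper: identify $\rhopf{B}$ with $\M_{B\#\rdual{B^c}}$ using finite generation and projectivity, match $-\otimes B$ with $-\otimes{_\K B_{B\#\rdual{B^c}}}$, and then combine the Frobenius-bimodule characterization with Theorem \ref{thm:sigmainv2} and the finiteness result \cite[Proposition 5]{GreenNicholsTaft}. The only difference is cosmetic: where you verify the smash-product identification by hand (comodules over the finitely generated projective coalgebra as $\rdual{B^c}$-modules, compatibility giving the smash relation), the paper obtains the same equivalence by passing through the category of Doi--Hopf modules ${^{B^c}\M(B^c)_B}$ and citing \cite[Remark 1.3(b)]{Doi}.
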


\begin{proof}
Note that $B$ is a left $B^c$-comodule algebra, so that we can consider the category ${^{B^c}\M(B^c)_B}$ of Doi-Hopf modules over $(B^c,B)$ in the sense of \cite{Doi}. Note also that $\M^B_B=\M(B)_B^B$ and that we have an equivalence of categories $\M(B)_B^B\cong {^{B^c}\M(B^c)_B}$. Since $B$ is finitely generated and projective, we also have an equivalence ${^{B^c}\M(B^c)_B}\cong \M_{B\#\rdual{B^c}}$ (see \cite[Remark 1.3(b)]{Doi}). Thus, $\M_B^B\cong \M_{B\#\rdual{B^c}}$ and we can identify $-\otimes B:\M\to\M_B^B$ with $-\otimes {_\K B_{B\#\rdual{B^c}}}:\M_\K\to \M_{B\#\rdual{B^c}}$ and $\Hom{}{}{B}{B}{B}{-}$ with $\Hom{}{}{B\#\rdual{B^c}}{}{{_\K B_{B\#\rdual{B^c}}}}{-}$. In this context, and in light of \cite[Proposition 5]{GreenNicholsTaft}, Theorem \ref{thm:sigmainv2} can be restated by saying that $B$ is a (right) Hopf algebra if and only if $-\otimes B:\M\to \M_{B\#\rdual{B^c}}$ is Frobenius, if and only if ${_\K B_{B\#\rdual{B^c}}}$ is a Frobenius $(\K,B\#\rdual{B^c})$-bimodule.
\end{proof}

As we have seen with Example \ref{ex:bello}, there exist one-sided Hopf algebras whose one-sided antipode is a bialgebra anti-endomorphism. As a consequence, the right antipode of Theorem \ref{thm:sigmainv2} will not be a left convolution inverse in general. In light of this, let us proceed along a different path. The left-handed analogue of the previous construction holds, in the sense that we have another adjoint triple
\begin{equation*}
\linv{\left( -\right)}{B} \ \dashv  \  B\otimes -  \ \dashv  \  \lcoinv{\left( -\right)}{B}
\end{equation*}
between the category of $\K$-modules $\M $ and the category of left Hopf modules ${_{B}^{B}\M}$, where 
\begin{equation*}
{ \linv{M}{B}} =\frac{M}{B^{+}M}, \qquad \lcoinv{M}{B} =\left\{ m\in M\mid \delta \left( m\right) =1\otimes m\right\}
\end{equation*}
and $B\otimes V$ has the left module and comodule structures induced by those of $B$. We will denote with $\eta',\epsilon',\gamma'$ and $\theta'$ the units and counits of these adjunctions, analogously to \eqref{eq:unitscounits}.

We are again in the framework of \S\ref{sec:adjtriples}, the canonical morphism now being $\varsigma :\lcoinv{M}{B} \rightarrow { \linv{M}{B} }, \ m\mapsto \overline{m}$.
As before, we may also consider the component of $\varsigma $ corresponding to the Hopf module $\check{B} = B \check{\otimes} B :={ _{\bullet }^{\bullet }B} \otimes { _{\bullet }B}$, that is to say,
\begin{equation*}
\varsigma _{\check{B}}: \lcoinv{\left( B\check{\otimes} B\right)}{B} \rightarrow { \linv{B\check{\otimes} B}{B}} ; \quad 1\otimes b\mapsto \overline{1\otimes b},
\end{equation*}
and by mimicking the arguments used to prove Proposition \ref{prop:sigmainv} and Proposition \ref{prop:boom} one can prove the following result.

\begin{theorem}\label{thm:CharLeftHopf}
The following are equivalent for a bialgebra $B$.
\begin{enumerate}[label=(\arabic*), ref=\emph{(\arabic*)}, leftmargin=1cm, labelsep=0.3cm]
\item\label{item:CharLeftHopf3} $\varsigma $ is a natural isomorphism;
\item For every $M\in\lhopf{B}$, $M\cong \lcoinv{M}{B}\oplus B^+M$ as a $\K$-module;
\item\label{item:CharLeftHopf4} $\varsigma _{\check{B}}$ is invertible;
\item\label{item:CharLeftHopf1} $B$ is a left Hopf algebra with anti-(co)multiplicative left antipode $S'$.
\end{enumerate}
In particular, if anyone of the above equivalent conditions holds, then $\varsigma_M^{-1}(\cl{m}) = S'(m_{-1})m_0$ for all $M\in\lhopf{B}$ and $m\in M$.
\end{theorem}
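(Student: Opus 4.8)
The plan is to exploit the manifest left--right symmetry of the construction in \S\ref{sec:HopfMod}: the entire chain of arguments that produces Theorem \ref{thm:sigmainv2} should transpose, replacing right Hopf modules by left ones, $\widehat{B}$ by $\check{B}$, $\sigma$ by $\varsigma$, and reversing the handedness of every product and coproduct. Concretely, I would close the same cycle of implications. First, the equivalence \ref{item:CharLeftHopf3} $\Leftrightarrow$ (2) is the verbatim left analogue of Lemma \ref{lemma:sigmainv}: $\varsigma_M$ is injective iff $\lcoinv{M}{B}\cap B^+M=0$ and surjective iff $M=\lcoinv{M}{B}+B^+M$, so it is bijective iff the canonical map $\lcoinv{M}{B}\oplus B^+M\to M$ is an isomorphism. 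The implication \ref{item:CharLeftHopf3} $\Rightarrow$ \ref{item:CharLeftHopf4} is trivial, being the restriction of a natural isomorphism to the single component $\check{B}$, and \ref{item:CharLeftHopf1} $\Rightarrow$ \ref{item:CharLeftHopf3} will follow from the left analogue of Proposition \ref{prop:sigmainv}, leaving \ref{item:CharLeftHopf4} $\Rightarrow$ \ref{item:CharLeftHopf1} (the analogue of Proposition \ref{prop:boom}) as the substantial step.

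For the bookkeeping, I would first record the mirror of Lemma \ref{lemma:sigmalinear}: since in $\check{B}={}^\bullet_\bullet B\otimes{}_\bullet B$ the coaction and the diagonal action are on the left while right multiplication on the second tensorand is free, both $\lcoinv{\check{B}}{B}$ and $\linv{\check{B}}{B}$ carry \emph{right} $B$-module structures for which $\varsigma_{\check{B}}$ is right $B$-linear. Coinvariance forces every element of $\lcoinv{\check{B}}{B}$ to be of the form $1\otimes x$ (the mirror of \eqref{eq:coinvsimpl}), so assuming $\varsigma_{\check{B}}$ invertible one may set
\begin{equation*}
S'(b):=(\varepsilon\otimes B)\left(\varsigma_{\check{B}}^{-1}\left(\overline{b\otimes 1}\right)\right), \qquad\text{so that}\qquad \varsigma_{\check{B}}^{-1}\left(\overline{b\otimes 1}\right)=1\otimes S'(b).
\end{equation*}
Exactly as in Proposition \ref{prop:boom}, invertibility of this single component already delivers $S'(1)=1$, $\varepsilon\circ S'=\varepsilon$, and the two structural identities mirroring \eqref{eq:super} and \eqref{eq:super2},
\begin{gather*}
S'(a_1b)\,a_2=\varepsilon(a)S'(b), \\
1\otimes S'(b)=S'(b_1)_1\,b_2\otimes S'(b_1)_2,
\end{gather*}
the first coming from right $B$-linearity of $\varsigma_{\check{B}}^{-1}$ together with $\overline{a_1b\otimes a_2}=\overline{a\cdot(b\otimes 1)}=\varepsilon(a)\,\overline{b\otimes 1}$ in $\linv{\check{B}}{B}$, the second from the coinvariance of $\varsigma_{\check{B}}^{-1}(\overline{b\otimes 1})$. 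Specialising the first at $b=1$ gives $S'(a_1)a_2=\varepsilon(a)1$, so $S'$ is a left antipode.

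The anti-multiplicativity and anti-comultiplicativity of $S'$ are then extracted precisely as in Proposition \ref{prop:boom}: $S'(ab)=S'(b)S'(a)$ follows by computing $\varsigma_{\check{B}}^{-1}(\overline{a\otimes b})$ in two ways through the right $B\otimes B$-action $(a\otimes b)\triangleright\overline{x\otimes y}=\overline{xa\otimes yb}$, whereas for anti-comultiplicativity one builds the mirror coaction $\delta_{\mathsf{bar}}$ on $\linv{\check{B}}{B}$, checks it descends to the quotient by means of the first identity above, identifies it with $(\pi\otimes B)\circ(B\otimes\Delta)\circ\varsigma_{\check{B}}^{-1}$, and finally inverts on the remaining tensorand to conclude $\Delta(S'(b))=S'(b_2)\otimes S'(b_1)$; this proves \ref{item:CharLeftHopf4} $\Rightarrow$ \ref{item:CharLeftHopf1}. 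For the converse \ref{item:CharLeftHopf1} $\Rightarrow$ \ref{item:CharLeftHopf3}, I would define $\varsigma_M^{-1}(\overline{m}):=S'(m_{-1})m_0$ and check it is well defined, lands in $\lcoinv{M}{B}$, and is two-sided inverse to $\varsigma_M$: anti-multiplicativity gives $S'(a_1b)a_2=S'(b)S'(a_1)a_2=\varepsilon(a)S'(b)$ and anti-comultiplicativity gives the coinvariance identity, so the four computations of Proposition \ref{prop:sigmainv} transpose verbatim. The step I expect to be genuinely delicate is the anti-comultiplicativity argument: as in Proposition \ref{prop:boom} it requires introducing $\delta_{\mathsf{bar}}$, verifying it factors through $\linv{\check{B}}{B}$, and only then inverting --- and it is here that the reversal of handedness makes the placement of tensor factors and Sweedler indices most error-prone.
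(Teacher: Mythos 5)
Your proposal is correct and is essentially the paper's own proof: the paper offers no independent argument for Theorem \ref{thm:CharLeftHopf}, stating only that it follows ``by mimicking the arguments used to prove Proposition \ref{prop:sigmainv} and Proposition \ref{prop:boom}'', and your transposition carries this out with the handedness right at every step (the definition $S'(b)=(\varepsilon\otimes B)\bigl(\varsigma_{\check{B}}^{-1}(\cl{b\otimes 1})\bigr)$, the mirrored structural identities, the right $B$-module structures on $\lcoinv{\check{B}}{B}$ and $\linv{\check{B}}{B}$, the $\delta_{\mathsf{bar}}$ argument, and the inverse $\varsigma_M^{-1}(\cl{m})=S'(m_{-1})m_0$). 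One small caveat: your claim that the mirror of \eqref{eq:super2} follows from invertibility of $\varsigma_{\check{B}}$ ``from the coinvariance'' of $\varsigma_{\check{B}}^{-1}(\cl{b\otimes 1})$ is not justified as stated---coinvariance only forces the form $1\otimes S'(b)$, which is why Proposition \ref{prop:boom} deliberately omits \eqref{eq:super2} from the consequences of bare invertibility---but this is harmless, since that identity is not needed for \ref{item:CharLeftHopf4}$\Rightarrow$\ref{item:CharLeftHopf1} and you correctly re-derive it from anti-comultiplicativity plus the left antipode property exactly where it is used, namely in \ref{item:CharLeftHopf1}$\Rightarrow$\ref{item:CharLeftHopf3}.
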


In light of Theorem \ref{thm:sigmainv2} and Theorem \ref{thm:CharLeftHopf} we may now draw the following conclusions.

\begin{theorem}\label{thm:superEquiv}
The following are equivalent for a bialgebra $B$.
\begin{enumerate}[label=(\arabic*), ref=\emph{(\arabic*)}, leftmargin=1cm, labelsep=0.3cm]
\item\label{item:hopf1} $B$ is a Hopf algebra,
\item\label{item:hopf2} the canonical morphisms $\sigma $ and $\varsigma $ are natural isomorphisms,
\item\label{item:hopf3} the distinguished components $\sigma _{\widehat{B}} $ and $\varsigma _{\check{B}}$ are isomorphisms,
\item\label{item:hopf6} $\sigma _{\widehat{B}} $ is an isomorphism and either $\theta_{\widehat{B}}$ is surjective or $\eta_{\widehat{B}}$ is injective,
\item\label{item:hopf9} $\varsigma _{\check{B}} $ is an isomorphism and either $\theta'_{\check{B}}$ is surjective or $\eta'_{\check{B}}$ is injective,
\item\label{item:hopf4} either the functor $\coinv{(-)}{B}$ or the functor $\inv{(-)}{B}$ is separable,
\item\label{item:hopf5} either $\theta$ admits a natural section or $\eta$ admits a natural retraction,
\item\label{item:hopf7} either the functor $\lcoinv{(-)}{B}$ or the functor $\linv{(-)}{B}$ is separable,
\item\label{item:hopf8} either $\theta'$ admits a natural section or $\eta'$ admits a natural retraction.
\end{enumerate}
\end{theorem}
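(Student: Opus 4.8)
The plan is to make condition \ref{item:hopf1} the hub to which every other condition is attached, and to treat the right-handed and left-handed statements in parallel, so that \ref{item:hopf9}, \ref{item:hopf7} and \ref{item:hopf8} follow from \ref{item:hopf6}, \ref{item:hopf4} and \ref{item:hopf5} by replacing the triple $\inv{(-)}{B}\dashv -\otimes B\dashv \coinv{(-)}{B}$ with $\linv{(-)}{B}\dashv B\otimes-\dashv \lcoinv{(-)}{B}$ and $\sigma,\eta,\theta,\widehat{B},S$ with $\varsigma,\eta',\theta',\check{B},S'$. I would first settle \ref{item:hopf1} $\Leftrightarrow$ \ref{item:hopf2} $\Leftrightarrow$ \ref{item:hopf3}. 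Here \ref{item:hopf2} $\Rightarrow$ \ref{item:hopf3} is trivial, whereas \ref{item:hopf3} $\Rightarrow$ \ref{item:hopf2} is the conjunction of the equivalence \ref{item:sigmainv2-2} $\Leftrightarrow$ \ref{item:sigmainv2-1} of Theorem \ref{thm:sigmainv2} with its left-handed twin \ref{item:CharLeftHopf4} $\Leftrightarrow$ \ref{item:CharLeftHopf3} of Theorem \ref{thm:CharLeftHopf}. For \ref{item:hopf1} $\Rightarrow$ \ref{item:hopf2} one notes that the antipode of a Hopf algebra is simultaneously a right and a left convolution inverse of $\id_B$ and an anti-bialgebra morphism, so both Theorem \ref{thm:sigmainv2} and Theorem \ref{thm:CharLeftHopf} apply. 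For \ref{item:hopf2} $\Rightarrow$ \ref{item:hopf1}, a natural isomorphism $\sigma$ furnishes a right antipode $S$ and a natural isomorphism $\varsigma$ a left antipode $S'$, and the convolution identity $S'=S'*(\id_B*S)=(S'*\id_B)*S=S$ forces $S=S'$ to be a genuine two-sided antipode.

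Next I would dispose of the separability blocks. As $-\otimes B$ is fully faithful, Proposition \ref{prop:equiv} shows that $-\otimes B$ is an equivalence if and only if its right adjoint $\coinv{(-)}{B}$ is separable, if and only if its left adjoint $\inv{(-)}{B}$ is separable; by the Structure Theorem of Hopf modules \cite{LarsonSweedler} the equivalence of $-\otimes B$ is exactly \ref{item:hopf1}, so \ref{item:hopf1} $\Leftrightarrow$ \ref{item:hopf4}. Rafael's Theorem then turns separability of the right adjoint $\coinv{(-)}{B}$ into the counit $\theta$ being a split epimorphism and separability of the left adjoint $\inv{(-)}{B}$ into the unit $\eta$ being a split monomorphism, which is precisely \ref{item:hopf5}; hence \ref{item:hopf4} $\Leftrightarrow$ \ref{item:hopf5}. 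Running the identical argument on the left-handed triple yields \ref{item:hopf1} $\Leftrightarrow$ \ref{item:hopf7} $\Leftrightarrow$ \ref{item:hopf8}.

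The crux, and the step I expect to resist, is \ref{item:hopf1} $\Leftrightarrow$ \ref{item:hopf6} (and symmetrically \ref{item:hopf1} $\Leftrightarrow$ \ref{item:hopf9}). The forward direction is painless: if $B$ is a Hopf algebra then $-\otimes B$ is an equivalence, so the counit $\theta_{\widehat{B}}$ is invertible, in particular surjective, while $\sigma_{\widehat{B}}$ is invertible by \ref{item:hopf1} $\Rightarrow$ \ref{item:hopf3}. For the converse the key device is relation \eqref{eq:Fsigma}, which at the component $\widehat{B}$ reads $\sigma_{\widehat{B}}\otimes B=\eta_{\widehat{B}}\circ\theta_{\widehat{B}}$. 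Invertibility of $\sigma_{\widehat{B}}$ thus makes $\eta_{\widehat{B}}\circ\theta_{\widehat{B}}$ an isomorphism, forcing $\theta_{\widehat{B}}$ to be a split monomorphism and $\eta_{\widehat{B}}$ a split epimorphism; adjoining either the surjectivity of $\theta_{\widehat{B}}$ or the injectivity of $\eta_{\widehat{B}}$ then upgrades $\theta_{\widehat{B}}$ to an isomorphism in both cases. The main obstacle is to extract the missing half of the antipode from this: since $\sigma_{\widehat{B}}$ is invertible, Proposition \ref{prop:boom} provides the anti-(co)multiplicative right antipode $S$ with $\sigma_{\widehat{B}}^{-1}(\cl{a\otimes b})=aS(b)\otimes 1$ by \eqref{eq:sigmainvsmallS}, so that a direct computation identifies $\theta_{\widehat{B}}^{-1}=(\sigma_{\widehat{B}}^{-1}\otimes B)\circ\eta_{\widehat{B}}$ with the map $a\otimes b\mapsto (aS(b_1)\otimes 1)\otimes b_2$. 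Evaluating $\theta_{\widehat{B}}\circ\theta_{\widehat{B}}^{-1}=\id$ at $1\otimes b$ gives $S(b_1)b_2\otimes b_3=1\otimes b$, and applying $B\otimes\varepsilon$ produces $S(b_1)b_2=\varepsilon(b)1$, so that $S$ is also a left convolution inverse of $\id_B$. Being both a left and a right antipode, $S$ is genuine and $B$ is a Hopf algebra, which proves \ref{item:hopf6} $\Rightarrow$ \ref{item:hopf1}; the left-handed equivalence \ref{item:hopf9} $\Leftrightarrow$ \ref{item:hopf1} is verbatim with $\varsigma,\eta',\theta',\check{B},S'$ substituted for $\sigma,\eta,\theta,\widehat{B},S$.
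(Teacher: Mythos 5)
Your proposal is correct, and its overall architecture coincides with the paper's: the same block $\ref{item:hopf1}\Leftrightarrow\ref{item:hopf2}\Leftrightarrow\ref{item:hopf3}$ via Theorems \ref{thm:sigmainv2} and \ref{thm:CharLeftHopf} and the convolution-monoid argument, the same treatment of the separability conditions via Proposition \ref{prop:equiv} and Rafael's theorem, and the same use of relation \eqref{eq:Fsigma} to handle \ref{item:hopf6} and \ref{item:hopf9}. Where you genuinely diverge is in the execution of the key implication $\ref{item:hopf6}\Rightarrow\ref{item:hopf1}$. The paper, after using \eqref{eq:Fsigma} to upgrade $\eta_{\widehat{B}}$ to an isomorphism, introduces a \emph{new} endomorphism $\nu(b):=(B\otimes\varepsilon)\left(\eta_{\widehat{B}}^{-1}\left(\overline{1\otimes b}\otimes 1\right)\right)$, determines the general form of $\eta_{\widehat{B}}^{-1}$ from its $B$-bilinearity and colinearity (an analysis parallel to the one carried out for $\sigma^{-1}$ in Proposition \ref{prop:sigmainv}), deduces that $\nu$ is a left convolution inverse of the identity, and only then concludes by the coincidence of one-sided inverses in the monoid $\left(\End{}{B},*\right)$. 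You instead observe that, once $\theta_{\widehat{B}}$ is invertible, its inverse is forced to be $(\sigma_{\widehat{B}}^{-1}\otimes B)\circ\eta_{\widehat{B}}$, and you feed in the formula \eqref{eq:sigmainvsmallS} for $\sigma_{\widehat{B}}^{-1}$ already available from Proposition \ref{prop:boom} and Lemma \ref{lemma:sigmalinear}; evaluating $\theta_{\widehat{B}}\circ\theta_{\widehat{B}}^{-1}=\id$ at $1\otimes b$ then yields $S(b_1)b_2=\varepsilon(b)1$ for the \emph{same} right antipode $S$. This variant buys two things: it recycles the computation of $\sigma_{\widehat{B}}^{-1}$ instead of redoing a linearity/colinearity analysis for $\eta_{\widehat{B}}^{-1}$, and it exhibits $S$ itself as two-sided, so the monoid-coincidence step becomes superfluous. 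What the paper's version buys in exchange is independence from the explicit description of $\sigma_{\widehat{B}}^{-1}$: it needs only the invertibility of $\eta_{\widehat{B}}$ and the mere existence of a right convolution inverse. Both arguments are complete and correct.
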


\begin{proof}
After recalling that $B$ is a Hopf algebra if and only if $-\otimes B$ is an equivalence, $\ref{item:hopf1}\Leftrightarrow\ref{item:hopf4}\Leftrightarrow\ref{item:hopf5}\Leftrightarrow\ref{item:hopf7}\Leftrightarrow\ref{item:hopf8}$ by Proposition \ref{prop:equiv} and the chain of implications \ref{item:hopf1} $\Rightarrow$ \ref{item:hopf2} $\Rightarrow$ \ref{item:hopf3} is clear. To go from \ref{item:hopf3} to \ref{item:hopf1} notice that Theorem \ref{thm:sigmainv2} and Theorem \ref{thm:CharLeftHopf} provide for us a right and a left convolution inverses of the identity morphism: $S$ and $S^{\prime }$ respectively. Since $\End{}{B}$ is a monoid with the convolution product, the two have to coincide and the resulting endomorphism $S^{\prime }=S$ is an antipode for $B$.
Thus, let us prove the equivalence between \ref{item:hopf1} and \ref{item:hopf6}. The only non trivial implication is $\ref{item:hopf6} \Rightarrow \ref{item:hopf1}$, whence assume that $\sigma _{\widehat{B}} $ is an isomorphism and that $\eta_{\widehat{B}}:B\cmptens{} B\to \inv{B\cmptens{} B}{B}\otimes B, x_i\otimes y_i\mapsto \overline{x_i\otimes y_{i_1}}\otimes y_{i_2},$ is injective (the proof with $\theta_{\widehat{B}}$ surjective is analogous\footnote{Note that $\theta_{\widehat{B}}$ is the Hopf-Galois map $\beta:B\otimes B\to B\otimes B: a\otimes b\mapsto ab_1\otimes b_2$.}). In light of relation \eqref{eq:Fsigma} we deduce immediately that $\eta_{\widehat{B}}$ has to be surjective as well and hence an isomorphism of Hopf modules, which is also $B$-linear with respect to the left actions
\begin{equation*}
b\triangleright \left(x_i\otimes y_i\right) = bx_i\otimes y_i \quad \text{and} \quad b\triangleright \left(\overline{x_i\otimes y_i}\otimes z_i\right) = \overline{bx_i\otimes y_i}\otimes z_i
\end{equation*}
on $B\cmptens{} B$ and $\inv{B\cmptens{} B}{B}\otimes B$ respectively. For all $b\in B$, set $\nu(b):=(B\otimes \varepsilon)\left(\eta_{\widehat{B}}^{-1}\left(\overline{1\otimes b}\otimes 1\right)\right)$. This gives an endomorphism $\nu$ of $B$. Since $\eta_{\widehat{B}}^{-1}$ is $B$-bilinear and $B$-colinear, we have that
\begin{align*}
\eta_{\widehat{B}}^{-1}\left(\overline{1\otimes b}\otimes 1\right) & = (B\otimes \varepsilon\otimes B)\left((B\otimes \Delta)\left(\eta_{\widehat{B}}^{-1}\left(\overline{1\otimes b}\otimes 1\right)\right)\right) \\
 & = (B\otimes \varepsilon\otimes B)\left(\eta_{\widehat{B}}^{-1}\left(\overline{1\otimes b}\otimes 1\right)\otimes 1\right) = \nu(b)\otimes 1
\end{align*}
and hence $\eta_{\widehat{B}}^{-1}\left(\overline{x_i\otimes y_i}\otimes z_i\right) = x_i\nu(y_i)z_{i_1}\otimes z_{i_2}$ for every $x_i\otimes y_i\otimes z_i\in B\otimes B\otimes B$. Now, for every $b\in B$ we have
$$1\otimes b = \eta_{\widehat{B}}^{-1}\left(\eta_{\widehat{B}}\left(1\otimes b\right)\right) = \nu(b_1)b_2\otimes b_3$$
and so, by applying $B\otimes \varepsilon$ to both sides, $1\varepsilon(b)=\nu(b_1)b_2$, \ie $\nu$ is a left convolution inverse of the identity. Since we already have a right one, the two have to coincide, giving an antipode for $B$.
The proof of the equivalence between \ref{item:hopf1} and \ref{item:hopf9} is similar.
\end{proof}

\begin{remark}
\begin{enumerate}[label=(\arabic*), ref={(\arabic*)}, leftmargin=1cm, labelsep=0.3cm]
\item\label{item:reph1} By rephrasing \ref{item:hopf6} of Theorem \ref{thm:superEquiv} in functorial terms we have that a bialgebra $B$ is a Hopf algebra if and only if $-\otimes B$ is Frobenius and either $\inv{(-)}{B}$ is faithful or $\coinv{(-)}{B}$ is faithful (and analogously on the other side). 
\item With Theorem \ref{thm:superEquiv}, we implicitly proved another structure theorem for Hopf modules: a bialgebra $B$ is a Hopf algebra if and only if the morphism $\eta_{\widehat{B}}$ is an isomorphism, if and only if every Hopf module $M$ over $B$ satisfies $M\cong \inv{M}{B}\otimes B$. Indeed, if $\eta_{\widehat{B}}$ is invertible then Remark \ref{rem:sigma} entails that $\sigma_{\widehat{B}}$ is invertible as well and hence we conclude by the argument in \ref{item:reph1}. This is the coassociative analogue of the structure theorem for quasi-Hopf bimodules \cite[Theorem 4]{Saracco}.
\end{enumerate}
\end{remark}

Let us conclude this section with the following interesting remark, linking the theory we developed here with Hopfish algebras.

\begin{remark}[Hopfish algebras]\label{rem:Hopfish}
Consider a bialgebra $\left( B,m,u,\Delta ,\varepsilon \right) $ and its
modulation
\begin{equation*}
\boldsymbol{\epsilon } := {{}_{\K }\K {}_{\varepsilon}} \qquad \boldsymbol{\Delta } := {{}_{B\otimes B}\left( B\otimes B\right){} _{\Delta }}
\end{equation*}
in the sense of \cite{Hopfish}. If we consider the Hopf module $B_{\bullet }\otimes B_{\bullet }^{\bullet }$, then
\begin{equation*}
\inv{B\cmptens{} B}{B}=\frac{\cU ^{B}\left( B_{\bullet }\otimes B_{\bullet }^{\bullet }\right) }{\cU ^{B}\left( B_{\bullet }\otimes
B_{\bullet }^{\bullet }\right) B^{+}}=\frac{B\otimes B}{\left( B\otimes B\right) \Delta \left( \ker\left( \varepsilon \right) \right) }=:\boldsymbol{S}.
\end{equation*}
By construction, $\boldsymbol{S}$ is just a $\K$-module, but we may endow it with the $B\otimes B$-module structure induced by the left multiplication, that is, $\left( a\otimes b\right) \triangleright \left( \overline{x\otimes y}\right) :=\overline{ax\otimes by}$. Recall from \cite[Theorem 4.2]{Hopfish} that $\boldsymbol{S}$ is a preantipode for the modulation of $B$. If we assume in addition that the distinguished morphism $\sigma _{\widehat{B}}$ is invertible, then the left $B$-linear morphism
\begin{equation*}
\xymatrix @C=25pt @R=0pt {
B \ar[r]^-{\cong} & \coinv{\left( B\cmptens{} B\right) }{B} \ar[r]^-{\sigma_{\widehat{B}}} & \inv{B\cmptens{} B}{B} = {\displaystyle \frac{B\otimes B}{\left(B\otimes B\right) \Delta \left( \ker\left( \varepsilon \right)\right) }} \\
b \ar@{|->}[rr] & & \left( b\otimes 1\right) +\left( B\otimes B\right) \Delta \left( \ker\left( \varepsilon \right) \right)
}
\end{equation*}
is invertible and hence $\boldsymbol{S}$ is a free left $B$-module of rank one generated by the class of $1\otimes 1$. Summing up, if $\sigma _{\widehat{B}}$ is invertible then $\left( \boldsymbol{B}, \boldsymbol{\Delta },\boldsymbol{\epsilon },\boldsymbol{S}\right) $ is a Hopfish algebra.

An interesting question which remains open is if the converse is true as well, that is to say, if we can characterize Hopfish algebras which are modulations of bialgebras in terms of the invertibility of $\sigma$.
\end{remark}


\section{Adjoint pairs and triples related to Hopf modules and FH-algebras}\label{sec:FrobeniusHopf}

As we have seen at the beginning of \S\ref{sec:HopfMod}, the adjoint triple \eqref{eq:FundTriple} studied in the previous section is just one member of a family of adjunctions appearing in the study of Hopf modules. In the present section we will spend a few words concerning the others and the property of being Frobenius for them and we will address the question concerning the relationship with Pareigis' results \cite{Pareigis}.

Let $M\in\rhopf{B}$. For the sake of clearness, for every $N\in\Rmod{B}$ we set $N\cmptens{}M:=\rmod{N}\otimes \rhopfmod{M}$, for every $P\in\Rcomod{B}$ we set $P\tildetens{}M:=\rcomod{P}\otimes \rhopfmod{M}$ and for every $V\in\M$ we set $V\bartens{}M:=V\otimes \rhopfmod{M}$ in $\rhopf{B}$. The notation $-\otimes M$ will be reserved for the functor $\M\to\Rmod{B}$, $V \mapsto V\otimes \rmod{M}$. 

Given a bialgebra $B$ over a commutative ring $\K$, it is straightforward to check that the category of left $B$-modules is not only monoidal, but in fact a (right) closed monoidal category with internal hom-functor $\Hom{}{}{B}{}{B\otimes N}{-}$ for all $N\in \Rmod{B}$. 

\begin{lemma}[compare with {\cite[\S2.1]{PhD}}, {\cite[Proposition 3.3]{Schauenburg-dualsdoubles}}]\label{lemma:modclosed}
Let $B$ be a bialgebra. The category ${\M_B}$ of right $B$-modules is left and right-closed. Namely, we have bijections
\begin{gather}
\xymatrix{
\Hom{}{}{B}{}{M\otimes N}{P} \ar@<+0.5ex>[r]^-{\varphi} & \Hom{}{}{B}{}{M}{\Hom{}{}{B}{}{B\otimes N}{P}}  \ar@<+0.5ex>[l]^-{\psi},
} \label{eq:lmodclosed}\\
\xymatrix{
\Hom{}{}{B}{}{N\otimes M}{P} \ar@<+0.5ex>[r]^-{\varphi'} & \Hom{}{}{B}{}{M}{\Hom{}{}{B}{}{N\otimes B}{P}}  \ar@<+0.5ex>[l]^-{\psi'},
}\notag
\end{gather}
natural in $M$ and $P$, given explicitly by
\begin{gather*}
\varphi(f)(m):a\otimes n \mapsto f(m\cdot a\otimes n), \qquad \psi(g):m\otimes n \mapsto g(m)(1\otimes n),\\
\varphi'(f)(m):n\otimes a \mapsto f(n\otimes m\cdot a), \qquad \psi'(g):n\otimes m \mapsto g(m)(n\otimes 1), 
\end{gather*}
where the right $B$-module structures on $\Hom{}{}{B}{}{B\otimes N}{P}$ and $\Hom{}{}{B}{}{N\otimes B}{P}$ are induced by the left $B$-module structure on $B$ itself.
\end{lemma}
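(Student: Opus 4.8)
The plan is to verify directly that the two assignments $\varphi,\psi$, together with their primed analogues $\varphi',\psi'$, are well defined, mutually inverse, and natural; once this is done the displayed bijections exhibit $\Hom{}{}{B}{}{B\otimes N}{-}$ and $\Hom{}{}{B}{}{N\otimes B}{-}$ as right adjoints to the functors $-\otimes N$ and $N\otimes-$ on $\M_B$, which is precisely the assertion that $\M_B$ is left- and right-closed. Throughout I use that, for $M,N\in\M_B$, the tensor product $M\otimes N$ carries the diagonal action $(m\otimes n)\cdot b=mb_1\otimes nb_2$, and I fix on $\Hom{}{}{B}{}{B\otimes N}{P}$ the right $B$-action obtained from the \emph{left} regular action of $B$ on itself, namely $(g\cdot a)(b\otimes n):=g(ab\otimes n)$ for $g\in\Hom{}{}{B}{}{B\otimes N}{P}$ and $a\in B$ (symmetrically, $(g\cdot a)(n\otimes b):=g(n\otimes ab)$ on $\Hom{}{}{B}{}{N\otimes B}{P}$).

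First I would record the two preliminary compatibilities on which everything rests. On the one hand, $a\mapsto g\cdot a$ is a genuine right $B$-action because left multiplications compose covariantly: $((g\cdot a)\cdot a')(b\otimes n)=g(aa'b\otimes n)=(g\cdot(aa'))(b\otimes n)$. On the other hand, $g\cdot a$ is again right $B$-linear because the left and right regular actions of $B$ on itself commute and leave the $N$-slot untouched: $(g\cdot a)((b\otimes n)\cdot c)=g(abc_1\otimes nc_2)=g((ab\otimes n)\cdot c)=g(ab\otimes n)\cdot c$, the last equality being $B$-linearity of $g$. These two facts make $\Hom{}{}{B}{}{B\otimes N}{P}$ a right $B$-module, and I expect the identification of this particular action as the correct one to be the only genuinely delicate point.

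Next I would check that $\varphi$ and $\psi$ land where claimed and invert each other. That $\varphi(f)(m)$ is $B$-linear on $B\otimes N$, and that $m\mapsto\varphi(f)(m)$ is $B$-linear into the internal hom, are two short Sweedler computations; the second reads $\varphi(f)(m\cdot b)(a\otimes n)=f(mba\otimes n)=(\varphi(f)(m)\cdot b)(a\otimes n)$ and is exactly where the chosen action intervenes. For $\psi$, the $B$-linearity of $\psi(g)$ on $M\otimes N$ uses, in this order, the $B$-linearity of $g$, the definition of the action on the hom, and the $B$-linearity of $g(m)$, collapsing $\psi(g)((m\otimes n)\cdot b)$ to $\psi(g)(m\otimes n)\cdot b$. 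The two round trips are then immediate from $m\cdot 1=m$ and $a\cdot 1=a$: one finds $\psi(\varphi(f))(m\otimes n)=f(m\otimes n)$ and $\varphi(\psi(g))(m)(a\otimes n)=(g(m)\cdot a)(1\otimes n)=g(m)(a\otimes n)$, so $\varphi$ and $\psi$ are mutually inverse.

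Finally, naturality in $P$ (postcomposition with a $B$-linear $h\colon P\to P'$) and in $M$ (precomposition with a $B$-linear map $M'\to M$) is a direct chase from the explicit formulas. The right-closed half, with the action $(g\cdot a)(n\otimes b):=g(n\otimes ab)$ and the formulas for $\varphi',\psi'$, is proved verbatim by the symmetric argument, left multiplication now acting on the second tensor factor, so I would simply indicate that it follows \emph{mutatis mutandis}. The only real obstacle, as noted, is pinning down the correct module structure on the internal hom and checking its compatibility with the diagonal action; the remaining steps are bookkeeping with the comultiplication.
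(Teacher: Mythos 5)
Your proof is correct and follows essentially the same route as the paper's: fix the right $B$-action on the internal hom induced by left multiplication, verify by direct Sweedler computations that $\varphi$ and $\psi$ are well defined, mutually inverse and natural, and obtain the primed bijection by the symmetric argument. The only cosmetic difference is that the paper also notes the primed case can alternatively be deduced from the canonical isomorphism $(N\otimes B)\tensor{B}M\cong N\otimes(B\tensor{B}M)$ and the classical hom-tensor adjunction, but it explicitly accepts the ``analogous'' proof you give.
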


\begin{lemma}
For $N\in\rhopf{B}$, the natural bijection \eqref{eq:lmodclosed} induces a bijection
\begin{equation*}
\Hom{}{}{B}{B}{M\cmptens{} N}{P}\cong \Hom{}{}{B}{}{M}{\Hom{}{}{B}{B}{B\cmptens{} N}{P}}
\end{equation*}
natural in $M\in\M_B$ and $P\in\M^B_B$. Thus, the functor $\Hom{}{}{B}{B}{B\cmptens{} N}{-}:\M_B^B\to\M_B$ is right adjoint to the functor $-\cmptens{}  N:\M_B\to\M_B^B$
\end{lemma}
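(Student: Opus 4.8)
The plan is to produce the desired bijection by cutting down the closed-structure isomorphism \eqref{eq:lmodclosed} of Lemma \ref{lemma:modclosed} to the Hopf-module morphisms on each side. The first observation is that, as objects of $\M_B$, both $M\cmptens{}N$ and $B\cmptens{}N$ coincide with the monoidal products $M\otimes N$ and $B\otimes N$ endowed with the diagonal action, while the extra structure carried by these Hopf modules is the coaction $\delta(x\otimes n)=x\otimes n_0\otimes n_1$, which involves only the $N$-tensorand (in particular, for $M$ the trivial module this recovers \eqref{eq:TrivHopfMod}). Consequently \eqref{eq:lmodclosed} already furnishes a bijection $\Hom{}{}{B}{}{M\otimes N}{P}\cong\Hom{}{}{B}{}{M}{\Hom{}{}{B}{}{B\otimes N}{P}}$, natural in $M$ and $P$, and it remains only to check that $\varphi$ and $\psi$ identify the colinear maps on the left with the maps landing among the colinear maps on the right.

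First I would record that $\Hom{}{}{B}{B}{B\cmptens{}N}{P}$ is a right $B$-submodule of $\Hom{}{}{B}{}{B\otimes N}{P}$. By construction the right action is precomposition with the left-multiplication maps $a\triangleright(b\otimes n)=ab\otimes n$; each such map is an endomorphism of $B\cmptens{}N$ in $\M_B^B$, since it commutes with the diagonal right action and is colinear because the coaction $\delta(b\otimes n)=b\otimes n_0\otimes n_1$ does not see the $B$-factor. Hence precomposition sends colinear maps to colinear maps, the action restricts, and the right-hand side of the claimed bijection is meaningful.

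The key step is to match colinearity through $\varphi$ and $\psi$. Writing $\varphi(f)(m)\colon a\otimes n\mapsto f(m\cdot a\otimes n)$, the map $\varphi(f)(m)$ is colinear for every $m\in M$ exactly when
\[
\delta_P\bigl(f(m\cdot a\otimes n)\bigr)=f(m\cdot a\otimes n_0)\otimes n_1 \qquad\text{for all } m\in M,\ a\in B,\ n\in N.
\]
Setting $a=1$ shows that this condition forces $f$ to be colinear for the coaction $\delta(m\otimes n)=m\otimes n_0\otimes n_1$ on $M\cmptens{}N$; conversely, if $f$ is colinear then, since $m\cdot a$ is again an element of $M$, the displayed identity holds for every $a\in B$. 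Therefore $\varphi$ restricts to a bijection between $\Hom{}{}{B}{B}{M\cmptens{}N}{P}$ and the right $B$-linear maps $M\to\Hom{}{}{B}{B}{B\cmptens{}N}{P}$, with inverse the restriction of $\psi$. Naturality in $M$ and $P$ is inherited verbatim from Lemma \ref{lemma:modclosed}, and the stated adjunction $-\cmptens{}N\dashv\Hom{}{}{B}{B}{B\cmptens{}N}{-}$ follows at once.

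I do not expect a genuinely hard step here: the whole content is that the closed structure of $\M_B$ is transparent to the coaction supplied by $N$. The only points requiring care are the colinearity correspondence above and the verification that the internal-hom action preserves colinear maps, both of which rest on the single fact that the comodule structures of $M\cmptens{}N$ and $B\cmptens{}N$ ignore their left tensorand.
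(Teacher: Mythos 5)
Your proposal is correct and takes essentially the same route as the paper: both restrict the closed-structure bijection $(\varphi,\psi)$ of Lemma \ref{lemma:modclosed} to colinear maps, the whole point being that the coactions on $M\cmptens{}N$ and $B\cmptens{}N$ ignore the left tensorand, so that colinearity of $f$ corresponds exactly to colinearity of every $\varphi(f)(m)$ (your ``set $a=1$'' direction is the paper's computation for $\psi$). Your two small additions --- the explicit check that precomposition with left multiplication preserves colinearity, making $\Hom{}{}{B}{B}{B\cmptens{}N}{P}$ a genuine $B$-submodule of $\Hom{}{}{B}{}{B\otimes N}{P}$, and the packaging of the two directions as a single equivalence --- are points the paper leaves implicit.
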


\begin{proof}
We refer to the notation used in Lemma \ref{lemma:modclosed}. We already know that for every $f\in \Hom{}{}{B}{B}{M\cmptens{} N}{P}$, $\varphi(f)\in \Hom{}{}{B}{}{\rmod{M}}{\Hom{}{}{B}{}{\rmod{B}\otimes \rmod{N}}{\rmod{P}}}$. 
In addition, 
\[
\big(\varphi(f)(m)\big)(a\otimes n_0)\otimes n_1 = f(m\cdot a\otimes n_0)\otimes n_1 = f(m\cdot a\otimes n)_0\otimes f(m\cdot a\otimes n)_1
\]
for all $m\in M, n\in N, a\in B$, whence it is also colinear. For what concerns $\psi$, we know that $\psi(g)\in \Hom{}{}{B}{B}{\rmod{M}\otimes \rmod{N}}{\rmod{P}}$ for all $g\in \Hom{}{}{B}{}{M}{\Hom{}{}{B}{B}{B\cmptens{} N}{P}}$. 
In addition,
\[
\psi(g)(m\otimes n_0)\otimes n_1 = g(m)(1\otimes n_0)\otimes n_1 = g(m)(1\otimes n)_0\otimes g(m)(1\otimes n)_1
\]
for every $m\in M$, $n\in N$. Naturality is left to the reader.
\end{proof}

Summing up, we can consider the following family of adjunctions strictly connected with Hopf $B$-modules and the Structure Theorem:

\begin{gather}\label{eq:AllAdj}
\begin{gathered}
\xymatrix{
\rhopf{B} \ar@/^/[d]^-{U_B} \\
\Rcomod{B} \ar@/^/[u]^-{-\tildetens{}B}
} \qquad
\xymatrix{
\Rcomod{B} \ar@/^/[d]^-{\coinv{(-)}{B}} \\
\M \ar@/^/[u]^-{-\otimes \K^u}
} \qquad
\xymatrix{
\rhopf{B} \ar@/_/@<-2ex>[d]_-{\cl{(-)}^B} \ar@/^/@<+2ex>[d]^-{\coinv{(-)}{B}} \\
\M \ar[u]|-{-\bartens{}B}
} \qquad
\xymatrix{
\Rcomod{B} \ar@/_/@<-2ex>[d]_-{U'} \ar@/^/@<+2ex>[d]^-{\Hom{}{}{}{B}{B}{-}} \\
\M \ar[u]|-{-\otimes'B}
} \\
\xymatrix{
\Rmod{B} \ar@/_/[d]_-{-\tensor{B}\K} \\
\M \ar@/_/[u]_-{-\otimes \K_{\varepsilon}}
}
\qquad
\xymatrix{
\Rmod{B} \ar[d]|-{U} \\
\M \ar@/^/@<+1ex>[u]^-{-\otimes B} \ar@/_/@<-1ex>[u]_-{\Hom{}{}{}{}{B}{-}}
} \qquad
\xymatrix{
\rhopf{B} \ar@/_/@<-2ex>[d]_-{U^B} \ar@/^/@<+2ex>[d]^-{\Hom{}{}{B}{B}{B\cmptens{}B}{-}} \\
\Rmod{B} \ar[u]|-{-\cmptens{}B}
}
\end{gathered}
\end{gather}
For every $V\in \M$, $N\in\Rmod{B}$ and $P\in\Rcomod{B}$ we have
\begin{gather}\label{eq:compositions}
\begin{gathered}
U^B(V\bartens{} B) = V\otimes B, \qquad \coinv{(N\cmptens{}B)}{B} \cong U(N), \\
U_B\left(V\bartens{} B\right) = V\otimes'B \quad \text{and} \quad \cl{P\tildetens{}B}^B \cong U'(P).
\end{gathered}
\end{gather}

\begin{proposition}\label{prop:ManyAdj}
The following assertions hold.
\begin{enumerate}[label=(\arabic*), ref=\emph{(\arabic*)}, leftmargin=1cm, labelsep=0.3cm]
\item\label{item:ManyAdj1} If $-\tildetens{}B$ is Frobenius and $\Hom{}{}{}{B}{U_B(M)}{V^u}\cong\Hom{}{}{}{}{\coinv{M}{B}}{V}$ naturally in $M\in\rhopf{B},V\in\M$ then $-\bartens{}B$ is Frobenius.
\item\label{item:ManyAdj2} If $-\cmptens{}B$ is Frobenius and $\Hom{}{}{B}{}{V_\varepsilon}{U^B(M)}\cong\Hom{}{}{}{}{V}{\cl{M}^{B}}$ naturally in $M\in\rhopf{B},V\in\M$ then $-\bartens{}B$ is Frobenius.
\item\label{item:ManyAdj3} If $-\bartens{}B$ and $-\cmptens{}B$ are Frobenius then $-\otimes B$ is Frobenius.
\item\label{item:ManyAdj4} If $-\bartens{}B$ and $-\tildetens{}B$ are Frobenius then $-\otimes' B$ is Frobenius.
\item\label{item:ManyAdj5} If $-\tildetens{}B$ is Frobenius and $\Hom{}{}{}{B}{U_B(M)}{V^u}\cong\Hom{}{}{}{}{\coinv{M}{B}}{V}$ naturally in $M\in\rhopf{B},V\in\M$ then $-\otimes' B$ is Frobenius.
\item\label{item:ManyAdj6}  If $-\cmptens{}B$ is Frobenius and $\Hom{}{}{B}{}{V_\varepsilon}{U^B(M)}\cong\Hom{}{}{}{}{V}{\cl{M}^{B}}$ naturally in $M\in\rhopf{B},V\in\M$ then $-\otimes B$ is Frobenius.
\end{enumerate}
\end{proposition}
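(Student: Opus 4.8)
The plan rests on two elementary principles about Frobenius pairs, combined with the explicit adjunctions recorded in \eqref{eq:AllAdj}. First, Frobenius functors are closed under composition: if $(\cF_1,\cG_1)$ and $(\cF_2,\cG_2)$ are Frobenius pairs with $\cF_1$ composable before $\cF_2$, then $(\cF_2\cF_1,\cG_1\cG_2)$ is again a Frobenius pair, since $\cG_1\cG_2$ is simultaneously a left and a right adjoint of $\cF_2\cF_1$. Second, if a functor $\cH$ is Frobenius and we already know one adjunction $\cH\dashv\cK$ (or $\cK\dashv\cH$), then by uniqueness of adjoints the two-sided adjoint of $\cH$ must be $\cK$; hence the \emph{reverse} adjunction holds as well and $\cK$ is itself Frobenius. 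Applying this to the adjunctions $-\tildetens{}B\dashv U_B$ and $U^B\dashv -\cmptens{}B$ of \eqref{eq:AllAdj}, I record that if $-\tildetens{}B$ is Frobenius then $U_B$ is Frobenius and $U_B\dashv -\tildetens{}B$, and dually if $-\cmptens{}B$ is Frobenius then $U^B$ is Frobenius and $-\cmptens{}B\dashv U^B$. I shall also use the identifications $V\bartens{}B = (V^u)\tildetens{}B = (V_\varepsilon)\cmptens{}B$, immediate from the formulas for the structures, and the fact that $-\bartens{}B$ is the fully faithful middle term of the adjoint triple $\inv{(-)}{B}\dashv -\bartens{}B\dashv\coinv{(-)}{B}$ of \eqref{eq:FundTriple}.

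For \ref{item:ManyAdj1} the plan is to show that the hypotheses force $\coinv{(-)}{B}$ to be a \emph{left} adjoint of $-\bartens{}B$ as well. Using $U_B\dashv -\tildetens{}B$ (available precisely because $-\tildetens{}B$ is Frobenius) and then the assumed natural isomorphism, one obtains the chain
\[
\Hom{}{}{}{}{\coinv{M}{B}}{V}\cong\Hom{}{}{}{B}{U_B(M)}{V^u}\cong\Hom{}{}{B}{B}{M}{(V^u)\tildetens{}B}=\Hom{}{}{B}{B}{M}{V\bartens{}B},
\]
natural in $M\in\rhopf{B}$ and $V\in\M$. This is exactly an adjunction $\coinv{(-)}{B}\dashv -\bartens{}B$; together with the right-hand adjunction of \eqref{eq:FundTriple} it exhibits $(-\bartens{}B,\coinv{(-)}{B})$ as a Frobenius pair, so $-\bartens{}B$ is Frobenius. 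Part \ref{item:ManyAdj2} is the exact mirror image: from $-\cmptens{}B\dashv U^B$ and the assumed isomorphism one gets $\Hom{}{}{B}{B}{V\bartens{}B}{M}\cong\Hom{}{}{B}{}{V_\varepsilon}{U^B(M)}\cong\Hom{}{}{}{}{V}{\cl{M}^{B}}$, showing that $\inv{(-)}{B}$ is a \emph{right} adjoint of $-\bartens{}B$ and hence, with the left-hand adjunction of the triple, that $-\bartens{}B$ is Frobenius.

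Parts \ref{item:ManyAdj3} and \ref{item:ManyAdj4} then follow by composition. By \eqref{eq:compositions} one has $-\otimes B = U^B\circ(-\bartens{}B)$ and $-\otimes' B = U_B\circ(-\bartens{}B)$ as functors out of $\M$. In \ref{item:ManyAdj3} both $-\bartens{}B$ (hypothesis) and $U^B$ (Frobenius because $-\cmptens{}B$ is, via the second principle above) are Frobenius, so their composite $-\otimes B$ is Frobenius; \ref{item:ManyAdj4} is identical with $U_B$ in place of $U^B$. Finally, \ref{item:ManyAdj5} and \ref{item:ManyAdj6} are obtained by chaining the previous items: the hypotheses of \ref{item:ManyAdj5} are exactly those of \ref{item:ManyAdj1}, which yields $-\bartens{}B$ Frobenius, and then \ref{item:ManyAdj4} (whose remaining hypothesis ``$-\tildetens{}B$ Frobenius'' is assumed) gives $-\otimes' B$ Frobenius; symmetrically, \ref{item:ManyAdj6} follows from \ref{item:ManyAdj2} and then \ref{item:ManyAdj3}.

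The only genuinely delicate point, and where care is required, is the bookkeeping of the \emph{directions} of the adjunctions: the substantive use of the hypothesis ``$-\tildetens{}B$ (resp.\ $-\cmptens{}B$) is Frobenius'' is not to compose functors but to upgrade the automatic one-sided adjunction $-\tildetens{}B\dashv U_B$ (resp.\ $U^B\dashv -\cmptens{}B$) to the reverse adjunction $U_B\dashv -\tildetens{}B$ (resp.\ $-\cmptens{}B\dashv U^B$) needed to run the $\mathsf{Hom}$-isomorphism chains above. One must also verify that the displayed isomorphisms are natural in both variables, so that they assemble into honest adjunctions rather than mere pointwise bijections; this is routine, as each constituent isomorphism is itself natural.
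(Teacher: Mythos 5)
Your proposal is correct and takes essentially the same route as the paper: the paper's own proof consists of exactly the Hom-isomorphism chains you write for \ref{item:ManyAdj1} and \ref{item:ManyAdj2} (assumed natural isomorphism, adjunction reversed via the Frobenius hypothesis, and the identifications $V\bartens{}B=(V^u)\tildetens{}B=(V_\varepsilon)\cmptens{}B$), analogous chains built from \eqref{eq:compositions} for \ref{item:ManyAdj3} and \ref{item:ManyAdj4}, and the observation that \ref{item:ManyAdj5} and \ref{item:ManyAdj6} follow by combining the earlier items. Your packaging of \ref{item:ManyAdj3} and \ref{item:ManyAdj4} through the principle that composites of Frobenius functors are Frobenius, applied to $-\otimes B=U^B\circ(-\bartens{}B)$ and $-\otimes' B=U_B\circ(-\bartens{}B)$, is just the abstract form of the paper's explicit chains, so the two arguments coincide in substance.
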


\begin{proof}
Straightforward.
\end{proof}

\begin{remark}\label{rem:CMZ}
Concerning conditions under which the adjunctions \eqref{eq:AllAdj} give rise to Frobenius pairs, the interested reader may check \cite[\S3.3 and \S3.4]{CaenepeelMilitaruZhu}.
\end{remark}


In \cite{Pareigis}, Pareigis proved that for a bialgebra $B$ over a commutative ring $\K$ the following assertions are equivalent: \begin{enumerate*} \item $B$ is a Hopf algebra, finitely generated and projective as a $\K$-module, such that $\int_l\rdual{B}\cong \K$ and \item $B$ is Frobenius as an algebra and its Frobenius homomorphism is a left integral on $B$\end{enumerate*} (see also \cite{KadisonStolin}). We conclude this section by discussing some categorical implications of this result.

\begin{remark}
For the sake of honesty and correctness, let us point out that in \cite{Pareigis} there is no explicit reference to the fact that if $B$ is Frobenius as an algebra and its Frobenius homomorphism $\psi$ is a left integral on $B$ then $\int_l\rdual{B}\cong \K$. Nevertheless, the subsequent argument is a straightforward consequence of the results therein. If a bialgebra $B$ is Frobenius as an algebra and $\psi\in\int_l\rdual{B}$, then by \cite[Theorem 2]{Pareigis} $B$ is a Hopf algebra and it is finitely generated and projective over $\K$. By \cite[Theorem 3]{Pareigis}, $\int_lB\cong \K$. As observed at \cite[page 596]{Pareigis}, $\rdual{B}$ is a finitely generated and projective Hopf algebra as well, $B^{**}\cong B$ as Hopf algebras and $\int_lB\cong \int_lB^{**}$. Therefore, by \cite[Theorem 1]{Pareigis}, $B^*$ is a Frobenius algebra. By \cite[Theorem 3]{Pareigis} again, $\int_lB^*\cong \K$.
\end{remark}

Let us begin by recalling some facts about Frobenius algebras. A $\K$-algebra $A$ is Frobenius if it is finitely-generated and projective as a $\K$-module and $A\cong \rdual{A}$ as right (or left) $A$-modules. This is equivalent to say that there exist an element $e:=e^1\otimes e^2\in A\otimes A$ (summation understood) and a linear map $\psi:A\to\K$ such that $ae=ea$ for all $a\in A$ and
\begin{equation}\label{eq:FrobRel}
e^1\psi(e^2) = 1 = \psi(e^1)e^2.
\end{equation}
The element $e$ is called a \emph{Casimir element} and the morphism $\psi$ a \emph{Frobenius homomorphism}. The pair $(\psi,e)$ is a \emph{Frobenius system} for $B$. A Frobenius homomorphism $\psi$ is a free generator of $\rdual{A}$ as a right (resp. left) $A$-module and the isomorphism $\Psi:A\to \rdual{A}$ is given by $\Psi(a)=\psi\cdot a$ (resp. $\Psi(a)=a\cdot \psi$), where $(\psi\cdot a)(b)=\psi(ab)$ for every $a,b\in B$. If $A$ is Frobenius and it is also augmented with augmentation $\varepsilon : A \to \K$, then there exists $T\in A$ such that $\varepsilon = \Psi(T) = \psi\cdot T$. It is called a \emph{right norm} in $A$ with respect to $\psi$. In particular, $\psi(T)=1$. If $e$ is a Casimir element such that \eqref{eq:FrobRel} holds (we will naively call it the Casimir element corresponding to $\psi$), then $T=\varepsilon(e^1)e^2$, because
\[
\psi(\varepsilon(e^1)e^2a) = \varepsilon(ae^1)\psi(e^2) =  \varepsilon(ae^1\psi(e^2)) =\varepsilon(a)
\]
for every $a\in A$ and $\Psi$ is invertible. In particular, $T$ is a right integral in $A$. Analogously, one may call \emph{left norm} an element $t\in A$ such that $t\cdot \psi=\varepsilon$ and in this case the identity $t=e^1\varepsilon(e^2)$ tells us that $t$ is a left integral. Finally, if $B$ is a bialgebra which is also a Frobenius algebra such that the Frobenius morphism $\psi$ is a right integral in $\rdual{B}$, then we call $B$ an \emph{FH-algebra} by mimicking \cite{KadisonStolin,Pareigis-cohomology}. 

\begin{remark}
If we consider a right-handed analogue of Pareigis' results, then we have that any finitely generated and projective Hopf algebra $B$ with $\int_r\rdual{B}\cong \K$ is Frobenius with Frobenius morphism $\psi\in\int_r\rdual{B}$ (by using the Structure Theorem for left Hopf modules). Conversely, if $B$ is an FH-algebra with Frobenius morphism $\psi$ and if $T$ is a right norm in $B$ with respect to $\psi$, then $B$ is a (finitely generated and projective) Hopf algebra, where the antipode is given by $S(a)=\psi(T_1a)T_2$ for all $a\in B$ (see \cite{KadisonStolin}). Moreover, if $t$ is a left norm in $B$ with respect to $\psi$, then the assignment $B\to B:a\mapsto \psi(at_1)t_2$ provides an inverse $S^{-1}$ for the antipode $S$, in light of \cite[Proposition 10.5.2(a)]{Radford} for example. We will often make use of these facts in what follows, as well as of the fact that for a finitely generated and projective $\K$-bialgebra $B$, $\psi \in \int_rB^*$ if and only if $\psi(a_1)a_2=\psi(a)1$ for all $a\in B$.
\end{remark}

\begin{lemma}\label{lemma:CoinvCasimir}
If $B$ is an FH-algebra, then the Casimir element $e$ corresponding to $\psi\in \int_r\rdual{B}$ satisfies $e^1_1\otimes e^2_1\otimes e^1_2e^2_2 = e^1\otimes e^2\otimes 1$, that is to say, $e\in\coinv{\left(B^\bullet\otimes B^\bullet\right)}{B}$.
\end{lemma}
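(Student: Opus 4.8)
The plan is to realise $e$ explicitly as the image of a one-sided norm under $\id\otimes S$, and then to reduce coinvariance to a purely formal identity. Being an FH-algebra, $B$ is a finitely generated and projective Hopf algebra with bijective antipode $S$, and (as recalled in the remark preceding the statement) there is a left norm $t\in B$, with $\psi(bt)=\varepsilon(b)$ for all $b$, such that $S^{-1}(a)=\psi(at_1)t_2$ for every $a\in B$. I would first record that the Casimir element attached to $\psi$ is characterised by the dual-basis relation $\psi(be^1)e^2=b$ for all $b\in B$ (a direct consequence of $\psi(e^1)e^2=1$ together with the centrality of $e$); this relation determines $e$ uniquely, since the $\K$-linear map $B\otimes B\to\End{\K}{B}$ sending $x\otimes y$ to $[\,b\mapsto\psi(bx)y\,]$ is bijective. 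Indeed it factors as $\Psi^{\mathrm{l}}\otimes\id$, where $\Psi^{\mathrm{l}}\colon x\mapsto x\cdot\psi$ is the (left-handed) Frobenius isomorphism, followed by the canonical identification $\rdual{B}\otimes B\cong\End{\K}{B}$, which is available because $B$ is finitely generated and projective.

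Next I would verify that $t_1\otimes S(t_2)$ satisfies this characterisation. Using $\K$-linearity of $S$ and the displayed formula for $S^{-1}$,
\[
\psi(bt_1)\,S(t_2)=S\bigl(\psi(bt_1)\,t_2\bigr)=S\bigl(S^{-1}(b)\bigr)=b
\qquad(b\in B),
\]
so, by the uniqueness just discussed, $e^1\otimes e^2=t_1\otimes S(t_2)$.

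Finally I would isolate the general fact that, for \emph{any} $c\in B$, the element $c_1\otimes S(c_2)$ is coinvariant for the diagonal coaction $\delta(x\otimes y)=x_1\otimes y_1\otimes x_2y_2$. Expanding $\delta$ and using that $S$ is anti-comultiplicative gives
\[
\delta\bigl(c_1\otimes S(c_2)\bigr)=c_1\otimes S(c_4)\otimes c_2S(c_3)=c_1\otimes S(c_2)\otimes 1,
\]
where the last equality uses the antipode axiom $c_2S(c_3)=\varepsilon(c_2)1$ followed by counitality. Specialising $c=t$ turns this into $\delta(e)=e\otimes 1$, that is, $e\in\coinv{(B^\bullet\otimes B^\bullet)}{B}$, as wanted. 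The genuinely delicate part is the first paragraph: matching the one-sided conventions (left versus right norm, and $S$ versus $S^{-1}$ in the formula for $e$) and securing the uniqueness of the Casimir attached to a fixed Frobenius homomorphism. The coinvariance computation itself is formal and, tellingly, does not use that $\psi$ is an integral; that hypothesis enters only through the norm $t$ and the antipode formula that exhibit $e$ in the form $t_1\otimes S(t_2)$.
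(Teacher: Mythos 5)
Your proof is correct, and it arrives at the same explicit description of the Casimir element as the paper, but by a more self-contained route. The paper's proof is the mirror image of yours: it takes a right norm $T$ (and a left norm $t$) and identifies the Casimir element as $S^{-1}(T_2)\otimes T_1=\psi(T_2t_1)t_2\otimes T_1$ by citing \cite[Proposition 4.2]{KadisonStolin}, after which coinvariance is the same kind of formal computation you perform, using anti-comultiplicativity of $S^{-1}$ where you use that of $S$. What you do differently is to bypass the citation altogether: you characterize the Casimir element by the dual-basis relation $\psi(be^1)e^2=b$, observe that this relation has at most one solution because the map $B\otimes B\to\End{\K}{B}$, $x\otimes y\mapsto[\,b\mapsto\psi(bx)y\,]$, is bijective (the left-handed Frobenius isomorphism composed with the canonical isomorphism $\rdual{B}\otimes B\cong\End{\K}{B}$, available since $B$ is finitely generated and projective), and then check that $t_1\otimes S(t_2)$ satisfies the relation via the formula $S^{-1}(b)=\psi(bt_1)t_2$ recalled before the statement. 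By the very uniqueness you establish, your element $t_1\otimes S(t_2)$ and the paper's $S^{-1}(T_2)\otimes T_1$ coincide. Your version buys independence from the external reference and cleanly isolates the purely formal fact that $c_1\otimes S(c_2)$ is coinvariant for every $c\in B$; the paper's version is shorter precisely because it delegates the identification of $e$ to Kadison--Stolin.
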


\begin{proof}
Let $T$ be a right norm in $B$ with respect to $\psi$ and $t$ be a left norm instead. In light of \cite[Proposition 4.2]{KadisonStolin}, the element $S^{-1}(T_2)\otimes T_1 = \psi(T_2t_1)t_2\otimes T_1$ is the Casimir element corresponding to $\psi$ and it is easy to see that it is coinvariant with respect to the coaction of the statement.
\end{proof}

Henceforth, let us assume that $B$ is an FH-algebra with Frobenius morphism $\psi\in\int_r\rdual{B}$ and Casimir element $e=e^1\otimes e^2$.

\begin{proposition}\label{prop:ForgetModFrobenius}
The assignment
\[
\Hom{}{}{}{B}{U_B(M)}{P} \to \Hom{}{}{B}{B}{M}{P\tildetens{} B} : \quad f \mapsto \left[ m \mapsto f(me^1)\otimes e^2\right]
\]
is a bijection, natural in $M\in\M^B_B$ and $P\in\M^B$, with inverse
\[
\Hom{}{}{B}{B}{M}{P\tildetens{} B} \to \Hom{}{}{}{B}{U_B(M)}{P}: \quad g \mapsto \left[ m \mapsto (P\otimes \psi)(g(m)) \right].
\]
In particular, the functor $U_B:\M^B_B\to \M^B$ forgetting the $B$-module structure is Frobenius with left and right adjoint $-\tildetens{} B:\M^B\to \M^B_B$.
\end{proposition}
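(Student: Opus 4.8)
The plan is to prove the displayed bijection directly and check naturality, since that bijection is precisely the hom-set adjunction expressing $U_B \dashv -\tildetens{} B$; combined with the left adjunction $-\tildetens{} B \dashv U_B$ coming from \eqref{eq:AllAdj} (the free/forgetful adjunction for $B$ regarded as a monoid in $\M^B$), this exhibits $-\tildetens{} B$ as a two-sided adjoint of $U_B$ and hence $U_B$ as Frobenius. I write $\delta_M(m)=m_0\otimes m_1$ for the coaction of $M\in\rhopf{B}$, I equip $P\tildetens{} B$ with the codiagonal coaction $p\otimes b\mapsto p_0\otimes b_1\otimes p_1b_2$ and action $(p\otimes b)\cdot c=p\otimes bc$, and I abbreviate the two assignments by $\alpha(f)(m)=f(m\cdot e^1)\otimes e^2$ and $\beta(g)(m)=(P\otimes\psi)(g(m))$.

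First I would check that $\alpha(f)$ is indeed a morphism in $\rhopf{B}$. Its right $B$-linearity falls out of the Casimir condition $\sum be^1\otimes e^2=\sum e^1\otimes e^2b$: applying the $\K$-linear map $x\otimes y\mapsto f(m\cdot x)\otimes y$ gives $\alpha(f)(m\cdot b)=\alpha(f)(m)\cdot b$. For $B$-colinearity I would expand $\delta_{P\tildetens{} B}(\alpha(f)(m))$ using the codiagonal coaction, insert the colinearity of $f$ together with the Hopf-module identity $\delta_M(m\cdot e^1)=m_0\cdot e^1_1\otimes m_1e^1_2$, and then apply Lemma \ref{lemma:CoinvCasimir}, i.e. $e^1_1\otimes e^2_1\otimes e^1_2e^2_2=e^1\otimes e^2\otimes 1$, to collapse the result to $\alpha(f)(m_0)\otimes m_1$. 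Dually, $\beta(g)$ is $B$-colinear: writing $g(m)=\sum_i p_i\otimes b_i$ and applying $P\otimes\psi\otimes B$ to the colinearity identity $\delta_{P\tildetens{} B}(g(m))=g(m_0)\otimes m_1$, the right-integral property $\psi(a_1)a_2=\psi(a)1$ turns $\psi((b_i)_1)(b_i)_2$ into $\psi(b_i)1$ and identifies the outcome with $\beta(g)(m_0)\otimes m_1$.

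Next I would verify that $\alpha$ and $\beta$ are mutually inverse. The composite $\beta\alpha$ sends $f$ to $m\mapsto f(m\cdot e^1)\psi(e^2)=f\big(m\cdot e^1\psi(e^2)\big)=f(m)$ by the Frobenius relation $e^1\psi(e^2)=1$ in \eqref{eq:FrobRel}. For $\alpha\beta$, the $B$-linearity of $g$ gives $g(m\cdot e^1)=g(m)\cdot e^1$, so $\alpha(\beta(g))(m)=\sum_i p_i\,\psi(b_ie^1)\otimes e^2$; the identity $\sum\psi(ae^1)e^2=a$, obtained in one line from the Casimir condition and $\psi(e^1)e^2=1$, then returns $\sum_i p_i\otimes b_i=g(m)$. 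Naturality in $M$ uses only that the comparison morphisms are $B$-linear (so they commute with $m\mapsto m\cdot e^1$), while naturality in $P$ is immediate from the formula for $\alpha$; both are routine.

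The substantive part of the argument is the colinearity of $\alpha(f)$ and of $\beta(g)$, as these are the two places where the FH-hypothesis is actually used: the first through the coinvariance of the Casimir element (Lemma \ref{lemma:CoinvCasimir}), the second through $\psi$ being a right integral. Everything else, including the $B$-linearity and the two inversion identities, relies only on the Frobenius system $(\psi,e)$. Once the natural bijection $\Hom{}{}{}{B}{U_B(M)}{P}\cong\Hom{}{}{B}{B}{M}{P\tildetens{} B}$ is established, the assertion that $U_B$ is Frobenius with left and right adjoint $-\tildetens{} B$ follows at once from the definition of a Frobenius pair.
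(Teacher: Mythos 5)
Your proof is correct and takes essentially the same route as the paper's: both hinge on the coinvariance of the Casimir element (Lemma~\ref{lemma:CoinvCasimir}) for colinearity in one direction, on the right-integral property of $\psi$ for the other, and on the standard free-forgetful adjunction $-\tildetens{}B\dashv U_B$ for the Frobenius conclusion. The only difference is organizational: the paper restricts the classical Frobenius-algebra bijection $\Homk(U(M),V)\cong\Hom{}{}{B}{}{M}{V\otimes B}$ by checking that its unit and counit are colinear, so that the mutual-inverse property and naturality come for free, whereas you verify the bijection, its inverse (via \eqref{eq:FrobRel} and the Casimir condition) and naturality directly.
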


\begin{proof}
Since $B$ is in particular a Frobenius algebra, we know that there exists a bijection
\begin{equation}\label{eq:FrobeniusAdj}
\begin{gathered}
\xymatrix @R=0pt{
\phi:\Hom{}{}{}{}{U(M)}{V} \ar@{<->}[r] & \Hom{}{}{B}{}{M}{V\otimes B} \\
f \ar@{|->}[r] & \left[ n \mapsto f(ne^1)\otimes e^2\right] \\
\left[ n \mapsto (V\otimes \psi)(g(n)) \right] & g \ar@{|->}[l]
}
\end{gathered}
\end{equation}
for every $M\in\M_B$ and $V\in \M$. 
Consider the unit $\eta_M:M\to U(M)\otimes B,\,m\mapsto me^1\otimes e^2$, and the counit $\epsilon_V:U(V\otimes B)\to V,\,v\otimes b\mapsto v\psi(b),$ of \eqref{eq:FrobeniusAdj}. In light of Lemma \ref{lemma:CoinvCasimir}, we have
\begin{gather*}
\delta_{U_B(M)\tildetens{} B}\left(\eta_M(m)\right) = \delta_{U_B(M)\tildetens{} B}\left(me^1\otimes e^2\right) = (me^1)_0\otimes e^2_1 \otimes (me^1)_1e^2_2 \\
= m_0e^1_1\otimes e^2_1 \otimes m_1e^1_2e^2_2 = m_0e^1\otimes e^2 \otimes m_1 = \eta_M(m_0)\otimes m_1,
\end{gather*}
so that $\eta_M\in \Hom{}{}{B}{B}{M}{U_B(M)\tildetens{} B}$, and since $\psi\in\int^rB^*$ we have
\[
(V\otimes \psi\otimes B)\left(\delta_{V\otimes B}(v\otimes b)\right) = v_0\psi(b_1)\otimes v_1b_2 = v_0\otimes v_1\psi(b_1)b_2 = v_0\otimes v_1\psi(b) = \delta_V\left(\epsilon_V(v\otimes b)\right),
\]
so that $\epsilon_V\in \Hom{}{}{}{B}{U_B(V\tildetens{} B)}{V}$. Therefore \eqref{eq:FrobeniusAdj} induces a bijection
\[
\Hom{}{}{}{B}{U_B(M)}{P} \cong \Hom{}{}{B}{B}{M}{P\tildetens{} B}
\]
for $M\in\M^B_B$ and $P\in\M^B$. Concerning the last claim, since $B$ is a monoid in the monoidal category of right $B$-comodules it is well-known that $-\tildetens{} B:\M^B\to \M^B_B$ is always left adjoint to $U_B:\M^B_B\to \M^B$.
\end{proof}

\begin{lemma}\label{lemma:CoinvFrobenius}
For every $M\in\M^B_B$ and every $V\in\M$ the assignment
\[
\phi_{M,V} : \Hom{}{}{}{B}{U_B(M)}{V^u} \to \Hom{}{}{}{}{\coinv{M}{B}}{V}: \quad f\mapsto \left[m\mapsto f\left(me^1\varepsilon(e^2)\right)\right]
\]
provides a bijection $\Hom{}{}{}{B}{U_B(M)}{V^u}\cong\Hom{}{}{}{}{\coinv{M}{B}}{V}$, natural in $M$ and $V$, with explicit inverse
\[
\varphi_{M,V} : \Hom{}{}{}{}{\coinv{M}{B}}{V} \to \Hom{}{}{}{B}{U_B(M)}{V^u} : \quad g \mapsto \left[ m \mapsto g\left(m_0S(m_1)\right)\psi(m_2)\right].
\]
\end{lemma}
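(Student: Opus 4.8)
The plan is to avoid a head-on Sweedler computation by realizing $\phi_{M,V}$ as a composite of three natural bijections that are already at our disposal, and only afterwards to match the resulting formula with the stated one. First I would observe that feeding the trivial comodule $V^u$ into the free Hopf module functor $-\tildetens{}B$ of Proposition \ref{prop:ForgetModFrobenius} produces exactly the free Hopf module on $V$: indeed $V^u\tildetens{}B=V\bartens{}B=V\otimes B$ with the structures \eqref{eq:TrivHopfMod}, which is the image of $V$ under $-\otimes B\colon\M\to\rhopf{B}$. Since $B$ is in particular a Frobenius algebra whose Frobenius morphism $\psi$ is a right integral, Proposition \ref{prop:ForgetModFrobenius} supplies a bijection, natural in $M$ and $V$,
\[
\Hom{}{}{}{B}{U_B(M)}{V^u}\cong\Hom{}{}{B}{B}{M}{V^u\tildetens{}B}=\Hom{}{}{B}{B}{M}{V\otimes B},\qquad f\mapsto\bigl[m\mapsto f(me^1)\otimes e^2\bigr].
\]

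Next I would pass through the left-most adjunction $\inv{(-)}{B}\dashv-\otimes B$ of the adjoint triple \eqref{eq:FundTriple}. Its unit and counit \eqref{eq:unitscounits} give the natural bijection
\[
\Hom{}{}{B}{B}{M}{V\otimes B}\cong\Homk\bigl(\inv{M}{B},V\bigr),\qquad g\mapsto\bigl[\cl{m}\mapsto(V\otimes\varepsilon)(g(m))\bigr].
\]
Finally, because $B$ is an FH-algebra it is a Hopf algebra, so by Theorem \ref{thm:sigmainv2} the canonical map $\sigma_M\colon\coinv{M}{B}\to\inv{M}{B}$, $m\mapsto\cl{m}$, is a natural isomorphism, and precomposition with it yields $\Homk(\inv{M}{B},V)\cong\Homk(\coinv{M}{B},V)$. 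Composing the three bijections and tracing a generic $f$ through them gives, on $\coinv{M}{B}$, the map $m\mapsto(V\otimes\varepsilon)\bigl(f(me^1)\otimes e^2\bigr)=f(me^1\varepsilon(e^2))$, which is exactly $\phi_{M,V}(f)$. Thus $\phi_{M,V}$ is a bijection, natural in $M$ and $V$, with no further effort.

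It then remains only to identify the inverse with $\varphi_{M,V}$. As $\phi_{M,V}$ is already bijective, it suffices to check a single composite, say $\phi_{M,V}\circ\varphi_{M,V}=\id$. For $g\in\Homk(\coinv{M}{B},V)$ and $m\in\coinv{M}{B}$ one has $\phi_{M,V}(\varphi_{M,V}(g))(m)=\varphi_{M,V}(g)(m\cdot t)$, where $t=e^1\varepsilon(e^2)$; applying the coaction to $m\cdot t$ (using that $m$ is coinvariant) and then the antipode axiom in the form $t_1S(t_2)\otimes t_3=1\otimes t$ collapses this to $g(m)\,\psi(t)$, and $\psi(t)=\psi(e^1)\varepsilon(e^2)=\varepsilon(\psi(e^1)e^2)=\varepsilon(1)=1$ by the Frobenius relation \eqref{eq:FrobRel}. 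This is a short and clean verification.

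The main thing to get right is the very first identification, namely that $V^u\tildetens{}B$ is the free Hopf module and that the three functors compose with the indicated variances; once this bookkeeping is in place the statement follows formally. I would flag this as the genuine hurdle precisely because it is what lets one sidestep the hard direction $\varphi_{M,V}\circ\phi_{M,V}=\id$: verified directly, that identity amounts to proving that a $B$-colinear map $f\colon U_B(M)\to V^u$ obeys the reproducing property $f(m)=\sum f\bigl(m_0S(m_1)\cdot t\bigr)\psi(m_2)$, and establishing this by hand forces one to combine the colinearity of $f$, the right-integral property of $\psi$, and the coinvariance of the Casimir element from Lemma \ref{lemma:CoinvCasimir} within one rather intricate Sweedler calculation. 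Routing through the already-proven adjunctions makes that calculation unnecessary.
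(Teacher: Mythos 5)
Your factorization of $\phi_{M,V}$ through three known natural bijections --- Proposition \ref{prop:ForgetModFrobenius} with $P=V^u$ (using $V^u\tildetens{}B=V\bartens{}B$), the adjunction bijection for $\inv{(-)}{B}\dashv-\otimes B$ from \eqref{eq:FundTriple}, and precomposition with the natural isomorphism $\sigma_M$ of Theorem \ref{thm:sigmainv2} (which applies because an FH-algebra is a Hopf algebra) --- is correct, and it is genuinely different from the paper's proof, which establishes well-definedness, both composites and naturality by direct Sweedler computations. The gap is in your last step. The deduction ``$\phi_{M,V}$ is bijective and $\phi_{M,V}\circ\varphi_{M,V}=\id$, hence $\varphi_{M,V}=\phi_{M,V}^{-1}$'' is valid only once you know that $\varphi_{M,V}$ actually takes values in $\Hom{}{}{}{B}{U_B(M)}{V^u}$, i.e.\ that $\varphi_{M,V}(g)$ is $B$-colinear (and, before that, that $m_0\cdot S(m_1)$ lies in $\coinv{M}{B}$ so that $g$ may be applied to it). You never check this, and it is not a formality: the defining formula of $\phi_{M,V}$ makes sense on \emph{every} $\K$-linear $f:M\to V$, namely $f\mapsto f\bigl((-)\cdot t\bigr)|_{\coinv{M}{B}}$ with $t=e^1\varepsilon(e^2)$, and this extended map only remembers the restriction of $f$ to the submodule $\coinv{M}{B}\cdot t\subseteq M$, so it is far from injective. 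Your computation therefore only shows that $\varphi_{M,V}(g)$ and the true inverse $\phi_{M,V}^{-1}(g)$ agree on $\coinv{M}{B}\cdot t$; without colinearity you cannot conclude they coincide on $M$. The colinearity of $\varphi_{M,V}(g)$ is precisely one of the nontrivial verifications in the paper's proof (where the right-integral property of $\psi$ enters), so as written you have not bypassed it --- you have silently assumed it.

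The repair stays entirely inside your framework and needs no computation: instead of verifying a composite, trace $\phi_{M,V}^{-1}$ through the explicit inverses of your three bijections, each of which is already known to land in the correct hom-set. Proposition \ref{prop:ForgetModFrobenius} gives $k\mapsto(V\otimes\psi)\circ k$; the adjunction gives $h\mapsto(h\otimes B)\circ\eta_M$ with $\eta_M$ as in \eqref{eq:unitscounits}; Theorem \ref{thm:sigmainv2} gives precomposition with $\sigma_M^{-1}$, $\cl{m}\mapsto m_0\cdot S(m_1)$. Their composite sends $g\in\Homk\bigl(\coinv{M}{B},V\bigr)$ to the map $m\mapsto(V\otimes\psi)\bigl(g(\sigma_M^{-1}(\cl{m_0}))\otimes m_1\bigr)=g\bigl(m_0\cdot S(m_1)\bigr)\psi(m_2)$, which is literally $\varphi_{M,V}(g)$; well-definedness (coinvariance of $m_0\cdot S(m_1)$ and colinearity of the result) is now automatic because each factor is a bijection onto the stated hom-set, and naturality of the inverse is automatic as well. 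With this replacement your argument is complete and, in fact, cleaner than the paper's direct verification.
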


\begin{proof}
Set $\phi:=\phi_{M,V}$ and $\varphi:=\varphi_{M,V}$ for the sake of brevity and recall that for a finitely generated and projective $\K$-bialgebra $B$, $\psi$ is a right integral on $B$ if and only if $(\psi\otimes B)\circ\Delta = \psi\otimes u$. Recall also that $e^1\varepsilon(e^2)=t$ is the left norm in $B$ with respect to $\psi$, whence we can rewrite $\phi(f)=t\cdot f$ and $\varphi(g) = (g\otimes \psi)\circ \theta^{-1}_M$ where $\theta$ is the one of \eqref{eq:unitscounits}. The first assignment is clearly well-defined. For what concerns the second one, the following computation
\[
(g\otimes \psi\otimes B) \circ (\theta_M^{-1}\otimes B)\circ \delta_M = (g\otimes \psi\otimes B) \circ (\coinv{M}{B}\otimes \Delta) \circ \theta_M^{-1} = (V\otimes u)\circ (g\otimes \psi) \circ \theta_M^{-1}
\]
proves that $\varphi(g)$ is colinear, so that $\varphi$ is well-defined. Let us prove that $\phi$ and $\varphi$ are each other inverses.
On the one hand, for all $m\in\coinv{M}{B}$ and $g\in \Hom{}{}{}{}{\coinv{M}{B}}{V}$ we have
\[
\phi(\varphi(g))(m) = \varphi(g)(mt) = g\left(m_0t_1S(m_1t_2)\right)\psi(m_2t_3) = g\left(mt_1S(t_2)\right)\psi(t_3) = g(m),
\]
which proves that $\phi\circ\varphi$ is the identity. On the other hand, recall from Lemma \ref{lemma:CoinvCasimir} that we have $e=e^1\otimes e^2\in\coinv{(B^\bullet \otimes B^\bullet)}{B}$, so that $t\otimes 1 = e^1_1\otimes e^1_2e^2$, and that $at=\varepsilon(a)t$ for all $a\in B$. For every $m\in M$ and $f\in \Hom{}{}{}{B}{U_B(M)}{V^u}$ compute
\begin{gather*}
\varphi(\phi(f))(m) = \phi(f)\left(m_0S(m_1)\right)\psi(m_2) = f\left(m_0S(m_1)t\right)\psi(m_2) = f(m_0t)\psi(m_1) \\
 = f\left(m_0e^1_1\right)\psi(m_1e^1_2e^2) \stackrel{(*)}{=} f\left(me^1\right)\psi(e^2) = f(m),
\end{gather*}
where $(*)$ follows from colinearity of $f$:
\begin{gather*}
f(m_0e^1_1)\psi(m_1e_2^1e^2) = \left(V\otimes (\psi\circ m_B)\right)\left( f\left((me^1)_0\right) \otimes (me^1)_1 \otimes e^2\right) \\
= \left(V\otimes (\psi\circ m_B)\right)\left( f\left(me^1\right) \otimes 1 \otimes e^2\right) = f\left(me^1\right)\psi(e^2).
\end{gather*}
Therefore $\varphi\circ\phi$ is the identity as well. We are left to check that $\phi_{M,V}$ is natural. To this aim, consider $\alpha:M'\to M$ in $\M_B^B$ and $\beta: V \to V'$ in $\M$. Then for every $f\in \Hom{}{}{}{B}{U_B(M)}{V^u}$ and $m\in M'$ we have
\[
\phi_{M',V'}(\beta\circ f \circ \alpha)(m) = \beta\left(f\left(\alpha\left(mt\right)\right)\right) = \beta\left(f\left(\alpha\left(m\right)t\right)\right) = \left(\beta\circ \phi_{M,V}(f)\circ \alpha\right)(m). \qedhere
\]
\end{proof}

Consider now the adjoint triple
\[
U^B \ \dashv \ -\cmptens{} B \ \dashv \ \Hom{}{}{B}{B}{B\cmptens{}B}{-}
\]
between $\rhopf{B}$ and $\Rmod{B}$, with units and counits
\begin{gather*}
\eta_M:M\to U^B(M)\cmptens{} B, \quad m\mapsto m_0\otimes m_1, \qquad \epsilon_N:U^B(N\cmptens{}B) \to N, \quad n\otimes b\mapsto n\varepsilon(b), \\
\gamma_N: N \to \Hom{}{}{B}{B}{B\cmptens{}B}{ N\cmptens{}B}, \quad n\mapsto [a\otimes b\mapsto n\cdot a\otimes b], \\
\theta_M: \Hom{}{}{B}{B}{B\cmptens{}B}{ M}\cmptens{}B \to M, \quad f\otimes a\mapsto f(1\otimes a).
\end{gather*}

\begin{proposition}\label{prop:UBFrob}
The assignment
\[
\Gamma: \Hom{}{}{B}{B}{B\cmptens{}B}{M} \to U^B(M): \quad f\mapsto f(e^1e_1^2\otimes e^2_2)=f(e^1\otimes 1)\cdot e^2
\]
is an isomorphism of right $B$-modules, natural in $M\in \M^B_B$, with inverse given by
\[
\Lambda: U^B(M) \to \Hom{}{}{B}{B}{B\cmptens{}B}{M}: \quad m\mapsto \big[a\otimes b\mapsto m_0\cdot S(m_1)\psi(m_2aS(b_1))b_2\big].
\]
In particular, the functor $-\cmptens{} B:\M_B\to\M_B^B$ is Frobenius with left and right adjoint the functor $U^B:\M^B_B\to\M_B$. Explicitly,
\[
\xymatrix @R=0pt{
\Hom{}{}{B}{B}{N\cmptens{}B}{M} \ar@{<->}[r] & \Hom{}{}{B}{}{N}{U^B(M)} \\
f \ar@{|->}[r] & \big[ n \mapsto f\left(n\cdot e^1\otimes 1\right)\cdot e^2\big] \\
\big[ n\otimes b \mapsto g(n)_0\cdot S\left(g(n)_1\right)\psi\left(g(n)_2S\left(b_1\right)\right)b_2\big] & g \ar@{|->}[l]
}
\]
\end{proposition}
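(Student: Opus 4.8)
The plan is to leverage the adjoint triple $U^B \dashv -\cmptens{}B \dashv \Hom{}{}{B}{B}{B\cmptens{}B}{-}$ that we already have between $\rhopf{B}$ and $\Rmod{B}$: since $-\cmptens{}B$ is left adjoint to $\Hom{}{}{B}{B}{B\cmptens{}B}{-}$, in order to show that $-\cmptens{}B$ is Frobenius with two-sided adjoint $U^B$ it suffices to exhibit a natural isomorphism of right $B$-modules $\Gamma\colon \Hom{}{}{B}{B}{B\cmptens{}B}{-}\Rightarrow U^B$, for then $U^B$ becomes a right adjoint of $-\cmptens{}B$ as well. The whole computation runs on the Frobenius datum of the FH-algebra $B$: the right integral $\psi\in\int_r\rdual{B}$ (so that $\psi(b_1)b_2=\psi(b)1$), the Casimir element $e=e^1\otimes e^2$ (so that $be^1\otimes e^2=e^1\otimes e^2 b$ and $e^1\psi(e^2)=\psi(e^1)e^2=1$), the antipode $S$ coming from the Hopf structure of $B$, and the coinvariance of $e$ from Lemma \ref{lemma:CoinvCasimir}. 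Applying $\psi\otimes\mathrm{id}$ and $\mathrm{id}\otimes\psi$ to the Casimir relation yields the two identities $\psi(be^1)e^2=b$ and $e^1\psi(e^2 b)=b$, valid for every $b\in B$, which I will use repeatedly.

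First I would check that $\Lambda(m)$ really defines a morphism in $\rhopf{B}$, so that $\Lambda$ is well defined. Right $B$-linearity is short: expanding $\Lambda(m)\big((a\otimes b)\cdot c\big)=\Lambda(m)(ac_1\otimes bc_2)$, the anti-multiplicativity of $S$ together with the antipode identity $c_1S(c_2)\otimes c_3=1\otimes c$ collapses the $c$-contributions to $\Lambda(m)(a\otimes b)\cdot c$. Right $B$-colinearity, that is $\delta_M\big(\Lambda(m)(a\otimes b)\big)=\Lambda(m)(a\otimes b_1)\otimes b_2$, is the technical heart of the argument: one writes $\Lambda(m)(a\otimes b)=\psi(m_2 aS(b_1))\,m_0\cdot\big(S(m_1)b_2\big)$, applies the module--comodule compatibility of $M$, and then reorganizes the resulting Sweedler expression using the anti-comultiplicativity of $S$, the antipode axioms and the right-integral property $\psi(x_1)x_2=\psi(x)1$ of $\psi$.

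Next I would verify that $\Gamma$ and $\Lambda$ are mutually inverse. For $\Gamma\circ\Lambda$, evaluating gives $\Gamma(\Lambda(m))=\Lambda(m)(e^1\otimes 1)\cdot e^2=m_0 S(m_1)\cdot\big(\psi(m_2 e^1)e^2\big)$; the identity $\psi(m_2 e^1)e^2=m_2$ turns this into $m_0 S(m_1)\cdot m_2$, which the antipode axiom collapses to $m$. For $\Lambda\circ\Gamma$, the point is that $f(e^1\otimes 1)$ is coinvariant, because $\delta(e^1\otimes 1)=e^1\otimes 1\otimes 1$ and $f$ is colinear; hence the coaction of $\Gamma(f)=f(e^1\otimes 1)\cdot e^2$ is carried entirely by $e^2$, and feeding $\Gamma(f)$ into $\Lambda$ and using $e^2_1 S(e^2_2)\otimes e^2_3=1\otimes e^2$ yields $\Lambda(\Gamma(f))(a\otimes b)=f\big(\psi(e^2 aS(b_1))\,e^1 b_2\otimes b_3\big)$. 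Applying $e^1\psi(e^2 y)=y$ with $y=aS(b_1)$ (the factor $b_2$ being untouched) and then $S(b_1)b_2\otimes b_3=1\otimes b$ identifies the argument of $f$ with $a\otimes b$, so $\Lambda(\Gamma(f))=f$.

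Finally, right $B$-linearity of $\Gamma$ is immediate from the Casimir relation: as the action on $\Hom{}{}{B}{B}{B\cmptens{}B}{M}$ is $(f\cdot c)(a\otimes b)=f(ca\otimes b)$, we get $\Gamma(f\cdot c)=f(ce^1\otimes 1)\cdot e^2=f(e^1\otimes 1)\cdot e^2 c=\Gamma(f)\cdot c$ using $ce^1\otimes e^2=e^1\otimes e^2 c$, and naturality in $M$ follows at once from $\K$-linearity and right $B$-linearity of the morphisms in $\rhopf{B}$. This proves that $\Gamma$ is a natural isomorphism $\Hom{}{}{B}{B}{B\cmptens{}B}{-}\cong U^B$, whence $U^B$ is both left and right adjoint to $-\cmptens{}B$ and the latter is Frobenius. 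The explicit bijection $\Hom{}{}{B}{B}{N\cmptens{}B}{M}\cong\Hom{}{}{B}{}{N}{U^B(M)}$ of the statement is then recovered by composing $\Gamma$ and $\Lambda$ with the unit--counit bijection of the adjunction $-\cmptens{}B\dashv\Hom{}{}{B}{B}{B\cmptens{}B}{-}$ determined by $\gamma_N$ and $\theta_M$. I expect the colinearity of $\Lambda(m)$ to be the main obstacle, being the one place where the right-integral property of $\psi$, the antipode axioms and the coinvariance of the Casimir element must all be combined in a single, somewhat delicate Sweedler computation.
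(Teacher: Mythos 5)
Your proposal is correct and takes essentially the same route as the paper: both prove that $\Gamma$ and $\Lambda$ are mutually inverse by direct Sweedler computations built on the Frobenius system $(\psi,e)$, the Casimir relation, the antipode axioms and the (co)linearity of the morphisms involved, and then deduce the Frobenius property of $-\cmptens{}B$ from the pre-existing adjoint triple $U^B \dashv -\cmptens{}B \dashv \Hom{}{}{B}{B}{B\cmptens{}B}{-}$. The only difference is that you spell out checks the paper leaves implicit (that $\Lambda(m)$ is indeed a morphism of Hopf modules, and the right $B$-linearity and naturality of $\Gamma$), which is a harmless, indeed welcome, addition.
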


\begin{proof}
We may compute directly
\begin{align*}
 & \Lambda(\Gamma(f))(a\otimes b) = \Lambda(f(e^1e_1^2\otimes e^2_2))(a\otimes b) \\
 & = f(e^1e_1^2\otimes e^2_2)_0\cdot S\left(f(e^1e_1^2\otimes e^2_2)_1\right)\psi(f(e^1e_1^2\otimes e^2_2)_2aS(b_1))b_2 \\
 & = f(e^1e_1^2\otimes e^2_2)\cdot S(e^2_3)\psi(e^2_4aS(b_1))b_2 = f(e^1\otimes 1)e_1^2S(e^2_2)\psi(e^2_3aS(b_1))b_2 \\
 & = f(e^1\otimes 1)\cdot \psi(e^2aS(b_1))b_2 = f(aS(b_1)e^1\otimes 1)\psi(e^2)b_2 \\
 & = f(aS(b_1)\otimes 1)\cdot b_2 = f(aS(b_1)b_2\otimes b_3) = f(a\otimes b)
\end{align*}
for all $f\in \Hom{}{}{B}{B}{B\cmptens{}B}{M}$ and $a,b\in B$ and
\begin{align*}
\Gamma(\Lambda(m)) & = \Lambda(m)(e^1\otimes 1)\cdot e^2  =m_0\cdot S(m_1)\psi\left(m_2e^1\right)e^2 = m_0\cdot S(m_1)\psi(e^1)e^2m_2 = m
\end{align*}
for all $m\in M$, so that both $\Gamma\circ\Lambda$ and $\Lambda \circ \Gamma$ are the identity morphism. For what concerns the explicit bijection giving that $-\cmptens{}B$ is also left adjoint to $U^B$, it can be easily deduced by using $\gamma$, $\theta$, $\Gamma$ and $\Lambda$.
\end{proof}

\begin{lemma}\label{lemma:ClFrobenius}
For every $M\in\rhopf{B}$ and $V\in\M$ the assignment
\[
\phi_{M,V}':\Hom{}{}{B}{}{V_\varepsilon}{U^B(M)} \to \Hom{}{}{}{}{V}{\cl{M}^B}: \quad f \mapsto \left[v\mapsto \cl{f(v)_0}\psi\left(f(v)_1\right)\right]
\]
provides a bijection $\Hom{}{}{B}{}{V_\varepsilon}{U^B(M)} \cong \Hom{}{}{}{}{V}{\cl{M}^B}$, natural in $M\in\rhopf{B}$ and $V\in\M$, with explicit inverse
\[
\varphi'_{M,V} : \Hom{}{}{}{}{V}{\cl{M}^B} \to \Hom{}{}{B}{}{V_\varepsilon}{U^B(M)}: \quad g\mapsto \left[g(v)'_0\cdot S\left(g(v)'_1\right)\varepsilon(e^1)e^2\right],
\]
where $g(v)'\in M$ is any element such that $g(v)=\cl{g(v)'}$.
\end{lemma}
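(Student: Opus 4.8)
The plan is to reduce the statement to an isomorphism at the level of underlying $\K$-modules and then to identify $\phi'_{M,V}$ and $\varphi'_{M,V}$ with post-composition by two mutually inverse maps. Since $V_\varepsilon$ carries the trivial action, a right $B$-linear map $f\colon V_\varepsilon\to U^B(M)$ is nothing but a $\K$-linear map $V\to M_B$, where $M_B:=\{m\in M\mid m\cdot b=\varepsilon(b)m\text{ for all }b\in B\}$; this identification $\Hom{}{}{B}{}{V_\varepsilon}{U^B(M)}\cong\Hom{}{}{}{}{V}{M_B}$ is natural in $M$ and $V$. Writing $T:=\varepsilon(e^1)e^2$ for the right norm associated with $\psi$, under this identification $\phi'_{M,V}$ and $\varphi'_{M,V}$ become post-composition with the $\K$-linear maps
\[
\Xi_M\colon M_B\to\cl{M}^B,\ \ m\mapsto\cl{m_0}\psi(m_1),\qquad\Theta_M\colon\cl{M}^B\to M_B,\ \ \cl{m}\mapsto m_0\cdot S(m_1)T.
\]
So I would first record this reduction, and it then suffices to prove that $\Xi_M$ and $\Theta_M$ are well defined, mutually inverse, and natural in $M$; the claimed bijection and its naturality follow at once.

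For well-definedness, $\Xi_M$ is plainly $\K$-linear with values in $\cl{M}^B$. For $\Theta_M$ I would check that $m\mapsto m_0\cdot S(m_1)T$ kills $MB^+$ and lands in $M_B$: the compatibility $\delta(m\cdot b)=m_0\cdot b_1\otimes m_1b_2$, anti-multiplicativity of $S$, and $b_1S(b_2)=\varepsilon(b)1$ give $(m\cdot b)_0\cdot S((m\cdot b)_1)=\varepsilon(b)\,m_0\cdot S(m_1)$, so the map vanishes on $MB^+$ and descends to $\cl{M}^B$; and $\Theta_M(\cl{m})\cdot b=m_0\cdot S(m_1)Tb=\varepsilon(b)\Theta_M(\cl{m})$ because $T$ is a right integral. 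The easy composite is $\Xi_M\circ\Theta_M=\id_{\cl{M}^B}$: since $m_0\cdot S(m_1)$ is coinvariant, the coaction of $\Theta_M(\cl{m})$ is $m_0\cdot S(m_1)T_1\otimes T_2$, whence $\Xi_M(\Theta_M(\cl{m}))=\cl{m_0\cdot S(m_1)T_1}\,\psi(T_2)=\cl{m_0\cdot S(m_1)}\,\psi(T)=\cl{m}$, using $\cl{x\cdot a}=\varepsilon(a)\cl{x}$, $\psi(T)=1$ and $\varepsilon\circ S=\varepsilon$.

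The reverse identity $\Theta_M\circ\Xi_M=\id_{M_B}$ is where the real work lies, and I expect it to be the main obstacle. Unwinding definitions gives $\Theta_M(\Xi_M(m))=m_0\cdot S(m_1)T\,\psi(m_2)$ for $m\in M_B$, which I would compare with the fundamental identity $m_0\cdot S(m_1)m_2=m$ (the equality $\theta_M\circ\theta_M^{-1}=\id$, valid since $B$ is a Hopf algebra). Two facts bridge these expressions: first, that $\sum m_0\cdot S(m_1)\otimes m_2$ lies in $M\otimes\int_r B$ for $m\in M_B$, which I would derive from the right $B$-linearity of $\theta_M^{-1}$; and second, the arithmetic fact that every right integral $x$ satisfies $x=\psi(x)T$. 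The latter is the crux of the proof: it rests on the Frobenius dual-basis identity $a=\psi(ae^1)e^2$ together with $x\cdot e^1=\varepsilon(e^1)x$, giving $x=\psi(xe^1)e^2=\varepsilon(e^1)\psi(x)e^2=\psi(x)T$. Granting this, $T\psi(x)=x$ for $x\in\int_r B$, so the inserted $T$ and $\psi$ cancel factorwise and $\Theta_M(\Xi_M(m))=m_0\cdot S(m_1)m_2=m$. Finally, naturality in $M$ and $V$ is immediate, since $\Xi$ and $\Theta$ are assembled from the coaction, the action, $S$, $\psi$ and the fixed elements $e$ and $T$, so I would only remark on it rather than verify it in detail.
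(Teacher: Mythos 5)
Your proposal is correct in substance and follows a genuinely different route from the paper's. The paper proves the lemma by two direct Hom-level computations: it checks $\varphi'_{M,V}\circ\phi'_{M,V}=\id$ and $\phi'_{M,V}\circ\varphi'_{M,V}=\id$ on elements, using only the Frobenius system (the Casimir property $ae=ea$ and the relations $\psi(e^1)e^2=1=e^1\psi(e^2)$) together with the antipode identities, and it explicitly leaves the well-definedness of $\varphi'_{M,V}$ to the reader. You instead identify $\Hom{}{}{B}{}{V_\varepsilon}{U^B(M)}$ with $\Hom{}{}{}{}{V}{M_B}$ and realize both assignments as post-composition with $\Xi_M\colon M_B\to\cl{M}^B$ and $\Theta_M\colon\cl{M}^B\to M_B$, so that the lemma reduces to a natural isomorphism $M_B\cong\cl{M}^B$. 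This is more structural: naturality becomes automatic, well-definedness is actually proved, and the two ingredients are cleanly separated, namely the Structure Theorem (legitimate here, since an FH-algebra is a Hopf algebra) for one composite, and the integral-theoretic identity $x=\psi(x)T$ on $\int_rB$ (equivalently, $\int_rB=\K T$) for the other. The paper's proof, by contrast, stays entirely inside the Frobenius calculus and never mentions invariants or integrals of $B$.

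There is one step you must patch. You deduce that $\xi:=\sum m_0\cdot S(m_1)\otimes m_2$ lies in $M\otimes\int_rB$ ``from the right $B$-linearity of $\theta_M^{-1}$''. Linearity only yields the invariance $\sum m_0\cdot S(m_1)\otimes m_2b=\varepsilon(b)\,\xi$ for all $b\in B$; passing from an invariant element of $M\otimes B$ to an element of $M\otimes\int_rB$ is a flatness statement (invariants are a kernel, and kernels need not commute with $M\otimes-$), and no flatness of $M$ or of $\coinv{M}{B}$ is available: $M$ is an arbitrary Hopf module, only $B$ is finitely generated projective. The fix costs nothing, because your dual-basis argument can be run on the tensor $\xi$ itself rather than on a single integral. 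Writing $\xi=\sum_i n_i\otimes x_i$ and $e=\sum_j e^1_j\otimes e^2_j$, the identity $a=\psi(ae^1)e^2$ applied in the second leg gives
\[
\xi=\sum_{i,j} n_i\otimes \psi\bigl(x_ie^1_j\bigr)e^2_j=\sum_{j}\bigl(M\otimes\psi\bigr)\Bigl(\sum_i n_i\otimes x_ie^1_j\Bigr)\otimes e^2_j=\sum_{j}\varepsilon\bigl(e^1_j\bigr)\bigl(M\otimes\psi\bigr)(\xi)\otimes e^2_j=(M\otimes\psi)(\xi)\otimes T,
\]
the third equality being your invariance applied with $b=e^1_j$. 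Applying the action $M\otimes B\to M$ to the two ends yields $m=m_0\cdot S(m_1)m_2=m_0\cdot S(m_1)\psi(m_2)T=\Theta_M(\Xi_M(m))$, which is exactly the missing composite; it also shows a posteriori that $\xi\in M\otimes\K T=M\otimes\int_rB$, so your claim was true, just not for the stated reason. With this replacement your proof is complete.
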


\begin{proof}
We leave to the reader to check that $\varphi'_{M,V}$ is well-defined. Notice that for every $f\in \Hom{}{}{B}{}{V_\varepsilon}{U^B(M)}$ we have $f(v)\cdot b=f(v)\varepsilon(b)$ and compute
\begin{gather*}
\varphi'_{M,V}\left(\phi'_{M,V}(f)\right)(v) = f(v)_0\cdot S(f(v)_1)\psi(f(v)_2)\varepsilon(e^1)e^2 = f(v)_0\cdot e^1_1S(f(v)_1e^1_2)\psi(f(v)_2e^1_3)e^2 \\
 = f(v)_0\cdot S(f(v)_1)\psi(f(v)_2e^1)e^2 = f(v)_0\cdot S(f(v)_1)\psi(e^1)e^2f(v)_2 \stackrel{\eqref{eq:FrobRel}}{=} f(v)
\end{gather*}
for every $v\in V$, whence $\varphi'_{M,V}\circ\phi'_{M,V}$ is the identity. The other way around, for every $g\in \Hom{}{}{}{}{V}{\cl{M}^B}$ we have
\begin{gather*}
\phi'_{M,V}\left(\varphi'_{M,V}(g)\right)(v) = \cl{g(v)'_0\cdot S\left(g(v)'_2\right)_1\varepsilon(e^1)e^2_1\psi\left(g(v)'_1S\left(g(v)'_2\right)_2e^2_2\right)} \\
 = \cl{g(v)'_0\cdot S\left(g(v)'_3\right)\varepsilon(e^1)e^2_1\psi\left(g(v)'_1S\left(g(v)'_2\right)e^2_2\right)}  = \cl{g(v)'_0\cdot S\left(g(v)'_1\right)\varepsilon(e^1)e^2_1\psi\left(e^2_2\right)} \\
 \stackrel{(*)}{=} \cl{g(v)'_0}\varepsilon\left(S\left(g(v)'_1\right)\right)\varepsilon(e^1)\varepsilon\left(e^2_1\right)\psi\left(e^2_2\right) = \cl{g(v)'}\varepsilon\left(e^1\psi\left(e^2\right)\right) \stackrel{\eqref{eq:FrobRel}}{=} g(v)
\end{gather*}
for all $v\in V$, where $(*)$ follows from the fact that $\cl{mb}=\cl{m}\varepsilon(b)$. Therefore, $\phi'_{M,V}\circ\varphi'_{M,V}$ is the identity as well.
\end{proof}

In conclusion, we have the following result.

\begin{theorem}\label{thm:summingup}
The following are equivalent for a finitely generated and projective $\K$-bialgebra $H$.
\begin{enumerate}[label=(\arabic*), ref=\emph{(\arabic*)}, leftmargin=0.8cm, labelsep=0.25cm]
\item\label{item:summingup1} The functor $-\bartens{}H:\M\to\rhopf{H}$ is Frobenius and $\int_r\rdual{H}\cong \K$.
\item\label{item:summingup2} $H$ is a Hopf algebra with $\int_r\rdual{H}\cong \K$.
\item\label{item:summingup3} $H$ is an FH-algebra.
\item\label{item:summingup4} The functor $-\tildetens{}H:\Rcomod{H}\hspace{-2pt}\to\rhopf{H}$ is Frobenius and $\mathsf{Hom}^{H}(U_H(M),V^u) \hspace{-1pt} \cong \hspace{-1pt} \mathsf{Hom}(\coinv{M}{H},V)$, naturally in $M\in \rhopf{H}, V\in\M$.
\item\label{item:summingup5} The functor $-\bartens{}H:\M\to\rhopf{H}$ is Frobenius and $\int_r{H}\cong \K$.
\item\label{item:summingup6} $\rdual{H}$ is a Hopf algebra with $\int_r{H^{**}}\cong \K$.
\item\label{item:summingup7} $H^*$ is an FH-algebra.
\item\label{item:summingup8} The functor $-\cmptens{}H:\Rmod{H}\to\rhopf{H}$ is Frobenius and $\mathsf{Hom}_H(V_\varepsilon,U^H(M)) \cong \mathsf{Hom}(V,\cl{M}^H)$, naturally in $M\in \rhopf{H}, V\in\M$.
\end{enumerate}
\end{theorem}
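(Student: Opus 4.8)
The plan is to isolate the six conditions \ref{item:summingup1}, \ref{item:summingup2}, \ref{item:summingup3}, \ref{item:summingup5}, \ref{item:summingup6} and \ref{item:summingup7} as a ``core'', prove these equivalent, and then hang the two genuinely functorial conditions \ref{item:summingup4} and \ref{item:summingup8} onto this core. For the core, I would first invoke the right-handed form of Pareigis' theorem recalled just before the statement: an FH-algebra is the same thing as a finitely generated projective Hopf algebra $H$ with $\int_r\rdual{H}\cong\K$, which gives at once \ref{item:summingup2} $\Leftrightarrow$ \ref{item:summingup3}; applying the very same equivalence to the finitely generated projective bialgebra $\rdual{H}$ (and using $H^{**}\cong H$) gives \ref{item:summingup6} $\Leftrightarrow$ \ref{item:summingup7}. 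The bridge \ref{item:summingup3} $\Leftrightarrow$ \ref{item:summingup7} is the self-duality of the FH property: by the self-dual package in that same remark, if $H$ is an FH-algebra then $\rdual{H}$ is again a finitely generated projective Hopf algebra which is Frobenius with $\int_r H\cong\K\cong\int_r\rdual{H}$, so being FH passes symmetrically between $H$ and $\rdual{H}$.

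Next I would treat \ref{item:summingup1} $\Leftrightarrow$ \ref{item:summingup2} and \ref{item:summingup5} $\Leftrightarrow$ \ref{item:summingup6}. In each case the integral hypothesis is common (after identifying $\int_r H^{**}\cong\int_r H$), so it remains to show that, for finitely generated projective $H$, the functor $-\bartens{}H$ is Frobenius precisely when $H$ is a Hopf algebra. If $H$ is Hopf then $-\bartens{}H$ is an equivalence by the Structure Theorem, hence admits a two-sided adjoint and is Frobenius; conversely, if $-\bartens{}H$ is Frobenius then Theorem \ref{thm:sigmainv2} makes $H$ a right Hopf algebra with anti-(co)multiplicative right antipode, and for finitely generated projective $H$ this forces $H$ to be a genuine Hopf algebra by \cite[Proposition 5]{GreenNicholsTaft}. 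Since a finitely generated projective bialgebra is Hopf if and only if its dual is, this yields both equivalences; I would also remark that for such an $H$ the two scalar conditions $\int_r H\cong\K$ and $\int_r\rdual{H}\cong\K$ coincide, each being equivalent through \cite{Pareigis} to the Frobeniusness of $\rdual{H}$, respectively of $H$, and $H$ being a Frobenius algebra if and only if $\rdual{H}$ is. Combined with the first paragraph, conditions \ref{item:summingup1}, \ref{item:summingup2}, \ref{item:summingup3}, \ref{item:summingup5}, \ref{item:summingup6} and \ref{item:summingup7} are now all equivalent.

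Finally I would attach \ref{item:summingup4} and \ref{item:summingup8}. The forward implications \ref{item:summingup3} $\Rightarrow$ \ref{item:summingup4} and \ref{item:summingup3} $\Rightarrow$ \ref{item:summingup8} are immediate from the explicit work already done: Proposition \ref{prop:ForgetModFrobenius} shows that $-\tildetens{}H$ is Frobenius and Lemma \ref{lemma:CoinvFrobenius} supplies the required isomorphism $\mathsf{Hom}^{H}(U_H(M),V^u)\cong\mathsf{Hom}(\coinv{M}{H},V)$ for an FH-algebra, which is exactly \ref{item:summingup4}, while Proposition \ref{prop:UBFrob} together with Lemma \ref{lemma:ClFrobenius} gives \ref{item:summingup8} in the same way. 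For the converses, I would feed the hypotheses of \ref{item:summingup4} into Proposition \ref{prop:ManyAdj}\ref{item:ManyAdj1} to obtain that $-\bartens{}H$ is Frobenius, whence $H$ is Hopf by the preceding paragraph; the missing scalar condition is then produced by reducing, via the Structure Theorem $\rhopf{H}\simeq\M$, the Frobenius functor $-\tildetens{}H$ to a (co)free comodule functor $\M\to\Rcomod{H}$, which under the isomorphism between $H$-comodules and $\rdual{H}$-modules corresponds to a free module functor over $\rdual{H}$; by Morita's theorem \cite{Morita} this is Frobenius exactly when $\rdual{H}$ is a Frobenius algebra, i.e.\ when $\int_r H\cong\K$, giving \ref{item:summingup5}. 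Symmetrically, \ref{item:summingup8} yields $-\bartens{}H$ Frobenius through Proposition \ref{prop:ManyAdj}\ref{item:ManyAdj2}, and reducing $-\cmptens{}H$ to the free $H$-module functor $\M\to\Rmod{H}$ forces $H$ itself to be a Frobenius algebra, i.e.\ $\int_r\rdual{H}\cong\K$, giving \ref{item:summingup1}.

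The main obstacle is precisely these two converse directions. The hypotheses of \ref{item:summingup4} and \ref{item:summingup8} are purely functorial and mention no integral, so the delicate point is to manufacture the scalar condition $\int_r H\cong\K$ (resp.\ $\int_r\rdual{H}\cong\K$) out of a Frobenius property; doing so cleanly requires first upgrading the given data to Hopfness of $H$ (via Proposition \ref{prop:ManyAdj} and \cite{GreenNicholsTaft}) so that the Structure Theorem becomes available, and then tracking the handedness of the integrals and the $H\leftrightarrow\rdual{H}$ duality through the Morita--Pareigis dictionary. The remaining forward implications, by contrast, are routine once the explicit adjunctions of Propositions \ref{prop:ForgetModFrobenius} and \ref{prop:UBFrob} and of Lemmas \ref{lemma:CoinvFrobenius} and \ref{lemma:ClFrobenius} are in hand.
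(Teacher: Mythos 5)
Your proposal reproduces the paper's treatment of most implications: the core equivalences rest, as in the paper, on Theorem~\ref{thm:sigmainv2} together with \cite[Proposition 5]{GreenNicholsTaft} and on (the right-handed analogue of) Pareigis' theorem, and the forward implications \ref{item:summingup3}~$\Rightarrow$~\ref{item:summingup4} and \ref{item:summingup3}~$\Rightarrow$~\ref{item:summingup8} are exactly Proposition~\ref{prop:ForgetModFrobenius} with Lemma~\ref{lemma:CoinvFrobenius}, resp.\ Proposition~\ref{prop:UBFrob} with Lemma~\ref{lemma:ClFrobenius}. Your reorganization of the core (using self-duality of the FH property in place of the paper's appeal to \cite[Proposition 4.3]{KadisonStolin} for \ref{item:summingup7}~$\Rightarrow$~\ref{item:summingup3}) is legitimate. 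Where you genuinely diverge from the paper is in the two converses out of \ref{item:summingup4} and \ref{item:summingup8}, and this is also where your argument has a gap. The paper closes these loops with a one-line observation that you miss: the assumed natural isomorphism, evaluated at the Hopf module $M=H$ and at $V=\K$, already yields the missing integral condition, because $\mathsf{Hom}^{H}(U_H(H),\K^u)$ is precisely $\int_r\rdual{H}$ (colinearity of $f:H\to\K^u$ reads $f(h_1)h_2=f(h)1$) while $\coinv{H}{H}\cong\K$, so \ref{item:summingup4} gives $\int_r\rdual{H}\cong \mathsf{Hom}(\coinv{H}{H},\K)\cong\K$; dually, $\mathsf{Hom}_H(\K_\varepsilon,U^H(H))=\int_r H$ and $\cl{H}^H\cong\K$ give $\int_r H\cong\K$ from \ref{item:summingup8}. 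No Morita theory and no second pass through Pareigis are needed.

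Your substitute route --- get Hopfness via Proposition~\ref{prop:ManyAdj}~\ref{item:ManyAdj1} (resp.~\ref{item:ManyAdj2}), conjugate $-\tildetens{}H$ (resp.\ $-\cmptens{}H$) by the equivalence $\rhopf{H}\simeq\M$ to identify it with the forgetful functor $\Rcomod{H}\to\M$ (resp.\ $\Rmod{H}\to\M$), and invoke \cite{Morita} to conclude that $\rdual{H}$ (resp.\ $H$) is a Frobenius algebra --- is workable, but the final step ``$\rdual{H}$ is a Frobenius algebra, i.e.\ $\int_r H\cong\K$'' hides a real claim. The form of Pareigis' theorem quoted in the paper equates freeness of the integrals with being Frobenius \emph{with Frobenius homomorphism an integral}; you need the stronger statement that for a finitely generated projective Hopf algebra, being Frobenius as an algebra, with no integrality hypothesis on the Frobenius homomorphism, already forces the integrals to be free of rank one. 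This is true (for instance: the Structure Theorem applied to the Hopf module $\rdual{A}$ gives $\rdual{A}\cong \int\otimes A$ as $A$-modules, so any module isomorphism $\rdual{A}\cong A$ yields $\int\cong\K$ after applying $-\otimes_A\K$; then transport along $A=\rdual{H}$ and $H^{**}\cong H$), but it must be proved or cited precisely, and the left/right bookkeeping you yourself flag has to be carried out; also note that your identification of $\coinv{(-)}{H}\circ(-\tildetens{}H)$ with the forgetful functor uses the antipode and so is only available after Hopfness is established. In short: correct skeleton, but your converses trade the paper's trivial evaluation trick for a detour that needs an unproved strengthening of Pareigis' result.
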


\begin{proof}
Let us show firstly that $\ref{item:summingup1}\Leftrightarrow\ref{item:summingup2}\Leftrightarrow\ref{item:summingup3}\Leftrightarrow\ref{item:summingup4}$. The implication from \ref{item:summingup1} to \ref{item:summingup2} follows from Theorem \ref{thm:sigmainv2} and \cite[Proposition 5]{GreenNicholsTaft}. The one from \ref{item:summingup2} to \ref{item:summingup3} is the right-handed analogue of Pareigis' \cite{Pareigis}. The fact that \ref{item:summingup3} implies \ref{item:summingup4} is the content of Proposition \ref{prop:ForgetModFrobenius} and Lemma \ref{lemma:CoinvFrobenius}. Finally, $\ref{item:summingup4}\Rightarrow\ref{item:summingup1}$ follows from Proposition \ref{prop:ManyAdj} \ref{item:ManyAdj1} and the observation that $\int_r\rdual{H}\cong \Hom{}{}{}{H}{U_H(H)}{\K^u}\cong \Hom{}{}{}{}{\coinv{H}{H}}{\K}\cong\K$.

Secondly, let us prove that $\ref{item:summingup3}\Leftrightarrow\ref{item:summingup5}\Leftrightarrow\ref{item:summingup6}\Leftrightarrow\ref{item:summingup7}\Leftrightarrow\ref{item:summingup8}$. If \ref{item:summingup5} holds then $H$ is a Hopf algebra and $\int_rH\cong \int_rH^{**}$, whence we have \ref{item:summingup6}. The implication from \ref{item:summingup6} to \ref{item:summingup7} is again Pareigis' result applied to $\rdual{H}$.  The one from \ref{item:summingup7} to \ref{item:summingup3} is the content of \cite[Proposition 4.3]{KadisonStolin}. The fact that \ref{item:summingup3} implies \ref{item:summingup8} follows from Proposition \ref{prop:UBFrob} and Lemma \ref{lemma:ClFrobenius} and, lastly, the implication from \ref{item:summingup8} to \ref{item:summingup5} is Proposition \ref{prop:ManyAdj} \ref{item:ManyAdj2} and the observation that $\int_r{H}\cong \Hom{}{}{H}{}{\K_\varepsilon}{U^H(H)}\cong \Hom{}{}{}{}{\K}{\cl{H}^H}\cong\K$.
\end{proof}



\begin{thebibliography}{99}

\bibitem{ArdiMeniniPepe}
A.~Ardizzoni, J.~G\'{o}mez-Torrecillas, C.~Menini, \emph{Monadic decompositions and classical Lie theory}. Appl.~Categ.~Structures \textbf{23} (2015), no.~1, 93-105. 

\bibitem{BulacuCaenepeel}
D.~Bulacu, S.~Caenepeel, \emph{Integrals for (dual) quasi-Hopf algebras. Applications}. J.~Algebra \textbf{266} (2003), no.~2, 552-583.

\bibitem{Heunen} 
C. Heunen, I. Contreras, A. S. Cattaneo, \emph{Relative Frobenius algebras are groupoids}. J. Pure Appl. Algebra 217 (2013), no. 1, 114-124.

\bibitem{CaenepeelMilitaruZhu}
S.~Caenepeel, G.~Militaru, S.~Zhu, \emph{Frobenius and separable functors for generalized module categories and nonlinear equations}. Lecture Notes in Mathematics, \textbf{1787}. Springer-Verlag, Berlin, 2002.

\bibitem{Doi}
Y.~Doi, \emph{Unifying Hopf modules}. J.~Algebra \textbf{153} (1992), no.~2, 373-385. 

\bibitem{GreenNicholsTaft} 
J.~A.~Green, W.~D.~Nichols, E.~J.~Taft, \emph{Left Hopf algebras}. J.~Algebra \textbf{65} (1980), no.~2, 399-411.

\bibitem{HausserNill}
F.~Hausser, F.~Nill, \emph{Integral theory for quasi-Hopf algebras}. Preprint. \href{https://arxiv.org/abs/math/9904164}{arXiv:math/9904164v2}.

\bibitem{IovanovKadison}
M.~C.~Iovanov, L.~Kadison, \emph{When weak Hopf algebras are Frobenius}. Proc. Amer. Math. Soc. \textbf{138} (2010), no. 3, 837-845.

\bibitem{IyerTaft}
U.~N.~Iyer, E.~J.~Taft, \emph{The dual of a certain left quantum group}. J.~Algebra Appl.~\textbf{15} (2016), no.~4.

\bibitem{KadisonStolin}
L.~Kadison, A.~A.~Stolin, \emph{An approach to Hopf algebras via Frobenius coordinates}. Beitr\"age Algebra Geom. \textbf{42} (2001), no. 2, 359-384.

\bibitem{LarsonSweedler} 
R.~G.~Larson, M.~E.~Sweedler, \emph{An Orthogonal Bilinear Form for Hopf Algebras}. Amer.~J.~Math. \textbf{91} 1969, 75-94. 

\bibitem{LauveTaft}
A. Lauve, E. J. Taft, \emph{A class of left quantum groups modeled after $SL_q(n)$}. J. Pure Appl. Algebra \textbf{208} (2007), no. 3, 797-803.

\bibitem{Morita}
K.~Morita, \emph{Adjoint pairs of functors and Frobenius extensions}. Sci.~Rep.~Tokyo Kyoiku Daigaku Sect.~A \textbf{9} (1965), 40-71.

\bibitem{SepFunctors}
C.~N\u{a}st\u{a}sescu, M.~Van den Bergh, F.~Van Oystaeyen, \emph{Separable functors applied to graded rings}. J.~Algebra \textbf{123} (1989), no. 2, 397-413.

\bibitem{NicholsTaft}
W.~D.~Nichols, E.~J.~Taft, \emph{The left antipodes of a left Hopf algebra}. Algebraists' homage: papers in ring theory and related topics (New Haven, Conn., 1981), pp. 363-368, Contemp.~Math., \textbf{13}, Amer.~Math.~Soc., Providence, R.I., 1982.

\bibitem{Pareigis-cohomology}
B.~Pareigis, \emph{On the cohomology of modules over Hopf algebras}. J.~Algebra \textbf{22} (1972), 161-182.

\bibitem{Pareigis}
B.~Pareigis, \emph{When Hopf algebras are Frobenius algebras}. J. Algebra \textbf{18} (1971), 588-596.

\bibitem{Radford}
D.~E.~Radford, \emph{Hopf algebras}. Series on Knots and Everything, \textbf{49}. World Scientific Publishing Co.~Pte.~Ltd., Hackensack, NJ, 2012.

\bibitem{Rafael}
M.~D.~Rafael, \emph{Separable functors revisited}. Comm.~Algebra \textbf{18} (1990), no.~5, 1445-1459.

\bibitem{RomoTaft1}
S.~Rodr\'iguez-Romo, E.~J.~Taft, \emph{A left quantum group}. J.~Algebra \textbf{286} (2005), no.~1, 154-160. 

\bibitem{RomoTaft2}
S.~Rodr\'iguez-Romo, E.~J.~Taft, \emph{One-sided Hopf algebras}. Algebraic structures and their representations, 377-384, Contemp.~Math., 376, Amer.~Math.~Soc., Providence, RI, 2005.

\bibitem{Saracco-PreFrob}
P.~Saracco. \emph{Antipodes, preantipodes and Frobenius functors}. J. Algebra Appl. (2020). \href{https://doi.org/10.1142/S0219498821501243}{https://doi.org/10.1142/S0219498821501243}

\bibitem{PhD}
P. Saracco, \emph{Hopf Structures and Duality}. PhD Thesis, Universit\`a di Torino (2018). \href{http://hdl.handle.net/2318/1664506}{http://hdl.handle.net/2318/1664506}

\bibitem{Saracco}
P.~Saracco, \emph{On the Structure Theorem for Quasi-Hopf Bimodules}.  Appl.~Categ.~Structures \textbf{25} (2017), no. 1, 3-28.

\bibitem{Schauenburg-dualsdoubles}
P.~Schauenburg, \emph{Duals and doubles of quantum groupoids ($\times_R$-Hopf algebras)}. New trends in Hopf algebra theory (La Falda, 1999), 273-299, Contemp. Math., \textbf{267}, Amer. Math. Soc., Providence, RI, 2000.

\bibitem{Street}
R.~Street, \emph{Frobenius monads and pseudomonoids}. J. Math. Phys. \textbf{45} (2004), no. 10, 3930-3948.

\bibitem{Hopfish}
X.~Tang, A.~Weinstein, C.~Zhu, \emph{Hopfish algebras}. Pacific J.~Math.~\textbf{231} (2007), no.~1, 193-216.

\end{thebibliography}
\end{document}